\newcolumntype{L}[1]{>{\raggedright\let\newline\\\arraybackslash\hspace{0pt}}m{#1}}
\newcolumntype{C}[1]{>{\centering\let\newline\\\arraybackslash\hspace{0pt}}m{#1}}
\newcolumntype{R}[1]{>{\raggedleft\let\newline\\\arraybackslash\hspace{0pt}}m{#1}}
\newtheorem{thm}{Theorem}
\newtheorem{prop}[thm]{Proposition}
\newtheorem{lemma}[thm]{Lemma}
\newtheorem{conj}[thm]{Conjecture}
\newtheorem*{thm*}{Theorem}
\newtheorem*{conj*}{Conjecture}
\newtheorem*{qn*}{Question}
\newtheorem{problem}[thm]{Problem}
\theoremstyle{definition}
\newtheorem{remark}{Remark}
\newtheorem*{defi}{Definition}
\newcolumntype{D}{ >{\centering\arraybackslash} m{0.7cm} }
\newcolumntype{I}{ >{\centering\arraybackslash} m{4cm} }
\newcolumntype{E}{ >{\centering\arraybackslash} m{2.5cm} }
\newcolumntype{F}{ >{\centering\arraybackslash} m{0.7cm} }
\newcolumntype{G}{ >{\centering\arraybackslash} m{5cm} }
\newcolumntype{J}{ >{\centering\arraybackslash} m{3.5cm} }
\title{Variants of the Selberg sieve, and almost prime $k$-tuples}
\author{Pawe\l  ~Lewulis\thanks{Supported by NCN Sonatina 3, 2019/32/C/ST1/00341.}}
\date{}
\begin{document}

\maketitle


\selectlanguage{english}

\begin{abstract} Let $k\geq 2$ and $\mathcal{P} (n) = (A_1 n + B_1 ) \cdots (A_k n + B_k)$ where all the $A_i, B_i$ are integers. Suppose that $\mathcal{P} (n)$ has no fixed prime divisors. For each choice of $k$ it is known that there exists an integer $\varrho_k$ such that $\mathcal{P} (n)$ has at most $\varrho_k$ prime factors infinitely often. We used a new weighted sieve set-up combined with a device called an $\varepsilon$-trick to improve the possible values of $\varrho_k$ for $k\geq 7$. As a by-product of our approach, we improve the conditional possible values of $\varrho_k$ for $k\geq 4$, assuming the generalized Elliott--Halberstam conjecture. 
\end{abstract}

\section{Introduction}
\subsection*{State of the art}

Let us begin with recalling the following notion. 

\begin{defi}[Admissible tuples] Fix a positive integer $k$. For each $i=1, \dots , k$ fix integers $A_i$, $B_i$, such that $A_i >0$, and let $L_i \colon \mathbf{Z}^+ \rightarrow \mathbf{Z}$ be a function given by the formula $ L_i (n) := A_i n + B_i$. For each positive integer $n$ put
\[ \mathcal{P} (n) := \prod_{i=1}^k L_i (n). \]
We call $\mathcal{H}:= \{L_1 , \dots , L_k \}$ an \textit{admissible $k$--tuple}, if for every prime $p$ there is an integer $n_p$ such that none of the $L_i (n_p)$ is a multiple of $p$.
\end{defi}

We are interested in the following problem being a vast generalization of the twin primes conjecture.

\begin{conj}[Dickson--Hardy--Littlewood]\label{DHL}
Fix a positive integer $k$. Let $\{ L_1, \dots , L_k \}$ be an admissible $k$--tuple. Then,
\begin{equation}
\liminf_{n \rightarrow \infty }  \Omega ( \mathcal{P}(n) ) = k.
\end{equation}
\end{conj}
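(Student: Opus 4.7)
The inequality $\liminf_{n\to\infty} \Omega(\mathcal{P}(n)) \geq k$ is immediate: for $n$ large enough that every $L_i(n)\geq 2$, complete additivity of $\Omega$ gives $\Omega(\mathcal{P}(n)) = \sum_{i=1}^k \Omega(L_i(n)) \geq k$. The whole content of Conjecture~\ref{DHL} therefore lies in the matching upper bound $\liminf \leq k$, which is equivalent to the assertion that $L_1(n),\dots,L_k(n)$ are simultaneously prime for infinitely many $n$. Specialising to $k=2$, $L_1(n)=n$, $L_2(n)=n+2$ already recovers the twin prime conjecture, so any strategy I propose must be at least strong enough to settle that.

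\paragraph{Strategy for the upper bound.} I would attack the remaining inequality through a weighted sieve of Maynard--Tao type. For $N$ large one chooses non-negative weights $w_n$ supported on $n\in[N,2N]$ of the shape
\[
w_n = \Bigl(\sum_{\mathbf{d}} \lambda_{\mathbf{d}}\,\mathbf{1}_{d_i \mid L_i(n)\ \forall i}\Bigr)^2,
\]
with $\lambda_{\mathbf{d}}$ supported on $\prod_i d_i \leq N^{\theta/2}$, where $\theta$ is a level of distribution admissible for the forms $L_i$ in arithmetic progressions (given by Bombieri--Vinogradov unconditionally, or by GEH conditionally). If one can arrange
\[
\sum_{n\in[N,2N]} \Bigl(\sum_{i=1}^k \mathbf{1}_{L_i(n)\text{ prime}} - (k-1)\Bigr) w_n \;>\; 0,
\]
then some $n$ forces all $L_i(n)$ prime, and letting $N\to\infty$ yields infinitely many such $n$. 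Computing the main terms for primes and singletons via the admissibility of $\mathcal{H}$ reduces the question to a variational problem for a symmetric test function on the simplex $\{x_1+\dots+x_k\leq 1\}$, of exactly the kind optimised in the rest of this paper.

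\paragraph{The decisive obstacle.} The parity problem of Selberg is fatal at this point. An upper bound sieve on an integer set cannot distinguish between almost-primes with an even and an odd number of prime factors, so the ratio appearing in the variational problem is bounded away from $k$ by an absolute quantitative amount regardless of $\theta$. Even granting GEH (equivalently, $\theta$ arbitrarily close to $1$), the Maynard--Tao machinery only delivers
\[
\liminf_{n\to\infty}\sum_{i=1}^k \mathbf{1}_{L_i(n)\text{ prime}} \;\geq\; m \qquad \text{with } m = O(\log k),
\]
which falls short of the required $m=k$ by an arbitrarily large gap as $k$ grows. I therefore do not expect the full Dickson--Hardy--Littlewood conjecture to be reachable by the sieve-theoretic techniques of this paper, or by any currently available method: a genuine breach of the parity barrier, or an arithmetic input capable of detecting individual prime factors, would be required. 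Accordingly, rather than attempting Conjecture~\ref{DHL} outright, the paper targets the strongest feasible approximation, namely explicit upper bounds $\Omega(\mathcal{P}(n)) \leq \varrho_k$ infinitely often, improved unconditionally for $k\geq 7$ and conditionally for $k\geq 4$ under GEH.
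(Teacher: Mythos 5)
This statement is labeled a Conjecture in the paper and is given no proof there: it is the open Dickson--Hardy--Littlewood problem motivating the weaker $DHL_\Omega[k;\varrho_k]$ results the paper actually establishes. Your assessment matches the paper's treatment exactly -- the lower bound $\liminf\Omega(\mathcal{P}(n))\geq k$ is indeed trivial, and your identification of the parity barrier as the reason the full conjecture is out of reach for the sieve machinery agrees with the paper's own remarks (cf.\ its discussion of the constant $3$ for $k=2$), so there is nothing to fault beyond noting that no proof was expected here.
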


One may reformulate the statement above into a general question about the total number of prime factors contained within $\mathcal{P} (n)$. This creates a way to `measure' of how far we are from proving Conjecture \ref{DHL}.

\begin{problem}[$DHL_{\Omega}$] 
Fix positive integers $k$ and $\varrho_k \geq k$. Let $\{ L_1, \dots , L_k \}$ be an admissible $k$--tuple. The task is to prove that
\begin{equation}\label{twierdzenie_zasadnicze}
\liminf_{n \rightarrow \infty}  \Omega ( \mathcal{P} (n)) \leq \varrho_k.
\end{equation}
\end{problem}
From this point on, if inequality (\ref{twierdzenie_zasadnicze}) is true for some precise choice of $k$ and $\varrho_k$, then we say that $DHL_\Omega [k;\varrho_k]$ holds. In the case $k=1$, the classical Dirichlet's theorem is equivalent to $DHL_\Omega [1;1]$. This is also the only instance where the optimal possible value of $\varrho_k$ is already known. For $k = 2$ we have $DHL_\Omega [2;3]$ by Chen's theorem proven in \cite{Chen}. If $k\geq 3$, then the state of the art and recent progress are described below.
\begin{center}
\centering
\text{Table A. State of the art -- obtained values $\varrho_k$ for which $DHL_\Omega [k;\varrho_k]$ holds.} \\[1.2ex]
\text{Unconditional case.}
\vspace{1.2mm}
\\
\renewcommand{\arraystretch}{1}
  \begin{tabular}{ | G || F | F | F | F | F | F | F | F |@{}m{0cm}@{}}\hline    
  $k$ &   3 &  4  &  5 & 6 & 7 & 8 & 9 & 10 &  \rule{0pt}{4ex} \\ \hline 
  Halberstam, Richert \cite{Halberstam-Richert} &  10  & 15   & 19   &  24   & 29   & 34   & 39  & 45  &   \rule{0pt}{4ex}  \\
    Porter \cite{Porter} &  8  &    &   &    &    &    &     &   &   \rule{0pt}{4ex} \\
    Diamond, Halberstam  \cite{DH} &    &  12 & 16 & 20    & 25   & 29 & 34 & 39 & \rule{0pt}{4ex} \\
    Ho, Tsang  \cite{HT} &   & & & & 24 & 28 & 33  & 38  & \rule{0pt}{4ex} \\
    Maynard \cite{3-tuples , MaynardK}   & 7 & 11 & 15 & 18 & 22 & 26 & 30 & 34 & \rule{0pt}{4ex} \\
    Lewulis \cite{Lewulis}  &   &   & 14 & & & & & & \rule{0pt}{4ex} \\
     \textbf{This work} &     &   & \textbf{}  &  \textbf{}  &  \textbf{21}  &  \textbf{25}  &  
    \textbf{29}   &  \textbf{33}  & \rule{0pt}{4ex} \\
      \hline
  \end{tabular}
\end{center}
\vspace{-1mm}
\begin{center} 
\centering
\text{The $GEH$ case.}
\vspace{1.2mm}
\\
\renewcommand{\arraystretch}{1}
  \begin{tabular}{ | G || F | F | F | F | F | F | F | F |@{}m{0cm}@{}}\hline
    $k$ &  3  &  4  &  5 & 6 & 7 & 8 & 9 & 10 &  \rule{0pt}{4ex}  \\ \hline 
        Sono \cite{Sono} &  6  & &  &   &  &     &   &   &  \rule{0pt}{4ex}  \\
    Lewulis \cite{Lewulis} &    & 10 & 13  &  17  & 20  &  24   &  28  &  32 &  \rule{0pt}{4ex}  \\
    \textbf{This work} &   &  \textbf{8}  & \textbf{11}  &  \textbf{14}  &  \textbf{17}  &  \textbf{21}  &  
    \textbf{24}   &  \textbf{27}  &  \rule{0pt}{4ex}  \\
    \hline
  \end{tabular}
\end{center} 

\subsection*{Notation}

The letter $p$ with possible indices always denotes a prime number and ${\log}$ denotes the natural logarithm. We use the notation $\mathbf{N}=\{1,2,3,\dots \}$. We also use the following definitions listed below:

\begin{itemize}
\item $\varphi (n) := \# \left( \mathbf{Z} / n \mathbf{Z} \right)^\times $ denotes Euler totient function; 
\item $\tau (n)  := \sum_{d|n} 1 $ denotes the divisor function; 
\item $\Omega (n)$ denotes the number of prime factors of $n$; 
\item $\pi (x) := \# \left\{ n \in \mathbf{N}: n \leq x, ~n \text{ is prime} \right\}$;
\item $\pi (x;q,a) := \# \left\{ n \in \mathbf{N}: n \leq x,~n \equiv a \bmod q, ~n \text{ is prime} \right\}$;
\item $\log_y x := \frac{\log x}{\log y}$ for $x,y>0$ and $y \not=1$;
\item By $(a,b)$ and $[a,b]$ we denote the greatest common divisor and the lowest common multiple, respectively;
\item For a logical formula $\phi$ we define the indicator function $\mathbf{1}_{\phi (x)}$ that equals $1$ when $\phi (x)$ is true and $0$ otherwise;
\item For a set $A$ we define the indicator function $\mathbf{1}_{A}$ that equals $1$ when the argument belongs to $A$ and $0$ otherwise;
\item By $\text{gpf}(n)$ and $\text{lpf}(n)$ we denote the greatest and the lowest prime divisor of $n$ respetively;
\item The condition $n \sim x$ means that $x<n\leq 2x$;
\item For a function $F$ being a map between some two abelian groups we define the difference operator $\partial_y F(x) := F(x+y)-F(x)$; 
\item We define an analogous operator for a function $F$ with $m$ variables, namely \\ 
$\partial_y^{(i)} F(x_1, \dots , x_m) := F(x_1, \dots, x_{i-1}, x_i + y, x_{i+1}, \dots , x_m) - F(x_1, \dots , x_m)$; 
\item  For every compactly supported function $F \colon [0,+ \infty ) \rightarrow \mathbf{R}$ we define
\[S( F) := \sup \left( \{ x \in \mathbf{R} \colon F(x) \not= 0 \} \cup \{ 0 \} \right);\]
\item We define a normalizing expression $B:=\frac{\varphi (W) \log x}{W}$ (cf. next subsection);
\item Symmetric polynomials of degree $m$ and $k$ variables $\sum_{j=1}^k t_j^m$ are denoted as $P_m$;
\item For a finitely supported arithmetic function $f \colon \mathbf{N} \rightarrow \mathbf{C}$ we define a discrepancy
\[ \Delta \left( f; a \bmod q \right) := \sum_{n\equiv a \bmod q} f(n) - \frac{1}{\varphi (q)} \sum_{(n,q)=1} f(n) \, ;  \]
\item For any $f \colon \mathbf{R} \rightarrow \mathbf{R}$ we define a function related to Selberg weights
\[ \lambda_f (n) := \sum_{d | n } \mu (d) f (\log_x d); \]
\item We also make use of the `big $O$', the `small $o$', and the `$\ll$'  notation in a standard way. 
\end{itemize}

\subsection*{The general set-up}

 Let us fix $k \in \mathbf{Z}^+$ and consider the expression
\begin{equation}\label{1.1}
\mathcal{S} := \sum_{ \substack{ n \sim x \\ n \equiv b \bmod W}} \left( \varrho_k - \Omega ( \mathcal{P} (n)) \right)  \nu (n), 
\end{equation}
where $\nu $ is some arbitrarily chosen sequence of non-negative weights. Put
  \[W := \prod_{p<D_0} p\] 
for $D_0 := \log \log \log x$ and take some integer $b$ coprime to $W$. We choose some residue class $b$ such that $\mathcal{P} (b)$ is coprime to $W$ and then, we restrict our attention to $n \equiv b \bmod W$. This way we discard all irregularities caused by very small prime numbers. Put $A:= 4\max \left\{ |A_1|, |B_1| , \dots , |A_k|,|B_k| \right\}$. Assume that $x> 10 \, 000$ and $D_0>A$. Thus, our goal is to show that
\begin{equation}\label{S=}
\mathcal{S} = \varrho_k \mathcal{S}_0 - \mathcal{S}_\Omega > 0,
\end{equation}
where
\begin{equation}\label{S012}
\begin{split}
\mathcal{S}_0 &:=  \sum_{ \substack{ n \sim x \\ n \equiv b \bmod W}}   \nu (n), \\
\mathcal{S}_\Omega &:=  \sum_{ \substack{ n \sim x \\ n \equiv b \bmod W}}   \Omega( \mathcal{P} (n))  \nu (n). \\
\end{split}
\end{equation}
The main difficulty is to calculate $\mathcal{S}_\Omega$ with sufficient accuracy. One possible method and a good source of inspiration for new tools is the following identity valid for square-free $n\leqslant x$: 
\begin{equation}\label{rozbicie omega}
\Omega (n) = \sum_{p|n} 1 = \mathbf{1}_{\text{gpf}(n) > U}  + \sum_{\substack{ p|n \\ p \leq U }}1,
\end{equation}
where $U > x^{1/2}$ (usually, $U=x^{1/2+\epsilon}$ for some small $\epsilon>0$ has been considered). For instance, one can exploit the simple inequality
 \begin{equation}\label{1..9}
\Omega ( \mathcal{P} (n) )  ~=~  \sum_{i=1}^k  \mathbf{1}_{\text{gpf}(L_i (n) ) > U}  ~+ \sum_{\substack{ p| \mathcal{P} (n) \\ p \leq U }}1
 ~\leq ~  k ~+ \sum_{\substack{ p| \mathcal{P} (n) \\ p \leq U }}1
\end{equation}
under the previous assumptions.  This reasoning leads to results that are nontrivial, but weaker than already existing in literature. However, the interesting observation about this identity is that one does not need to rely on any distributional claims about primes in arithmetic progressions in order to exploit it.
 
In \cite{MaynardK} and \cite{Lewulis} the authors applied the following identity valid for all square-free $n \sim x$: 
\begin{equation}\label{identity na sumy}
\Omega (n) =  \frac{ \log n}{\log T}  +  \sum_{p|n} \left( 1 - \frac{\log  p}{\log T} \right)  ,
\end{equation}
where $T:= x^l$ for some exponent $l \in (0,1]$. This approach combined with (\ref{rozbicie omega}) gives some flexibility, because the expression in the parentheses in (\ref{identity na sumy}) is negative for $p>T$. In such case, we can transform the task of seeking for upper bounds for $\mathcal{S}_\Omega$ into problem of establishing lower bounds. The idea was to apply the following partition of unity:
\begin{equation}\label{part unity}
  1 = \sum_{r} \mathbf{1}_{\Omega (n)=r} \geq  \sum_{r \leq H} \mathbf{1}_{\Omega (n)=r},
\end{equation}
valid for any $H>0$, and then, to calculate the contribution of $S_\Omega$ via (\ref{identity na sumy}) and (\ref{part unity}), usually for $H=3,4$, depending on specific cases.

  In this work we propose a different approach and we establish the asymptotic behaviour of $\mathcal{S}_\Omega$. Such a result is sufficient to improve the currently known values of $\varrho_k$ in the conditional case, that is when $GEH$ (cf. Section `Preparing the sieve' for definitions) is true. It also greatly simplifies the unconditional results from \cite{Lewulis} and explains Conjecture 4.2 formulated there, which turns out to be slightly incorrect. 
  
  To tackle the uncondtitional case, we need to expand the sieve support beyond the domain offered by the standard claims regarding primes in arithemetic progressions (Theorem \ref{GBV}, in particular). Hence, we incorporate a device invented in \cite{Polymath8} called an $\varepsilon$-trick. In order to do so, we have to apply (\ref{identity na sumy}). The reason for this is that the $\varepsilon$-trick is all about bounding the sieve weights from below. In the same time, we wish to apply this tool to $\mathcal{S}_\Omega$, which has to be estimated from above. As we noticed before, (\ref{identity na sumy}) enables us to partially convert upper bounds into lower bounds, at least until the prime factors are sufficiently large. On the other hand, if they are small, we do not need to rely on any distributional claim on primes in arithmetic progressions at all, so in this case we can expand the sieve support almost freely.
  
  To summarize, we propose a general set-up that is flexible enough to cover all applications appearing in this work. We have the following criterion for our main problem. 
\begin{lemma}\label{criterion}
Let $k \geq 2$ and $\varrho \geq k$ be fixed integers. Suppose that for each fixed admissible $k$--tuple $\{ L_1, \dots , L_k \}$ and each residue class $b \bmod W$ such that $(L_i (b) , W)=1$ for all $i = 1, \dots , k$, one can find a non-negative weight function $\nu \colon \mathbf{N} \rightarrow \mathbf{R}^+$ and fixed quantities $\alpha>0$, and $\beta_1, \dots  \beta_k \geq 0$, such that one has the asymptotic lower bound
\begin{equation}\label{key upper bound}
 \sum_{ \substack{ n \sim x \\ n \equiv b \bmod W }}   \nu (n) \geq \left( \alpha - o(1) \right) B^{-k} \frac{x}{W},
 \end{equation}
and the asymptotic upper bounds
\begin{align}\label{key lower bounds}
  \sum_{ \substack{ n \sim x \\ n \equiv b \bmod W \\ \mathcal{P}(n) \textup{ sq-free} }}   
  \sum_{p | L_i (n) } \left( 1 - \ell \log_x p \right)    \nu (n) &\leq (\beta_i + o(1)) B^{-k} \frac{x}{W}, \\
   \sum_{ \substack{ n \sim x \\ n \equiv b \bmod W \\ \mathcal{P}(n) \textup{ not sq-free} }} \tau ( \mathcal{P} (n))  
   \left|  \nu (n) \right| &\leq  o(1) \times B^{-k} \frac{x}{W} 
\end{align}
for all $i = 1, \dots , k$, and the key inequality
\[ \varrho > \frac{\beta_1 + \dots + \beta_k}{\alpha} + \ell k. \]
Then, $DHL_\Omega [k; \varrho] $ holds. Moreover, if one replaces inequalities (\ref{key upper bound}--\ref{key lower bounds}) with equalities, then the right-hand side of the key inequality above is constant with respect to the $\ell$ variable. 
\end{lemma}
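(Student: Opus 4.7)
The plan is to show that the sum $\mathcal{S}$ defined in (\ref{1.1})--(\ref{S=}) is strictly positive for every sufficiently large $x$; this forces some integer $n \sim x$ with $n \equiv b \bmod W$ to satisfy $\Omega(\mathcal{P}(n)) \leq \varrho$, and iterating over dyadic ranges then yields $DHL_\Omega [k;\varrho]$.

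First I would apply the hypothesis (\ref{key upper bound}) to pick up $\mathcal{S}_0 \geq (\alpha - o(1)) B^{-k} x/W$. The main work is controlling $\mathcal{S}_\Omega$ from above, which I would split into the contributions from $n$ with $\mathcal{P}(n)$ squarefree and $\mathcal{P}(n)$ not squarefree. The non-squarefree piece is bounded using the pointwise inequality $\Omega(\mathcal{P}(n)) \leq \tau(\mathcal{P}(n))$ (every prime factor is a divisor) together with the third displayed hypothesis, showing that this tail contributes only $o(B^{-k} x/W)$.

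The core of the argument is the squarefree part. Here I would invoke identity (\ref{identity na sumy}) with the choice $T := x^{1/\ell}$, so that $\log p/\log T = \ell \log_x p$. Because $\mathcal{P}(n)$ is squarefree the linear factors $L_i(n)$ are pairwise coprime, so the identity becomes $\Omega(\mathcal{P}(n)) = \ell \log_x \mathcal{P}(n) + \sum_{i=1}^k \sum_{p|L_i(n)}(1 - \ell \log_x p)$. Since $n \sim x$ gives $\log_x \mathcal{P}(n) = k + O(1/\log x)$ uniformly, substituting and applying (\ref{key lower bounds}) yields
\[ \mathcal{S}_\Omega \leq (\ell k + o(1))\,\mathcal{S}_0 + \left(\sum_{i=1}^k \beta_i + o(1)\right) B^{-k} \frac{x}{W}, \]
where I have used $\mathcal{S}_0^{\mathrm{sf}} \leq \mathcal{S}_0$ in the first term. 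Combining with the lower bound on $\mathcal{S}_0$ gives $\mathcal{S} \geq \bigl((\varrho - \ell k)\alpha - \sum_i \beta_i - o(1)\bigr) B^{-k} x/W$, which is positive for large $x$ precisely under the key inequality of the lemma.

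The "moreover" clause is a direct consequence of the fact that $\Omega(\mathcal{P}(n))$ is intrinsically independent of $\ell$: if (\ref{key upper bound})--(\ref{key lower bounds}) were asymptotic equalities with $\beta_i = \beta_i(\ell)$, then tracking the main term through the identity above forces $\ell k \alpha + \sum_i \beta_i(\ell)$ to be constant in $\ell$, and dividing by $\alpha$ gives the claim. The only genuinely delicate bookkeeping is ensuring that replacing $\mathcal{S}_0^{\mathrm{sf}}$ by $\mathcal{S}_0$ and absorbing the non-squarefree errors is lossless to leading order; this is exactly why the third hypothesis is stated with the weight $\tau(\mathcal{P}(n))$ rather than with $\nu(n)$ alone, and it is this point rather than any deep estimate that drives the whole criterion.
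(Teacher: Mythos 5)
Your proposal is correct and follows essentially the same route as the paper: the same split into $\varrho\,\mathcal{S}_0$ minus the squarefree part of $\mathcal{S}_\Omega$ with the non-squarefree contribution absorbed via $\Omega(\mathcal{P}(n))\leq\tau(\mathcal{P}(n))$, the same rewriting of $\Omega(\mathcal{P}(n))$ on squarefree values through identity (\ref{identity na sumy}) with $\log_x\mathcal{P}(n)=k+O(1/\log x)$, and the same equality/invariance argument for the ``moreover'' clause. Your final bound $\left((\varrho-\ell k)\alpha-\sum_i\beta_i-o(1)\right)B^{-k}x/W$ is the correct form (the sign in the paper's display (\ref{wykonczeniowka}) is a typo), and it yields positivity exactly under the key inequality.
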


\begin{proof}
We have
\begin{multline}\label{rozwiniecie_setup}
 \sum_{ \substack{ n \sim x \\ n \equiv b \bmod W }}\left( \varrho - \Omega (\mathcal{P} (n) \right)\nu (n)  =  
\varrho \left( \sum_{ \substack{ n \sim x \\ n \equiv b \bmod W }} \nu (n) \right) - \left(  \sum_{ \substack{ n \sim x \\ n \equiv b \bmod W \\ \mathcal{P}(n) \textup{ sq-free} }}   
\Omega (\mathcal{P} (n))  \nu (n) \right) \\ +   
 O \left(  \sum_{ \substack{ n \sim x \\ n \equiv b \bmod W \\ \mathcal{P}(n) \textup{ not sq-free} }} \tau ( \mathcal{P} (n))  
   \nu (n) \right) . 
  \end{multline}
We also observe that 
\begin{multline}\label{dalsze_rozwiniecie}
  \sum_{ \substack{ n \sim x \\ n \equiv b \bmod W \\ \mathcal{P}(n) \textup{ sq-free} }}   
\Omega (\mathcal{P} (n))  \nu (n) =
 \left( \sum_{i=1}^k \sum_{ \substack{ n \sim x \\ n \equiv b \bmod W \\ \mathcal{P}(n) \textup{ sq-free} }}   
  \sum_{p | L_i (n) } \left( 1 - \ell \log_x p \right)    \nu (n) \right) \\
+ (\ell k + o(1)) \left( \sum_{ \substack{ n \sim x \\ n \equiv b \bmod W }} \nu (n) \right) .
\end{multline}
Combining (\ref{rozwiniecie_setup}--\ref{dalsze_rozwiniecie}) with the assumptions we arrive at
\begin{equation}\label{wykonczeniowka}
 \sum_{ \substack{ n \sim x \\ n \equiv b \bmod W }}\left( \varrho - \Omega (\mathcal{P} (n) \right)\nu (n)   \geq
  \left(  ( \varrho + \ell k ) \, \alpha - \sum_{i=1}^k \beta_i - o(1) \right)B^{-k} \frac{x}{W}.
\end{equation}
Note that (\ref{wykonczeniowka}) becomes an equality, if one replaces inequalities (\ref{key upper bound}--\ref{key lower bounds}) with equalities -- in such a case the left-hand side of (\ref{wykonczeniowka}) obviously does not depend on the $\ell$ variable, so the same has to be true for the right-hand side of (\ref{wykonczeniowka}). We conclude that the left-hand side of (\ref{wykonczeniowka}) is asymptotically greater than $0$ if
\begin{equation}
 \varrho > \frac{\beta_1 + \dots + \beta_k}{\alpha} + \ell k. 
\end{equation}
\end{proof}

As mentioned in Table A, the main goal of this work is to prove the following result.
\begin{thm}[Main Theorem]\label{MAIN} $DHL_\Omega [k, \varrho_k]$ holds with the values $\varrho_k$ given in a table below
\emph{
\begin{center} 
\centering
\text{Table B.}
\vspace{1.2mm}
\\
\renewcommand{\arraystretch}{1}
  \begin{tabular}{ | I || F|  F | F | F | F | F | F | F | F |@{}m{0cm}@{}}\hline
    $k$ &  2 & 3  &  4  &  5 & 6 & 7 & 8 & 9 & 10 &  \rule{0pt}{4ex}  \\ \hline 
     Unconditionally & \textit{4} & \textit{7} &  \textit{11}  & \textit{14}  & \textit{18}  & \textbf{21}  & \textbf{25}  & \textbf{29}  & \textbf{33} &  
\rule{0pt}{4ex}  \\
    Assuming $GEH$ & \textit{3} &  \textit{6} &  \textbf{8}  & \textbf{11}  &  \textbf{14}  & \textbf{17}  &  \textbf{21}  & \textbf{24}& \textbf{27}  &
  \rule{0pt}{4ex}  \\
    \hline
  \end{tabular}
\end{center}
 \emph{(bolded text indicates the novelties in the field).}   }
\end{thm}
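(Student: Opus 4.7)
The plan is to verify the hypotheses of Lemma~\ref{criterion} with a carefully designed Selberg-type weight, for each value of $k$ in Table~B. Following the Maynard--Tao paradigm, I would take
\begin{equation*}
\nu(n) := \bigg(\sum_{\substack{d_i \mid L_i(n) \\ i=1,\dots,k}} \lambda_{d_1,\dots,d_k}\bigg)^2,
\end{equation*}
where the coefficients $\lambda_{d_1,\dots,d_k}$ are derived from a symmetric smooth cutoff $F$ in $k$ variables supported on a simplex. The quantities $\alpha$ and $\beta_i$ in (\ref{key upper bound}--\ref{key lower bounds}) then become explicit integral functionals of $F$: $\alpha$ is essentially an integral of $F^2$ over the simplex, while each $\beta_i$ involves an integral of a marginal derivative of $F$ convolved against a kernel encoding the weighting $(1-\ell \log_x p)$ from identity~(\ref{identity na sumy}).

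To estimate the contribution $\mathcal{S}_\Omega$, I would split the inner sum over $p \mid L_i(n)$ by the size of $p$. For $p$ small, the factor $(1-\ell\log_x p)$ is positive, and a standard upper-bound Selberg sieve suffices with essentially no distributional input. For $p$ above the critical threshold, that factor turns negative, so upper bounds on the sieve weight are automatically converted into lower bounds on the quantity being controlled---exactly what Lemma~\ref{criterion} demands. The distributional input in this large-prime regime is the Generalized Bombieri--Vinogradov theorem in the unconditional case and $GEH$ in the conditional case. Crucially, to push the sieve support beyond the natural level of distribution $1/2$ without sacrificing the positivity required for $\nu$, I would incorporate the $\varepsilon$-trick of \cite{Polymath8}, replacing the simplex $\{t_1+\dots+t_k \leq 1\}$ with $\{t_1+\dots+t_k \leq 1+\varepsilon\}$ at the price of an $\varepsilon$-dependent loss that can be absorbed into the $o(1)$ terms of (\ref{key lower bounds}). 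The joint use of (\ref{identity na sumy}) and the $\varepsilon$-trick is the crux of how this work improves on \cite{Lewulis}.

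Once $\alpha$ and the $\beta_i$ are expressed as functionals of $F$, the theorem reduces to the variational inequality
\begin{equation*}
\varrho_k > \inf_{F,\ell}\bigg(\frac{\beta_1(F,\ell) + \dots + \beta_k(F,\ell)}{\alpha(F)} + \ell k\bigg).
\end{equation*}
I would parametrize $F$ in a finite-dimensional space of symmetric polynomials in the power sums $P_1,\dots,P_m$ of bounded degree, reducing the optimization to a generalized eigenvalue problem whose matrix entries are Beta-type integrals over the simplex. The principal obstacle is computational rather than conceptual: to cross each threshold in Table~B---especially the unconditional records at $k = 7,\dots,10$---one must push the polynomial degree sufficiently high while jointly tuning $\ell$ and the $\varepsilon$-trick parameter, and the combinatorial complexity of the matrix entries grows rapidly with $k$. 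A secondary technical nuisance is checking that the contribution from $n$ with $\mathcal{P}(n)$ not square-free remains genuinely $o(B^{-k}x/W)$ under the extended support, since the usual $O_\epsilon(x^\epsilon)$ divisor bound is close to borderline once the $\varepsilon$-trick enlarges the sieve range.
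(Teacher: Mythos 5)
Your outline follows the paper's actual route: Lemma~\ref{criterion}, a tensorized Maynard--Selberg weight as in (\ref{NuSelberg}), the splitting of $\Omega$ via (\ref{identity na sumy}) into a small-prime range needing no distributional input and a large-prime range where the negative factor converts the needed upper bound into a lower-bound problem, the $\varepsilon$-trick of \cite{Polymath8}, and a finite-dimensional polynomial optimization (the paper uses polynomials in $1-P_1$ only). However, one central claim in your plan is wrong: the cost of the $\varepsilon$-trick is \emph{not} absorbable into the $o(1)$ terms of (\ref{key lower bounds}). If it were, nothing would prevent taking $\varepsilon$ arbitrarily large with an unchanged variational problem, proving far more than Table~B. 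In reality the enlargement is paid for in the main terms, in two ways: (i) the small-prime range is handled by the trivial case of Proposition~\ref{Powerful_Proposition}, which forces the constraint $2\theta(1+\varepsilon)+\tfrac{1}{\ell}\leq 1$ (constraint (\ref{wiezy})), rigidly coupling $\varepsilon$ to $\ell$ and $\theta$ and making $\ell$ a genuinely active parameter; (ii) in the large-prime range the pointwise bound $\nu \geq (x_1+2x_2)x_1$ yields only an inequality whose main term involves the functionals $J_{i,\varepsilon}$ and $Q_{i,\varepsilon}$ integrated over the \emph{shrunken} region $(1-\varepsilon)\cdot\mathcal{R}_{k-1}$ (resp.\ $(1-\varepsilon)$ for $y\geq \tfrac{1}{\ell\theta}$). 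The trade-off between the enlarged $I(F)$ and these degraded negative/positive contributions is exactly what produces $21,25,29,33$ in Theorem~\ref{MAIN}; your accounting, taken literally, would not reproduce these numbers.

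A second, related gap concerns the $GEH$ column. At $\theta=1/2$ the constraint (\ref{wiezy}) cannot hold for any $\eta\geq 1$, so the $\varepsilon$-trick with enlarged support $\{t_1+\dots+t_k\leq 1+\varepsilon\}$ is unavailable there; the conditional records come from the case $\varepsilon=0$, i.e.\ sieving on the extended simplex $\mathcal{R}_k'$ (Theorem~\ref{ext_simplex_sieving}), whose defining condition bounds each sum of $k-1$ of the coordinates rather than the full sum. This matters concretely for $k=4$: one-dimensional sieving on $\mathcal{R}_4$ gives only $\Omega_4\leq 9.005>9$, and the claimed value $\varrho_4=8$ is obtained via $\Omega_4^{\mathrm{ext}}(\theta=\tfrac12)<8.81$ on $\mathcal{R}_4'$. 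Your proposal, which only enlarges the standard simplex, would miss this case. The remaining ingredients you list (Proposition~\ref{double_prime_factors}-type control of non-squarefree $\mathcal{P}(n)$, polynomial parametrization, joint tuning of $\ell$ and $\varepsilon$) are in line with the paper.
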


\subsection*{Preparing the sieve} 

In this subsection we are focused on motivating our future choice of sieve weights $\nu (n)$, so this discussion will be slightly informal. Our task is to make the sum (\ref{1.1}) greater than 0 for some fixed $\varrho_k$. That would be sufficient to prove that $DHL_\Omega [k;\varrho_k]$ holds.
Hence, the weight $\nu $ has to be sensitive to almost prime $k$-tuples. We observe that the von Mangoldt function satisfies
\begin{equation*}
\Lambda (n) = \left( \mu * \log \right)(n) = - \sum_{d|n} \mu (d) \log d,
\end{equation*}
which for square-free $n \sim x$ gives
\begin{equation}
\mathbf{1}_{n \text{ is prime}} \approx \sum_{d|n} \mu (d) \left( 1 - \log_x d \right).
\end{equation}
That motivates the following construction of the Selberg sieve:
\begin{equation}\label{Selberg_1}
\mathbf{1}_{n \text{ is prime}} \lessapprox f(0) \left( \sum_{\substack{ d|n}} \mu (d) f( \log_x d) \right)^2,
\end{equation}
where $f\colon [0,+\infty) \rightarrow \mathbf{R}$ is piecewise smooth and supported on $[0,1)$. The problem is that the Bombieri--Vinogradov theorem usually forces us to assume that $\mbox{supp}(f) \subset [0, \theta)$ for some fixed positive $\theta$. The usual choice here is $\theta$ somewhat close to $1/4$, or greater, if one assumes the Elliott--Halberstam conjecture. 

In the multidimensional setting we have
\begin{equation}\label{MSS}
\mathbf{1}_{L_1(n), \dots , L_k(n) \text{ are all primes}} \lessapprox f(0,\dots, 0) \left( \sum_{\substack{ d_1, \dots , d_k \\ \forall i ~ d_i | L_i (n) }} \left( \prod_{i=1}^k \mu (d_i) \right) f \left( \log_x d_1 , \dots , \log_x d_k \right) \right)^2
\end{equation}
for some $f \colon  [0,+\infty)^k \rightarrow \mathbf{R}$ being piecewise smooth and compactly supported. In certain cases this approach can be more efficient than (\ref{Selberg_1}), as was shown in \cite{Maynard}, where it was introduced. Dealing with multivariate summations may be tedious at times, so we would like to transform the right-hand side of $(\ref{MSS})$ a bit by replacing the function $f$ with tensor products
\begin{equation}\label{niezalezne}
f_1(\log_x d_1) \cdots f_k (\log_x d_k), 
\end{equation}
where $f_1, \dots , f_k \colon \mathbf[0,+\infty) \rightarrow \mathbf{R}$. By the Stone--Weierstrass theorem we can approximate $f$ by a linear combination of functions of such form, so essentially we lose nothing here. Our more convenient sieve weights look as follows:
\begin{equation}\label{NuSelberg}
\left( \sum_{j=1}^J c_j \prod_{i=1}^k \lambda_{f_{j,i}} ( L_i (n)) \right)^2
\end{equation}
with some real coefficients $c_j$, some smooth and compactly supported functions $f_{i,j}$. Recall that
\begin{equation*}
\lambda_f (n) := \sum_{d | n } \mu (d) f (\log_x d).
\end{equation*}
It is clear that such a weight can be decomposed into linear combination of functions of the form
\begin{equation}
n \mapsto \prod_{i=1}^k  \lambda_{F_{i}} ( L_i (n))  \lambda_{G_{i}} ( L_i (n)).
\end{equation} 
In fact, (\ref{NuSelberg}) is exactly our choice in Section \ref{proof_theo}.

\subsection*{Distributional claims concerning primes}

In this work we refer to the generalised Elliott--Halberstam conjecture, labeled further as $GEH [\vartheta ]$ for some $0< \vartheta < 1$. This broad generalisation first appeared in \cite{GEH}. Its precise formulation can be found for example in \cite{Polymath8}. The best known result in this direction is currently proven by Motohashi \cite{Motohashi}.
\begin{thm}\label{GBV} $GEH[ \vartheta ]$ holds for every $\vartheta \in (0,1/2)$.
\end{thm}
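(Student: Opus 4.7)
The plan is to adapt the large-sieve machinery underlying the classical Bombieri--Vinogradov theorem to arbitrary Dirichlet convolutions, following Motohashi's induction principle. Fix $\vartheta < 1/2$. By linearity and standard reductions (primitive character decomposition and dyadic splitting in the modulus), it suffices to control the discrepancy for a convolution $f = \alpha \star \beta$ with $\alpha$ supported on $m \sim M$, $\beta$ on $n \sim N$, $MN \asymp x$, where both sequences are divisor-bounded as required in the formulation of $GEH[\vartheta]$.

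First I would expand the discrepancy
\[
\Delta(f; a \bmod q) \;=\; \sum_{n \equiv a \bmod q} f(n) \;-\; \frac{1}{\varphi(q)} \sum_{(n,q)=1} f(n)
\]
in Dirichlet characters modulo $q$; the principal character contribution cancels, leaving only non-principal and ultimately primitive characters. For very small moduli $q \leq (\log x)^A$ a Siegel--Walfisz-type estimate for convolutions, obtained from the classical zero-free region of Dirichlet $L$-functions together with a contour shift, disposes of the contribution directly. For the remaining range $(\log x)^A < q \leq Q := x^\vartheta$, the task reduces to bounding
\[
\sum_{q \leq Q} \frac{q}{\varphi(q)} \sum_{\chi \bmod q}^{*} |A(\chi)| \, |B(\chi)|,
\]
where $A(\chi) := \sum_{m} \alpha(m)\chi(m)$, $B(\chi) := \sum_{n} \beta(n)\chi(n)$, and the $*$ restricts to primitive characters.

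The key step is Cauchy--Schwarz in $\chi$ followed by two applications of the multiplicative large sieve,
\[
\sum_{q \leq Q} \frac{q}{\varphi(q)} \sum_{\chi \bmod q}^{*} |A(\chi)|^2 \;\ll\; (Q^2 + M) \, \|\alpha\|_2^2,
\]
and its analogue for $B$. This yields an upper bound of the shape $(Q^2 + M)^{1/2}(Q^2 + N)^{1/2}\|\alpha\|_2 \|\beta\|_2$. Since $Q^2 = x^{2\vartheta} < x \asymp MN$ whenever $\vartheta < 1/2$, this expression saves an arbitrary power of $\log x$ over the trivial estimate $\ll x/(\log x)^A$, at least when both $M$ and $N$ exceed $Q^{1+\varepsilon}$.

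The main obstacle lies in the unbalanced regime, where one of the factors (say $M$) is much smaller than $Q$ and the large sieve gives no saving on that side. Motohashi's induction handles this by further decomposing the shorter factor via a Heath-Brown or Vaughan-type identity, iteratively splitting into Type~I pieces (one very smooth long factor, amenable to Poisson summation) and Type~II pieces (both supports $\geq Q^{1+\varepsilon}$, treated by the large sieve as above). Since the convolution structure is preserved at each step and the total length is bounded by $x$, the induction terminates after boundedly many iterations and yields the desired bound for every $\vartheta < 1/2$. Pushing past the $1/2$ barrier would require tools outside this framework such as Deligne-type estimates or the dispersion method, which is precisely why $GEH[\vartheta]$ for $\vartheta \geq 1/2$ remains only conjectural.
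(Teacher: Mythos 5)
First, note that the paper does not prove this statement at all: it is quoted as Motohashi's theorem (via the formulation in the Polymath paper), so your sketch can only be measured against the standard argument behind that citation. Against that argument, your proposal has two genuine gaps. The first is quantitative: after orthogonality the discrepancy carries a weight $1/\varphi(q)$, not $q/\varphi(q)$, so the quantity to be bounded is $\sum_{q\le Q}\varphi(q)^{-1}\sum_{\chi\ne\chi_0}|A(\chi)B(\chi)|$, which after passing to primitive characters of conductor $r$ picks up a factor $\ll (\log x)/\varphi(r)$. With your normalization, the bound you display, $(Q^2+M)^{1/2}(Q^2+N)^{1/2}\|\alpha\|_2\|\beta\|_2\asymp (Q^2+M)^{1/2}(Q^2+N)^{1/2}x^{1/2}$, is of size at least $x^{2\vartheta+1/2}$, which exceeds $x$ for all $\vartheta>1/4$; it does not ``save an arbitrary power of $\log x$ since $Q^2<x$''. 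The saving only appears after splitting dyadically in the conductor $r\sim R$ and exploiting the $1/\varphi(r)\approx 1/R$ factor, which turns each dyadic block into $\ll x^{1/2}R+x^{1/2}(M^{1/2}+N^{1/2})+x/R$; one then sums over $\log^{B}x\le R\le x^{\vartheta}$ and handles conductors $r\le\log^{B}x$ by the Siegel--Walfisz hypothesis on one factor. This repair is standard, but as written your central estimate does not give the theorem.

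The second gap is structural. In $GEH[\vartheta]$ the sequences $\alpha,\beta$ are \emph{arbitrary} divisor-bounded coefficient sequences (with one of them assumed to satisfy the Siegel--Walfisz property as part of the hypothesis, not derived from a zero-free region or a contour shift, since there is no $L$-function attached to them). Consequently your treatment of the unbalanced regime via a ``Heath-Brown or Vaughan-type identity'' applied to the shorter factor is not available: those identities decompose specific functions such as $\Lambda$ or $\mu$, not general sequences, and Motohashi's induction principle is a different mechanism (it convolves two sequences each of which already satisfies a level-$1/2$ estimate; it does not perform combinatorial decompositions). The correct resolution is simpler: the formulation of $GEH[\vartheta]$ requires both supports to have length at least $x^{\epsilon}$ (and hence at most $x^{1-\epsilon}$), which excludes the degenerate short-factor case, and within that range the dyadic large-sieve bound above already suffices because the terms $x^{1/2}M^{1/2}+x^{1/2}N^{1/2}\ll x^{1-\epsilon/2}$ are acceptable even when $M\le Q$. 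So no identity-based induction is needed, and invoking one signals a misreading of what the hypothesis allows.
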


In this work we actually need only one specific corollary of $GEH$, which can be perceived as an `Elliott--Halberstam conjecture for almost primes'. 

\begin{thm}\label{GEHwniosek} Assume $GEH[\vartheta]$. Let $r \geq 1$, $\epsilon > 0$, and $A\geq 1$ be fixed. Let 
\[ \Delta_{r,\epsilon} = \{ (t_1,\dots,t_r) \in [\epsilon,1]^r \colon ~ t_1 \leq \dots \leq t_r;~ t_1+\dots+t_r=1\}, \]
and let $F \colon \Delta_{r,\epsilon} \rightarrow {\bf R}$ be a fixed smooth function. Let $f \colon {\bf N} \rightarrow {\bf R}$ be the function defined by setting
\[\displaystyle  f (n) = F \left( \log_n p_1, \dots, \log_n p_r \right) \]
whenever $n=p_1 \dots p_k$ is the product of $r$ distinct primes $p_1 < \dots < p_r$ with $p_1 \geq x^\epsilon$ for some fixed $\epsilon>0$, and $f(n)=0$ otherwise. Then for every $Q \ll x^\vartheta$, we have
\[ \displaystyle  \sum_{q \leq Q} \max_{\substack{(a,q)=1}}
 \left| \Delta \left( \mathbf{1}_{[1,x] } f ; a \bmod q \right)   \right|
  \ll x \log^{-A} x.\]
\end{thm}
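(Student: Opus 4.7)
The plan is to reduce Theorem \ref{GEHwniosek} to the standard two-factor Dirichlet-convolution form of $GEH[\vartheta]$ (the formulation spelled out in \cite{Polymath8}). Informally, $f$ is a smoothly weighted indicator of integers factoring as an ordered product of $r$ primes with prescribed relative sizes, so the Bombieri--Vinogradov-type estimate for $f$ follows from $GEH$ applied to Dirichlet convolutions of prime-indicators supported in short intervals, provided one first partitions the simplex $\Delta_{r,\epsilon}$ into sufficiently small boxes.

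First I would fix a small parameter $\delta = \log^{-C} x$ for a large constant $C = C(A, r, \epsilon)$ to be chosen at the end, and build a smooth partition of unity $1 = \sum_j \eta_j(t_1, \dots, t_r)$ on $\Delta_{r, \epsilon}$ in which every $\eta_j$ is supported in a product box of side $\delta$ and factors as a tensor product $\eta_j(\mathbf{t}) = \prod_{i=1}^r \eta_{j,i}(t_i)$. Smoothness of $F$ on the compact simplex gives $F(\mathbf{t}) = F(\mathbf{a}_j) + O(\delta)$ on the support of $\eta_j$, where $\mathbf{a}_j$ is the center of the $j$-th box. After an auxiliary dyadic decomposition of $n \in [1, x]$ into ranges $n \sim N$ with $x^{r \epsilon} \leq N \leq x$ (which makes the replacement $\log_n \leftrightarrow \log_x$ harmless since $\log N \asymp \log x$), this rewrites
\begin{equation*}
f = \sum_{N \text{ dyadic}} \sum_j F(\mathbf{a}_j) \, g_{N,j} + E,
\end{equation*}
where $g_{N,j}(n)$ indicates that $n = p_1 \cdots p_r \in (N, 2N]$ with each prime factor $p_i$ lying in a prescribed short interval $I_{j,i}$, and $E$ is an error of size $O(\delta)$ times the indicator of $r$-almost primes.

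Second, for each pair $(N, j)$ the sequence $g_{N,j}$ is, up to combinatorial factors accounting for ordering, a Dirichlet convolution of the $r$ prime-indicator sequences $\mathbf{1}_{p \in I_{j,i}}$. I would then group these $r$ factors into two bunches to form a two-factor convolution $\alpha * \beta$; the lower bound $t_i \geq \epsilon$ guarantees that both $\alpha$ and $\beta$ are supported on intervals of a size for which $GEH[\vartheta]$ applies, and the factors automatically satisfy the pointwise bound by a bounded power of $\tau$ required by the hypothesis. The assumed $GEH[\vartheta]$ then yields
\begin{equation*}
\sum_{q \leq Q} \max_{(a,q)=1} \bigl| \Delta(\mathbf{1}_{[1,x]} g_{N,j}; a \bmod q) \bigr| \ll x \log^{-A'} x
\end{equation*}
for every fixed $A' \geq 1$, uniformly over the $O(\delta^{-r} \log x) = O(\log^{Cr+1} x)$ pairs $(N, j)$.

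Finally, summing over all $(N, j)$ costs only a polylogarithmic factor, while the contribution of the error $E$ is controlled by the standard Landau estimate $\#\{n \leq x : n = p_1 \cdots p_r\} \ll x (\log \log x)^{r-1}/\log x$, producing a total error of size $O(\delta \cdot x (\log \log x)^{r-1})$. Choosing $C = C(A, r)$ sufficiently large and then $A' = A + Cr + O(1)$ in the application of $GEH$ delivers the bound $x \log^{-A} x$. The main obstacle will be the organisational task of checking that every grouping of the $r$ factors into two convolutions meets the precise hypotheses of the $GEH[\vartheta]$ formulation of \cite{Polymath8}; this is exactly where the hypothesis $p_1 \geq x^\epsilon$ becomes essential, since if some $t_i$ were allowed to tend to zero one of the convolution factors could degenerate and fall outside the workable regime.
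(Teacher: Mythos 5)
You should first note that the paper does not actually prove Theorem \ref{GEHwniosek}: it is stated without proof, imported from \cite{Polymath8} as a known consequence of $GEH[\vartheta]$, and your reduction to the two-factor Dirichlet-convolution form of $GEH$ is indeed the standard route taken there. However, as written your approximation step has a genuine flaw. A dyadic decomposition $n\sim N$ only determines $\log n$ up to $O(1)$, hence $\log_n p_i$ up to $O(1/\log x)$; so once the intervals $I_{j,i}$ are prescribed in terms of the $p_i$ alone, the error $E$ has pointwise size $O(1/\log x)$, not $O(\delta)=O(\log^{-C}x)$ --- no choice of $C$ improves this. Treating $E$ trivially (even with a sieve bound for almost primes in progressions, taking the maximum over residue classes and summing over $q\le x^{\vartheta}$) only yields a bound of order $x/\log x$, which is not $\ll x\log^{-A}x$ for $A>1$, so the final step of your accounting fails. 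The standard repair is to use the exact identity $\log_n p_i=\log_x p_i/(\log_x p_1+\cdots+\log_x p_r)$, so that $f(n)=G(\log_x p_1,\dots,\log_x p_r)$ for a function $G$ that is smooth precisely because $p_1\ge x^{\epsilon}$ keeps the denominator bounded below; one then performs the box decomposition in the $\log_x p_i$ variables, which incurs a genuine $O(\delta)$ pointwise error and moreover localizes $n$ to a range of multiplicative width $1+O(\log^{1-C}x)$, making the dyadic decomposition unnecessary. (Alternatively, a finer-than-dyadic decomposition of $n$ into multiplicative intervals of width $1+\log^{-C}x$, still only polylogarithmically many, also works.)

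Two hypotheses of the $GEH$ formulation in \cite{Polymath8} are also glossed over. First, one of the two convolution factors must satisfy the Siegel--Walfisz property; this does hold here (each bunch contains at least one localized prime indicator, and the Siegel--Walfisz theorem passes to such convolutions), but it needs to be checked, not only the divisor-type bound you mention. Second, $GEH$ there requires the two scales to multiply to $\asymp x$, whereas your dyadic blocks run down to $N\asymp x^{r\epsilon}$; for those small blocks $GEH$ at level $x$ with moduli up to $x^{\vartheta}$ does not apply directly, and one must instead discard all $n\le x\log^{-A-2}x$ by trivial or sieve bounds, and likewise handle the sharp cutoff $\mathbf{1}_{[1,x]}$, whose boundary cuts through only pieces of total mass $O(x\log^{-C+1}x)$. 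With these repairs your outline coincides with the standard proof; as submitted, the error analysis in the dyadic step is the concrete gap.
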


\section{Outline of the key ingredients}

Let us start from presenting a minor variation of \cite[Theorem 3.6]{Polymath8}. The only change we impose is replacing the linear forms of the shape $n+h_i$ by slightly more general $L_i(n)$. This, however, does not affect the proof in any way. 

\begin{prop}[Non-$\Omega$ sums]\label{Easy_Proposition}
Let $k \geq 1$ be fixed, let $\{ L_1, \dots , L_k \}$ be
a fixed admissible k-tuple, and let $b \bmod W$ be such that $(L_i (b) , W)=1$ for each
$i = 1, \dots , k$. For each fixed $1 \leq i \leq k$, let $F_i, \, G_i \colon [0,+\infty) \rightarrow \mathbf{R}$ be fixed smooth 
compactly supported functions. Assume one of the following hypotheses: \\
\begin{enumerate}
\item (Trivial case) One has  \[ \sum_{i=1}^k (S(F_i) + S(G_i)) < 1.\]  
\item (Generalized Elliott--Halberstam) There exist a fixed $0 < \vartheta < 1$ and $i_0 \in \{1, \dots , k\}$
such that $GEH[ \vartheta ]$ holds, and \[ \sum_{\substack{ 1 \leq i \leq k \\ i \not=i_0 }} (S(F_i) + S(G_i)) < \vartheta. \]
\end{enumerate}
Then, we have 
\[ \sum_{\substack{ n \sim x \\ n \equiv b \bmod W}}\prod_{i=1}^k \lambda_{F_i} (L_i (n)) \lambda_{G_i} (L_i (n) ) = (c+o(1)) B^{-k} \frac{x}{W}, \] 
where
\[ c :=\prod_{i=1}^k \left(  \int \limits_0^1 F_i'(t_i) \, G_i'(t_i) \, dt_i \right) . \]
\end{prop}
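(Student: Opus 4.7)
The plan is to follow the standard Selberg-type analysis, essentially transcribing the argument of \cite[Theorem 3.6]{Polymath8} with the linear forms $L_i(n) = A_i n + B_i$ in place of $n+h_i$; the adjustment is cosmetic because admissibility and the $W$-trick ensure the same local structure.

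First I would expand the definition of the weights to write
\[
\prod_{i=1}^k \lambda_{F_i}(L_i(n))\,\lambda_{G_i}(L_i(n)) \;=\; \sum_{d_1,e_1,\dots,d_k,e_k} \Bigl(\prod_{i=1}^k \mu(d_i)\mu(e_i)\,F_i(\log_x d_i)\,G_i(\log_x e_i)\Bigr) \prod_{i=1}^k \mathbf{1}_{[d_i,e_i] \mid L_i(n)},
\]
and then swap the order of summation with $n\sim x$, $n\equiv b\bmod W$. The conditions $(L_i(b),W)=1$, the admissibility of $\{L_1,\dots,L_k\}$, and the fact that $\mu(d_i)\mu(e_i)$ forces squarefreeness let me restrict the outer sum to tuples with $[d_i,e_i]$ coprime to $W$ and to each other modulo $W$; the Chinese Remainder Theorem together with the solvability of $[d_i,e_i]\mid L_i(n)$ (automatic because $A_i$ is coprime to $[d_i,e_i]$ once $D_0>A$) then identifies the inner sum as the count of $n\sim x$ in a single residue class modulo $W\prod_i [d_i,e_i]$. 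That count equals
\[
\frac{x}{W\prod_i [d_i,e_i]} + \Delta(d_1,e_1,\dots,d_k,e_k),
\]
where $\Delta$ is the discrepancy of the relevant arithmetic progression.

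Next I would extract the main term. The contribution of $x/(W\prod_i[d_i,e_i])$ factorises over the index $i$, yielding
\[
\frac{x}{W}\prod_{i=1}^{k}\Biggl(\sum_{d_i,e_i}\frac{\mu(d_i)\mu(e_i)\,F_i(\log_x d_i)\,G_i(\log_x e_i)}{[d_i,e_i]}\Biggr).
\]
Each one-dimensional Selberg-type sum is asymptotically evaluated by the classical Graham--Fouvry computation (Mellin inversion, or directly the identity $\sum_{d,e}\mu(d)\mu(e)f(\log_x d)g(\log_x e)/[d,e] = (1/\log x)\int_0^1 f'(t)g'(t)\,dt + O(1/\log^2 x)$ after the $W$-trick supplies the factor $\varphi(W)/W$). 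Multiplying these evaluations gives the asserted constant $c$ together with the normalising factor $B^{-k}$.

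Finally, the error analysis is where the two cases diverge and where the real obstacle lies. In the trivial case one has $\prod_i[d_i,e_i]\le x^{\sum_i(S(F_i)+S(G_i))}<x/W$, so the condition $[d_i,e_i]\mid L_i(n)$ together with $n\equiv b\bmod W$ forces at most one value of $n$ for each tuple (or none if the progression falls outside $(x,2x]$), and summing the trivial bound against the bounded coefficients $F_i,G_i$ and the divisor weights yields an acceptable error $o(B^{-k}x/W)$. In the $GEH$ case I would isolate the $i_0$ variable pair and collapse the tuple $(d_{i_0},e_{i_0})$ into a single squarefree modulus $q_{i_0}=[d_{i_0},e_{i_0}]$; the remaining indices contribute a modulus of size at most $x^{\sum_{i\ne i_0}(S(F_i)+S(G_i))}<x^{\vartheta}$, and the $L_{i_0}$-factor, weighted by convolution of $\mu F_{i_0}$ and $\mu G_{i_0}$, produces a function of the kind to which $GEH[\vartheta]$ (Theorem \ref{GEHwniosek}, or rather its underlying version for divisor-like sums) applies, giving the required $O(x\log^{-A}x)$ saving. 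The main obstacle is precisely this GEH application: one must verify that the combined arithmetic function fits the template $F(\log_n p_1,\dots,\log_n p_r)$ after dyadic decomposition of the $d_i, e_i$, which is the core technical contribution already carried out in \cite{Polymath8} and which carries over verbatim since only the shape of $L_i$ has changed.
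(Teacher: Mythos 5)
Your proposal is correct and matches the paper's approach: the paper offers no independent proof of Proposition \ref{Easy_Proposition}, simply observing that it is \cite[Theorem 3.6]{Polymath8} with $n+h_i$ replaced by $L_i(n)$, and your outline -- expanding the weights, restricting to pairwise coprime moduli, extracting the main term $x/(W\prod_i [d_i,e_i])$ whose one-dimensional factors give $c\,B^{-k}$, and splitting the error analysis into the trivial case and the $GEH[\vartheta]$ case with the $i_0$-factor retained as the arithmetic weight -- is precisely that argument transplanted to general linear forms. One slip worth correcting: in the trivial case the modulus $W\prod_i[d_i,e_i]$ is smaller than $x$, so the progression generally contains many values of $n$ rather than at most one; the error term is acceptable not for the reason you state but because the discrepancy per tuple is $O(1)$ while the number of tuples is $x^{\sum_i(S(F_i)+S(G_i))+o(1)}\ll x^{1-\delta}$, which is $o\bigl(B^{-k}x/W\bigr)$.
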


The next result is a crucial component of this work and is a novelty in the topic. Together with Proposition \ref{Easy_Proposition} it creates a way to transform $\mathcal{S}_0$ and $\mathcal{S}_\Omega$ into integrals, effectively converting the main task of finding almost primes into an optimization problem.

\begin{prop}[Sums containing $\Omega$ function]\label{Powerful_Proposition}
Let $k \geq 1$ and $i_0 \in \{ 1 , \dots , k\}$ be fixed, let $\{ L_1, \dots , L_k \}$ be
a fixed admissible k-tuple, and let $b \bmod W$ be such that $(L_i (b) , W)=1$ for each
$i = 1, \dots , k$. For each fixed $1 \leq i \leq k$, let $F_i, \, G_i,  \colon [0,+\infty) \rightarrow \mathbf{R}$ be fixed smooth 
compactly supported functions, and let  $\Upsilon \colon [0,+\infty) \rightarrow \mathbf{R}$ be a bounded Riemann integrable function continuous at $1$. Assume that there exist $\vartheta, \, \vartheta_0 \in (0,1)$ such that one of the following hypoteses holds: 
\begin{enumerate}
\item (Trivial case) One has  \[ \sum_{i=1}^k (S(F_i) + S(G_i)) <  1- \vartheta_0~~~~and~~~~S(\Upsilon) < \vartheta_0 . \]  
\item (Generalized Elliott--Halberstam) Assume that $GEH[ \vartheta ]$ holds, and
\[ \sum_{\substack{ 1 \leq i \leq k \\ i \not=i_0 }} (S(F_i) + S(G_i)) < \vartheta. \]
\end{enumerate}
Then, we have
\begin{equation}\label{upsilon}
  \sum_{ \substack{ n \sim x \\ n \equiv b \bmod W \\ \mathcal{P}(n)~ \textup{sq-free} }} \left(  \sum_{ p|L_{i_0} (n) }    \Upsilon (\log_x p) \right)
 \prod_{i=1}^k  \lambda_{F_{i}} ( L_i (n))  \lambda_{G_{i}} ( L_i (n)) =
  (c+o(1)) B^{-k} \frac{x}{W},
  \end{equation}
where
\[ 
\begin{split} c:=   \left(  \Upsilon (1) \, F_{i_0}(0) \, G_{i_0}(0)  ~+~  \int    \limits_0^1 \frac{\Upsilon ( y)}{y}  \int  \limits_0^{1-y}  \partial_{y} F'_{i_0} ( t_{i_0} )
\, \partial_{y} G'_{i_0}  ( t_{i_0} )  \, dt_{i_0} \, dy \right) 
  \prod_{\substack{ 1 \leq i \leq k \\ i \not=i_0 }} \left(  \int \limits_0^1 F_i'(t_i) \, G_i'(t_i) \, dt_i \right) .
  \end{split} \]
\end{prop}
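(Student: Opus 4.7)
The plan is to swap summations so the prime $p$ is outermost and then to reduce each inner $n$-sum to a setting covered by (the proof of) Proposition \ref{Easy_Proposition}. The key algebraic input is the identity
\[
\lambda_F(L_{i_0}(n)) = -\lambda_{\partial_y F}(L_{i_0}(n)/p), \qquad y := \log_x p,
\]
valid whenever $p \mid L_{i_0}(n)$ and $p^2 \nmid L_{i_0}(n)$; it follows by partitioning divisors of $L_{i_0}(n)$ into those coprime to $p$ and those of the form $pd'$ and using $\mu(pd')=-\mu(d')$. Applied with both $F_{i_0}$ and $G_{i_0}$ (so the two sign flips cancel), the left-hand side of (\ref{upsilon}) equals
\[
\sum_p \Upsilon(\log_x p) \sum_{\substack{n \sim x,~n \equiv b \bmod W \\ p \mid L_{i_0}(n),~\mathcal{P}(n)\text{ sq-free}}} \lambda_{\partial_y F_{i_0}}(L_{i_0}(n)/p)\,\lambda_{\partial_y G_{i_0}}(L_{i_0}(n)/p) \prod_{i\neq i_0} \lambda_{F_i}(L_i(n)) \lambda_{G_i}(L_i(n)).
\]

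I would then split the $p$-range at $y = 1 - \delta$ for an auxiliary $\delta > 0$ to be sent to zero. In the bulk range $y \leq 1-\delta$, the inner sum is a Selberg-type sum in which $L_{i_0}$ has effectively been replaced by $L_{i_0}/p$ along the density-$1/p$ arithmetic progression cut out by $p \mid L_{i_0}(n)$, and the divisor variable paired to this new form ranges up to $x^{1-y+o(1)}$. Transplanting the proof of Proposition \ref{Easy_Proposition} (the extra modulus $p$ is absorbed into the Bombieri--Vinogradov or $GEH[\vartheta]$ budget that the support hypotheses leave free) gives the uniform-in-$p$ asymptotic
\[
\frac{1}{p}\cdot B^{-k}\frac{x}{W}\left(\int_0^{1-y} \partial_y F'_{i_0}(t)\,\partial_y G'_{i_0}(t)\,dt\right) \prod_{i\neq i_0}\int_0^1 F'_i(t)\,G'_i(t)\,dt.
\]
Summing over primes $p$ in the bulk range and converting $\sum_p f(\log_x p)/p \to \int_0^\infty f(y)/y\,dy$ via partial summation against the prime number theorem produces the double-integral part of $c$ in the $\delta \to 0$ limit.

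In the edge range $y > 1 - \delta$, the quotient $m := L_{i_0}(n)/p$ is a bounded positive integer; a direct expansion shows $\lambda_{\partial_y F_{i_0}}(m)\lambda_{\partial_y G_{i_0}}(m) \to 0$ as $\delta \to 0$ for every $m \geq 2$ (using the standard convention $F_{i_0}(1)=G_{i_0}(1)=0$), so only $m=1$, i.e.\ $L_{i_0}(n)=p$ itself prime, contributes. For such $n$ we have $\lambda_{\partial_y F_{i_0}}(1)\lambda_{\partial_y G_{i_0}}(1) \to F_{i_0}(0)G_{i_0}(0)$ and $\Upsilon(\log_x p) \to \Upsilon(1)$ by continuity, and the residual $n$-sum is an instance of Proposition \ref{Easy_Proposition} with the $i_0$-th Selberg pair replaced by the indicator of ``$L_{i_0}(n)$ prime''; the resulting $B^{-1}$ from the prime number theorem along $L_{i_0}$ matches the powers, producing $\Upsilon(1)F_{i_0}(0)G_{i_0}(0)\prod_{i\neq i_0}\int_0^1 F'_iG'_i\,dt$. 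In case~1 the hypothesis $S(\Upsilon) < \vartheta_0 < 1$ gives $\Upsilon(1)=0$ automatically, so no distributional claim on primes near $x$ is needed. The main obstacle is the bulk-range analysis: proving the asymptotic displayed above with uniformity in $p$ and with error control that survives the summation over $p$, which is precisely where the support hypotheses / $GEH[\vartheta]$ are invoked. A secondary subtlety is the $\delta \to 0$ matching of the bulk and edge contributions, which follows from the vanishing calculation for $m \geq 2$ above.
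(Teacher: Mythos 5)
Your identity $\lambda_F(L_{i_0}(n)) = -\lambda_{\partial_y F}(L_{i_0}(n)/p)$ (for $y=\log_x p$, $p\,\|\,L_{i_0}(n)$) is correct and is indeed the combinatorial source of the differenced derivatives $\partial_y F_{i_0}'$, and your treatment of the trivial case is essentially the paper's: there $p \le x^{S(\Upsilon)} < x^{\vartheta_0}$, the total modulus stays below $x^{1-\epsilon}$, one counts trivially via $x/q+O(1)$, and Lemma \ref{sumynacalki} converts $\sum_p \Upsilon(\log_x p)/p$ into the $dy/y$ integral. The genuine gap is the generalized Elliott--Halberstam case. The hypothesis there bounds only $\sum_{i\neq i_0}(S(F_i)+S(G_i)) < \vartheta$, so there is no spare level of distribution to ``absorb'' the modulus $p$: in your bulk range $p$ can be as large as $x^{1-\delta}$, while $GEH[\vartheta]$ only reaches moduli $\ll x^{\vartheta}$ with $\vartheta<1$, and the $\vartheta$-budget is already consumed by the $i\neq i_0$ divisors. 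Nor does applying a distributional estimate separately for each $p$ (with $p\mid L_{i_0}(n)$ built into the sifted function) work: each application saves only an arbitrary power of $\log x$, and summing such errors over the $\asymp x/\log x$ relevant primes swamps the main term, which is only $\asymp B^{-k}x/W$. This is exactly the obstacle you flag as ``error control that survives the summation over $p$,'' but the mechanism you offer does not exist. The paper circumvents it by keeping the $p$-sum inside the arithmetic function: after truncating to $\textup{lpf}(L_{i_0}(n))>x^{\epsilon}$ via Lemma \ref{Almost primality}, it applies the GEH input (Theorem \ref{GEHwniosek}) once, to $m \mapsto \Omega^{\flat}(m)\lambda_{F_{i_0}}(m)\lambda_{G_{i_0}}(m)\mathbf{1}_{\textup{lpf}(m)>x^{\epsilon}}$, with moduli coming only from the $i\neq i_0$ divisors; the main term is then evaluated by factorizing $m$ into primes exceeding $x^{\epsilon}$, applying Lemma \ref{sumynacalki}, a Cauchy-sequence argument in $\epsilon$, and the combinatorial identity of Polymath8. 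Any repair of your bulk analysis is forced back onto this structure.

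Your edge range also has real problems as written. For $y>1-\delta$ the quotient $m=L_{i_0}(n)/p$ is not bounded; it ranges up to $\gg x^{\delta}$, so the pointwise limit $\lambda_{\partial_y F}(m)\lambda_{\partial_y G}(m)\to 0$ for each fixed $m\ge 2$ does not control the edge contribution (one needs the small-prime-factor truncation and the same factorization argument as in the bulk). Moreover there is no convention $F_{i_0}(1)=G_{i_0}(1)=0$ in the hypotheses, so the claimed limits $\lambda_{\partial_y F_{i_0}}(1)\lambda_{\partial_y G_{i_0}}(1)\to F_{i_0}(0)G_{i_0}(0)$ cannot be justified that way; and the residual sum over $n$ with $L_{i_0}(n)$ prime is not an instance of Proposition \ref{Easy_Proposition} but requires an EH-type statement for primes to moduli $x^{\vartheta}$, which $GEH[\vartheta]$ does supply but which must be invoked explicitly.
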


 The first case of Proposition \ref{Powerful_Proposition} is strongly related to \cite[Proposition 5.1]{MaynardK} and \cite[Proposition 1.13]{Lewulis}. 
It is worth mentioning that the conditional results in the latter of these two cited papers relied only on $GEH[2/3]$. It was not possible to invoke the full power of $GEH$ by methods studied there due to certain technical obstacles. The second case of Proposition \ref{Powerful_Proposition} is strong enough to overcome them. It also paves a way to conveniently apply a device called an $\varepsilon$-trick in the unconditional setting.
 
 The role of the last proposition in this section is to deal with the contribution from $n$ such that $\mathcal{P} (n)$ is not square-free. 
 
 \begin{prop}[Sums with double prime factors]\label{double_prime_factors}
Let $k \geq 1$ be fixed, let $\{ L_1, \dots , L_k \}$ be
a fixed admissible k-tuple, and let $b \bmod W$ be such that $(L_i (b) , W)=1$ for each
$i = 1, \dots , k$. For each fixed $1 \leq i \leq k$, let $F_i, \, G_i \colon [0,+\infty) \rightarrow \mathbf{R}$ be fixed smooth 
compactly supported functions. Then, we have 
\[ \sum_{ \substack{ n \sim x \\ n \equiv b \bmod W \\ \mathcal{P}(n) \emph{ not sq-free} }} \tau ( \mathcal{P} (n))    
\left| \prod_{i=1}^k  \lambda_{F_{i}} ( L_i (n))  \lambda_{G_{i}} ( L_i (n)) \right| = o(1) \times B^{-k} \frac{x}{W}. \]
\end{prop}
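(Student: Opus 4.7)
\emph{Proof plan.}
Because $n\equiv b\pmod W$ and $(L_i(b),W)=1$, every prime dividing $\mathcal{P}(n)$ exceeds $D_0$; hence the failure of squarefreeness of $\mathcal{P}(n)$ forces the existence of some prime $p\ge D_0$ with either $p^2\mid L_{i_0}(n)$ for some $i_0$, or $p\mid L_{i_0}(n)$ and $p\mid L_{j_0}(n)$ for some $i_0<j_0$. My plan is to stratify by this prime via the union bound
\[
\mathbf{1}_{\mathcal{P}(n)\text{ not sq-free}} \le \sum_{i_0=1}^k\sum_{p\ge D_0}\mathbf{1}_{p^2\mid L_{i_0}(n)}+\sum_{i_0<j_0}\sum_{p\ge D_0}\mathbf{1}_{p\mid L_{i_0}(n),\,p\mid L_{j_0}(n)}
\]
and to estimate each inner sum via the Selberg sieve calculations that underlie Proposition~\ref{Easy_Proposition}, now adapted to the arithmetic progression of modulus $Wp^2$ (or $Wp$ for the finitely many primes dividing a resultant $A_{i_0}B_{j_0}-A_{j_0}B_{i_0}$, whose total contribution is trivially $O(1)$).

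Within each such progression I would first apply the majorizations $|\lambda_{F_i}\lambda_{G_i}|\le\tfrac12(\lambda_{F_i}^2+\lambda_{G_i}^2)$ and $\tau(\mathcal{P}(n))\le\prod_i\tau(L_i(n))$ to reduce the task to bounding non-negative integrands of the form $\prod_i \tau(L_i(n))\lambda_{H_i}(L_i(n))^2$ with $H_i\in\{F_i,G_i\}$, and then run the standard multidimensional Selberg expansion, absorbing each factor $\tau(L_i(n))$ as an additional divisor sum $\sum_{e_i\mid L_i(n)}1$. The same Euler-product analysis used in the proof of Proposition~\ref{Easy_Proposition} then yields a main term of order $B^{-k}x/(Wp^2)$ for each $p$ not too large compared to $x$; for very large primes $p>x^{1/2+\varepsilon}$ the number of admissible $n$ is $O(1)$ and the total contribution of such $p$ is a negligible $O(x^{1/2+\varepsilon})$.

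The step I expect to be the main obstacle is extracting an additional gain of $(\log_x p)^2$ per prime, without which summation over $p$ would fall short of the target $o(B^{-k}x/W)$. This gain comes from the forced divisibility itself: writing $L_{i_0}(n)=p^2m$ with $p\nmid m$ and grouping squarefree divisors of $p^2m$ into pairs $\{e,pe\}$ with $e\mid m$, $p\nmid e$, one obtains the telescoping identity
\[
\lambda_{F_{i_0}}(L_{i_0}(n)) = -\sum_{\substack{e\mid m,\ e\text{ sq-free}\\ p\nmid e}}\mu(e)\bigl(F_{i_0}(\log_x e+\log_x p)-F_{i_0}(\log_x e)\bigr),
\]
from which smoothness of $F_{i_0}$ gives $|\lambda_{F_{i_0}}(L_{i_0}(n))|\ll \|F_{i_0}'\|_\infty(\log_x p)\cdot 2^{\omega(m)}$; an analogous bound holds for $\lambda_{G_{i_0}}$, furnishing the factor $(\log_x p)^2$. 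Combining this with the fact that the tail $\sum_{p\ge D_0}(\log p)^2/p^2$ tends to zero as $D_0\to\infty$ bounds the total contribution by $o(1)\cdot B^{-k}x/W$. The cross-term case $p\mid L_{i_0}(n)L_{j_0}(n)$ with $i_0\ne j_0$ is handled identically but provides four derivative factors rather than two, hence a strictly better bound. The chief bookkeeping nuisance in carrying this out rigorously is tracking the Selberg expansion when the prime $p$ coincides with one of the many divisor variables $d_i,e_i$ in $\prod_i\lambda_{H_i}^2$, in which case the local Euler factor at $p$ acquires corrections that must be folded into the main-term analysis.
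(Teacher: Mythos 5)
Your overall reduction (all prime factors of $\mathcal{P}(n)$ exceed $D_0$, then a union bound over a prime $p$ witnessing non-squarefreeness) matches the paper's first step, but the core of your plan has a genuine gap. You propose to get, for each $p$, an asymptotic main term of size $B^{-k}x/(Wp^2)$ by running ``the same Euler-product analysis as in Proposition~\ref{Easy_Proposition}''. That analysis controls its error terms only under the support hypotheses of that proposition (total support below $1$, or GEH), whereas Proposition~\ref{double_prime_factors} imposes no restriction whatsoever on $S(F_i),S(G_i)$. Even granting a support bound, after inserting the extra congruence modulo $p^2$ the error from replacing each progression count by $x/q$ is $O(1)$ per divisor tuple and does not decay in $p$; summing over the $\asymp x^{1/2}$ relevant primes multiplies it by $x^{1/2+o(1)}$, and nothing in your plan controls the resulting total, especially in the range $x^{1/10k}\ll p\ll x^{1/2}$ (note that the second part of Lemma~\ref{Almost primality} is only stated for $p_0\le x^{1/10k}$). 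A smaller slip: in the cross case $p\mid L_{i_0}(n)$, $p\mid L_{j_0}(n)$ with $p\ge D_0$, such $p$ divides the fixed resultant $A_{i_0}B_{j_0}-A_{j_0}B_{i_0}<D_0$, so this case is simply empty for large $x$; your ``trivially $O(1)$'' via modulus $Wp$ is not the right reason (a progression mod $Wp$ contains $\asymp x/(Wp)$ integers, which is far from $O(1)$).

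You also misplace the main difficulty: the $(\log_x p)^2$ telescoping gain, while a correct identity, is unnecessary. Once one has a bound $\ll B^{-k}x/(Wp^2)$ uniformly in $p$ (or the bound $\frac{\log_x p}{p}B^{-k}\frac{x}{W}$ for the event $p\mid L_{i_0}(n)$, which is exactly what Lemma~\ref{Almost primality} supplies), the sum over $p\ge D_0$ is already $o(1)\cdot B^{-k}\frac{x}{W}$ because $D_0=\log\log\log x\to\infty$; the real issue is obtaining such a uniform upper bound at all without support hypotheses. The paper's proof sidesteps your problematic medium and large $p$ entirely: it splits $\sum_p$ at $x^\epsilon$, disposes of $p>x^\epsilon$ using only the divisor bound $\tau(n),|\lambda_{F_i}(n)|\ll n^{o(1)}$ together with the fact that $p^2\mid\mathcal{P}(n)$ confines $n\sim x$ to $\ll x/p^2+1$ values (total $\ll x^{1-\epsilon+o(1)}$), handles $p\le x^\epsilon$ by the third part of Lemma~\ref{Almost primality} (a crude upper bound valid for arbitrary fixed compactly supported $F_i,G_i$), obtaining $\ll\epsilon B^{-k}\frac{x}{W}$, and then lets $\epsilon\to0$ slowly. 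To salvage your route you should replace the per-prime asymptotics by that lemma (or prove a $p_0^2$-variant of its second part) and treat all larger primes trivially, at which point your argument collapses into essentially the paper's.
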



Now, we combine Propositions \ref{Easy_Proposition}--\ref{double_prime_factors} to obtain Theorems \ref{st_simplex_sieving}, \ref{ext_simplex_sieving}, and \ref{eps_simplex_sieving} giving us criteria for the $DHL_\Omega$ problem. Theorem \ref{st_simplex_sieving} refers to sieving on standard simplex $\mathcal{R}_k$, which can be considered as a default range for the multidimensional Selberg sieve. The next one, Theorem \ref{ext_simplex_sieving}, deals with the extended simplex $\mathcal{R}_k'$, which was applied in \cite{Lewulis}, where $DHL_\Omega [5;14]$ was proven. We also prove Theorem \ref{eps_simplex_sieving} being the most general of these three. It describes sieving on the epsilon-enlarged simplex. In fact, Theorems \ref{st_simplex_sieving} and \ref{ext_simplex_sieving} are corollaries from Theorem \ref{eps_simplex_sieving}, as noted in Remark \ref{most_general}. 

\begin{thm}[Sieving on a standard simplex]\label{st_simplex_sieving}
 Suppose that there is an arbitrarily chosen fixed real parameter $\ell$ and a fixed $ \theta \in  (0,\frac{1}{2})$ such that $GEH[ 2 \theta ]$ holds.
Let $k \geq 2$ and $m \geq 1$ be fixed integers.
For any fixed compactly supported square-integrable function $F \colon [0,+\infty )^k \rightarrow \mathbf{R}$, define the functionals
\begin{equation}
\begin{split}
I (F) : =& \int_{[0,+\infty )^k}  F (t_1, \dots , t_k)^2   \, dt_1 \dots dt_k, \\
Q_i (F) :=&   \int_0^{\frac{1}{\theta }}  \frac{1- \ell \theta y}{y} \int_{[0,+\infty)^{k-1}}   \left(  
  \int_0^{\frac{1}{\theta } - y}\left(  \partial_y^{(i)} F (t_1, \dots , t_k) \right)^2   dt_i  \right)  \, dt_1 \dots dt_{i-1} \, dt_{i+1} \dots dt_k \, dy,  \\ 
J_i (F) :=&   \int_{[0,+\infty)^{k-1} } \left( \int_0^\infty   F(t_1, \dots , t_k) \, dt_i \right)^2 dt_1 \dots dt_{i-1} \, dt_{i+1} \dots dt_k,
\end{split}
\end{equation}
and let $\Omega_k$ be the infimum
\begin{equation}\label{Omega_k}
\Omega_k := \inf_F   \left( \frac{ \sum_{i=1}^k \left( Q_i (F) + \theta(1-\ell ) J_i (F) \right) }{I(F)} +\ell k \right) , 
\end{equation}
over all square integrable functions $F$ that are supported on the simplex
\[ \mathcal{R}_k := \{ (t_1, \dots , t_k) \in [0,+\infty )^k \colon t_1 + \dots + t_k \leq 1  \} , \] 
  and are not identically zero up to almost everywhere equivalence. If
\[ m > \Omega_k, \]
then $DHL_\Omega [k; m-1]$ holds.
\end{thm}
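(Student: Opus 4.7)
The plan is to specialise Lemma \ref{criterion} to sieve weights of the tensor-product Selberg shape (\ref{NuSelberg}), whose building blocks $f_{j,i}$ and coefficients $c_j$ are manufactured from a near-optimiser $F$ of the variational problem defining $\Omega_k$. Since $m>\Omega_k$, I would first fix a smooth $F$ supported strictly inside $\mathcal{R}_k$ with
\[
\frac{\sum_{i=1}^{k}\bigl(Q_i(F)+\theta(1-\ell)J_i(F)\bigr)}{I(F)}+\ell k<m;
\]
a standard mollification/density argument reduces a general square-integrable competitor to this smooth, open-simplex situation. After the rescaling $s_i=\theta t_i$, the function $G(s):=F(s/\theta)$ is supported on $\{\sum s_i\leq\theta\}$, and I would form its $k$-fold antiderivative
\[
\mathcal{G}(s_1,\dots,s_k):=(-1)^k\theta^{-k}\int_{s_1}^{\infty}\!\!\cdots\!\!\int_{s_k}^{\infty}G(u_1,\dots,u_k)\,du_1\cdots du_k,
\]
which is smooth, compactly supported in the same simplex, and satisfies $\partial_1\cdots\partial_k\mathcal{G}(s)=\theta^{-k}F(s/\theta)$. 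Stone--Weierstrass then approximates $\mathcal{G}$ uniformly by $\sum_j c_j\prod_i f_{j,i}(s_i)$ with each tensor box $\prod_i[0,a_{j,i}]$ sitting strictly inside $\{\sum s_i<\theta\}$, and I take
\[
\nu(n):=\left(\sum_j c_j\prod_{i=1}^k\lambda_{f_{j,i}}(L_i(n))\right)^2.
\]

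Expanding the square and applying Proposition \ref{Easy_Proposition} case (2) --- whose hypothesis $\sum_{i\neq i_0}(S(f_{j,i})+S(f_{j',i}))<2\theta$ holds because each support box sits strictly in the $\theta$-simplex, combined with the assumed $GEH[2\theta]$ --- I identify
\[
\sum_{j,j'}c_jc_{j'}\prod_{i=1}^k\int_0^1 f'_{j,i}(t)f'_{j',i}(t)\,dt=\int(\partial_1\cdots\partial_k\mathcal{G})^2\,ds=\theta^{-k}I(F),
\]
so $\mathcal{S}_0=(\alpha+o(1))B^{-k}x/W$ with $\alpha=\theta^{-k}I(F)$. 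For each $i_0$, Proposition \ref{Powerful_Proposition} applied with $\Upsilon(y)=1-\ell y$ (smooth, bounded, continuous at $1$) controls $\sum_{n}\mathbf{1}_{\mathcal{P}(n)\text{ sq-free}}\sum_{p|L_{i_0}(n)}(1-\ell\log_x p)\nu(n)$; after summing the explicit constant $c$ over $j,j'$, the pointwise-evaluation piece $\Upsilon(1)F_{i_0}(0)G_{i_0}(0)\prod_{i\neq i_0}\int F'_iG'_i$ collapses to $(1-\ell)\theta^{-k+1}J_{i_0}(F)$ (integrating $\partial_1\cdots\partial_k\mathcal{G}$ in the $s_{i_0}$ slot recovers, up to a sign and the $\theta$-rescaling, the boundary value that enters $J_{i_0}$), while the $\int_0^1\Upsilon(y)y^{-1}\cdots\,dy$ part becomes $\theta^{-k}Q_{i_0}(F)$ under the substitution $y=\theta y'$ that matches the $(1-\ell\theta y)/y$ factor in $Q_{i_0}$. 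Thus $\beta_{i_0}=\theta^{-k}\bigl(Q_{i_0}(F)+\theta(1-\ell)J_{i_0}(F)\bigr)$, while Proposition \ref{double_prime_factors} reduces the non-square-free contribution to $o(B^{-k}x/W)$.

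With these inputs, the key inequality in Lemma \ref{criterion} reads
\[
\varrho>\frac{\sum_i\beta_i}{\alpha}+\ell k=\frac{\sum_{i=1}^k\bigl(Q_i(F)+\theta(1-\ell)J_i(F)\bigr)}{I(F)}+\ell k,
\]
which, with $\varrho=m$, is precisely what my choice of $F$ secures. The criterion then delivers $\mathcal{S}>0$ for all sufficiently large $x$; since $\nu\geq 0$ and $\Omega$ is integer-valued, this forces the existence of $n\sim x$ with $\Omega(\mathcal{P}(n))\leq m-1$ for each large $x$, establishing $DHL_\Omega[k;m-1]$. The hardest part will be the double bookkeeping in the tensor-product step: the support boxes must sit strictly inside the $\theta$-simplex in order for Proposition \ref{Easy_Proposition}'s $GEH$ hypothesis to apply with its required strict inequality, and after summation over $j,j'$ the one-variable integrals returned by Propositions \ref{Easy_Proposition}--\ref{Powerful_Proposition} must reassemble into the functionals $I,Q_i,J_i$ of the optimisation function $F$ rather than of the auxiliary sieve function $\mathcal{G}$ --- a reconciliation made possible by the identity $\partial_1\cdots\partial_k\mathcal{G}=\theta^{-k}F(\cdot/\theta)$ together with a closed-to-open-simplex density argument.
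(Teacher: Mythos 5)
Your proposal is correct in substance and uses the same core machinery as the paper (Lemma \ref{criterion}, tensor-product Selberg weights of the shape (\ref{NuSelberg}), and Propositions \ref{Easy_Proposition}--\ref{double_prime_factors}), but it reaches Theorem \ref{st_simplex_sieving} by a somewhat different route. The paper never proves this theorem directly: it is obtained (Remark \ref{most_general}) as the special case $\varepsilon=0$, $\eta=1$ of Theorem \ref{eps_simplex_sieving}, whose proof splits the sum over $p\mid L_{i_0}(n)$ at $p=x^{1/\ell}$, applies the trivial case of Proposition \ref{Powerful_Proposition} with $\Upsilon(y)=(1-\ell y)\mathbf{1}_{y\le 1/\ell}$ below the cut and the $GEH$ case with $\Upsilon(y)=(1-\ell y)\mathbf{1}_{y>1/\ell}$ above it, and performs the reduction from a general square-integrable $F$ to smooth tensor products by citing the Polymath8 steps (72)--(84). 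You instead work directly on $\mathcal{R}_k$ and apply the $GEH$ case once, with the unsplit $\Upsilon(y)=1-\ell y$; this is legitimate precisely because $\mathrm{supp}(F)\subset\mathcal{R}_k$ forces $\sum_{i\ne i_0}(S(f_{i,j})+S(f_{i,j'}))<2\theta$, which is the same observation the paper makes when noting that for $\varepsilon=0$ the constraint (\ref{wiezy}) and the trivial case are not needed. Your bookkeeping of the $\theta$-rescaling is right: $\alpha=\theta^{-k}I(F)$ and $\beta_{i_0}=\theta^{-k}\bigl(Q_{i_0}(F)+\theta(1-\ell)J_{i_0}(F)\bigr)$, so the key inequality of Lemma \ref{criterion} with $\varrho=m$ reduces to $m>\Omega_k$, and your final integrality remark (from $\mathcal{S}>0$ and $\nu\ge 0$ one gets $\Omega(\mathcal{P}(n))\le m-1$) is exactly how the stated conclusion $DHL_\Omega[k;m-1]$, rather than $DHL_\Omega[k;m]$, must be extracted; the paper leaves this implicit. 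Two repairable loose ends: $1-\ell y$ is not bounded on $[0,+\infty)$ as Proposition \ref{Powerful_Proposition} formally requires, so truncate it (only $y\le 1+o(1)$ ever occurs since $p\mid L_{i_0}(n)\ll x$); and sup-norm Stone--Weierstrass approximation of the antiderivative $\mathcal{G}$ does not by itself control the quadratic forms in the \emph{derivatives} $f'_{j,i}$ that Propositions \ref{Easy_Proposition}--\ref{Powerful_Proposition} return, so you should approximate at the level of $F$ (equivalently of $\partial_1\cdots\partial_k\mathcal{G}$) and integrate, or invoke the smooth-partition reduction of Polymath8 --- which is precisely the step the paper handles by citation rather than in detail.
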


\begin{remark} Due to the continuity of $\Omega_k$ we can replace the condition that $GEH [2\theta ]$ holds by a weaker one that $GEH[2 \theta' ]$ holds for all $\theta' < \theta$. Therefore, we are also permitted to take $\theta=1/4$ unconditionally and $\theta = 1/2$ assuming $GEH$. The same remark also applies to Theorems \ref{1dim_sieving}, \ref{ext_simplex_sieving}, and \ref{eps_simplex_sieving}.
\end{remark}

The choice of parameter $\ell$ does not affect the value of $\Omega_k$.  Substituting
\[ F(t_1, \dots , t_k) = f(t_1 + \dots + t_k) \]
for some $f \colon [0, +\infty ) \rightarrow \mathbf{R}$ and fixing $\ell=1$ we get the following result.

\begin{thm}[One-dimensional sieving]\label{1dim_sieving}
 Suppose that there is a fixed $ \theta \in  (0,\frac{1}{2})$ such that $GEH[ 2 \theta ]$ holds.
Let $k \geq 2$ and $m \geq 1$ be fixed integers. For any fixed and locally square-integrable function $f \colon [0,+\infty ) \rightarrow \mathbf{R}$, define
the functionals
\begin{equation}\label{functionals_1dim}
\begin{split}
\bar{I} (f) : =& \int \limits_0^1 f (t)^2 \, t^{k-1} \, dt, \\
\bar{Q}^{(1)} (f) :=& \int \limits_0^1  \frac{1- \theta y}{y} \int \limits_0^{1-y} \left( f(t) - f(t+y) \right)^2 t^{k-1} \, dt \, dy,  \\
\bar{Q}^{(2)} (f) :=& \left( \int \limits_0^1   \int \limits_{1-y}^1 \, + \,  \int \limits_1^{\frac{1}{\theta} -1}  \int \limits_{0}^1   
\, + \, \int \limits_{\frac{1}{\theta} -1}^{\frac{1}{\theta}}  \int \limits_{0}^{\frac{1}{\theta} - y} \, \right) \frac{1- \theta y}{y} \, f(t)^2 \, t^{k-1} \, dt \, dy,  \\
\bar{Q}^{(3)} (f) :=& \int \limits_{\frac{1}{\theta} -1}^{\frac{1}{\theta}}  \frac{1- \theta y}{y} \int \limits_{\frac{1}{\theta} - y}^1 f(t)^2 \, \left( t^{k-1} - \left(t+y - \frac{1}{\theta} \right)^{k-1} \right)  dt \, dy,  
\end{split}
\end{equation}
and let $\bar\Omega_k$ be the infimum
\[ \bar\Omega_k := \inf_f   \left( \frac{ \sum_{i=1}^3 \bar{Q}^{(i)} (f)}{\bar{I}(f)} +1 \right) \cdot k, \]
over all square integrable functions $f$ that are not identically zero up to almost everywhere equivalence. If
\[ m > \bar\Omega_k, \]
then $DHL_\Omega [k; m-1]$ holds.
\end{thm}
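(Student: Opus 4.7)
The plan is to specialize Theorem \ref{st_simplex_sieving} via the Ansatz
\[ F(t_1, \dots, t_k) := f(t_1 + \dots + t_k) \]
together with the choice $\ell = 1$, where $f \colon [0,+\infty) \to \mathbf{R}$ is supported on $[0,1]$ so that $F$ is supported on the simplex $\mathcal{R}_k$. The immediate payoff is that $\theta(1-\ell)=0$, which causes the $J_i$ functionals to drop out of the expression for $\Omega_k$. What remains is to rewrite $I(F)$ and each $Q_i(F)$ in terms of $f$ alone. All of the bookkeeping is driven by the simplex volume formula
\[ \int_{[0,+\infty)^k} g(t_1+\dots+t_k)\, dt_1 \cdots dt_k \;=\; \frac{1}{(k-1)!} \int_0^\infty g(s)\, s^{k-1}\, ds, \]
together with its $(k-1)$-dimensional analogue, obtained by integrating out all coordinates but one.

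Applying the simplex formula with $g(s) = f(s)^2$ at once gives $I(F) = \bar{I}(f)/(k-1)!$. By symmetry of the Ansatz in its coordinates the values $Q_1(F), \dots, Q_k(F)$ all coincide, so I would compute only $Q_1(F)$. For fixed $y \in (0, 1/\theta]$ I set $M := 1/\theta - y$ and introduce the sum $s := t_1 + \dots + t_k$ together with the partial sum $r := t_2 + \dots + t_k$, so that $\partial_y^{(1)} F = f(s+y) - f(s)$ depends only on $s$, while the constraint $t_1 \leq M$ becomes $r \geq s - M$. Integrating the variables $(t_2, \dots, t_k)$ over the level simplex $\{r \geq 0\}$ via the $(k-1)$-dimensional analogue of the identity above, and then swapping the order of integration, yields
\[ \int_{[0,\infty)^{k-1}} \!\! \int_0^M (f(s+y)-f(s))^2 \, dt_1 \prod_{j=2}^k dt_j \;=\; \frac{1}{(k-1)!}\int_0^\infty (f(s+y)-f(s))^2 \bigl( s^{k-1} - (s-M)_+^{k-1} \bigr) ds, \]
where $(x)_+ := \max(x, 0)$.

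The final step is delicate but mechanical: split the outer $y$-integration in $Q_1(F)$ into the three regimes $y \in [0,1]$, $y \in [1, 1/\theta - 1]$, and $y \in [1/\theta - 1, 1/\theta]$, which correspond respectively to $M \in (1, 1/\theta]$, $[1, 1/\theta - 1]$, and $[0, 1)$, and then use the support of $f$ on $[0,1]$ to kill extraneous terms. In the first regime $M > 1$, so $(s-M)_+^{k-1}$ contributes nothing on the support of $f$, and one further splits $s \in [0, 1-y]$ against $s \in (1-y, 1]$ to separate the genuine difference $(f(s+y)-f(s))^2$ from the boundary piece $f(s)^2$; this recovers $\bar{Q}^{(1)}(f)$ together with the first term of $\bar{Q}^{(2)}(f)$. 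In the middle regime $f(s+y) \equiv 0$ throughout $[0,1]$ and $(s-M)_+^{k-1}$ still vanishes on $[0,1]$, producing the second term of $\bar{Q}^{(2)}(f)$. In the third regime $f(s+y) \equiv 0$ again; the integral over $s \in [0,M]$ supplies the third term of $\bar{Q}^{(2)}(f)$, while the correction $(s-M)_+^{k-1} = (s+y - 1/\theta)^{k-1}$ for $s \in (M,1]$ reproduces $\bar{Q}^{(3)}(f)$. Combining, one obtains $Q_i(F) = (k-1)!^{-1}(\bar{Q}^{(1)} + \bar{Q}^{(2)} + \bar{Q}^{(3)})(f)$, so the ratio entering $\Omega_k$ collapses to precisely the one entering $\bar\Omega_k$. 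Consequently $m > \bar\Omega_k$ produces via this Ansatz an admissible $F$ certifying $m > \Omega_k$, and Theorem \ref{st_simplex_sieving} delivers $DHL_\Omega[k; m-1]$. The main obstacle is precisely this case analysis: the three pieces of $\bar{Q}^{(2)}$ must be matched against the three $y$-regimes without double-counting, and the correct splitting of the squared difference at the boundary of the support of $f$ in the first regime is easy to get slightly wrong.
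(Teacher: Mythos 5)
Your proposal is correct and follows essentially the same route as the paper: the substitution $F(t_1,\dots,t_k)=f(t_1+\dots+t_k)$ (truncated to $[0,1]$) with $\ell=1$ in Theorem \ref{st_simplex_sieving}, reduction of $I(F)$ and $Q_i(F)$ to one-dimensional integrals via simplex volumes including the ``bitten'' term $(s-M)_+^{k-1}$, and the same splitting of the $(y,t)$-domain that reproduces $\bar{Q}^{(1)}$, the three pieces of $\bar{Q}^{(2)}$, and $\bar{Q}^{(3)}$ (the paper organizes this as five regions $D_1,\dots,D_5$, which matches your three $y$-regimes after the boundary split). No gaps.
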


We obviously have $\bar{\Omega}_k \geq \Omega_k$ for every possible choice of $k$. We may apply Theorem \ref{1dim_sieving} to get some  non-trivial improvements over the current state of the art in the $GEH$ case. We perform optimization over polynomials of the form $f(x)=a+b(1-x)+c(1-x)^2+d(1-x)^3$ for $-1 < a,b,c,d < 1$. This choice transforms the functionals (\ref{functionals_1dim})  into quadratic forms depending on the parameters $a,b,c,d$. Details including close to optimal polynomials (up to a constant factor) for each $k$ are covered in the table below.

\newpage

\begin{center} 
\centering
\text{Table C. Upper bounds for $\Omega_k$.}
\vspace{1mm}
\\
\renewcommand{\arraystretch}{1}
  \begin{tabular}{ | D || E |  E  | G |@{}m{0cm}@{}}\hline
    $k$ &  $\theta = 1/4 $ & $\theta = 1/2 $  &  $f(1-x)$ &  \rule{0pt}{3ex}  \\ \hline 
    $2$ & 5.03947 &  3.84763 &  $3 + 25x - x^2 + x^3$     &  \\
    $3$ & 8.15176  &  6.31954  & $1 + 12x - 2x^2 + 9x^3$    &  \\ 
    $4$ & 11.49211  & 9.00542 &  $1 + 15x - x^2 + 19x^3 $   &  \\ 
    $5$ & 15.01292  & 11.86400 &  $1 + 16x + 5x^2 + 32x^3 $   &  \\ 
    $6$ & 18.68514  & 14.86781 &  $1 + 26x - 8x^2 + 86x^3$   &  \\ 
    $7$ & 22.48318  & 17.99402 &  $1 + 24x + 6x^2 + 110x^3$   &  \\
    $8$ & 26.39648  & 21.23219 &  $1 + 30x + x^2 + 200x^3$   &  \\ 
    $9$ & 30.40952  & 24.56817 &  $1 + 30x + 3x^2 + 260x^3$   &   \\ 
    $10$ & 34.51469  & 27.99372  & $1 + 36x - x^2 + 400x^3$  &   \\
      \hline
  \end{tabular}
\end{center}
It turns out that close to optimal choices in the unconditional setting are also close to optimal under $GEH$. These results are sufficient to prove the conditional part of Theorem \ref{MAIN} in every case except for $k=4$. Unfortunately, by this method we cannot provide any unconditional improvement over what is already obtained in  \cite{MaynardK}, as presented in Table C. Therefore, let us try to expand the sieve support a bit. 

\begin{thm}[Sieving on an extended simplex]\label{ext_simplex_sieving}
 Suppose that there is a fixed $ \theta \in  (0,\frac{1}{2})$ such that $GEH[ 2 \theta ]$ holds and an arbitrarily chosen fixed real parameter $\ell$.
Let $k \geq 2$ and $m \geq 1$ be fixed integers. Let $\Omega_k^{\emph{ext}}$ be defined as in (\ref{Omega_k}), but where the supremum now ranges over all square-integrable and non-zero up to almost everywhere equivalence $F$ supported on the extended simplex
\[  \mathcal{R}'_k := \{ (t_1, \dots , t_k) \in [0,+\infty )^k \colon  \forall_{i \in \{ 1, \dots , k \} }~ t_1 + \dots + t_{i-1} + t_{i+1} + \dots + t_k \leq 1   \}.\]  
 If
\[ m > \Omega^\emph{ext}_k, \]
then $DHL_\Omega [k; m-1]$ holds.
\end{thm}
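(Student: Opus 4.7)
My strategy is to follow the proof of Theorem \ref{st_simplex_sieving} essentially verbatim, but allowing the trial function $F$ to be supported on the strictly larger set $\mathcal{R}_k'$. I would invoke Lemma \ref{criterion} with the Selberg-type weight of the form (\ref{NuSelberg}),
\[ \nu(n) = \left( \sum_j c_j \prod_{i=1}^k \lambda_{f_{j,i}}(L_i(n)) \right)^2, \]
where the scalar functions $f_{j,i}$ come from a Stone--Weierstrass tensor-product approximation $F(t_1,\dots,t_k) \approx \sum_j c_j \prod_i F_{j,i}(t_i)$ of a fixed smooth $F$ supported compactly inside $\mathcal{R}_k'$, with the Selberg parameter rescaled so that the theorem variables correspond to $t_i = \log_x d_i / \theta$ (equivalently $f_{j,i}(u) := F_{j,i}(u/\theta)$). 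By shrinking supports slightly I may further assume that each summand $\prod_i F_{j,i}$ has support strictly inside $\mathcal{R}_k'$.

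The defining property of $\mathcal{R}_k'$ --- namely $\sum_{l \neq i}t_l \leq 1$ for \emph{every} index $i$ --- is precisely what allows the GEH case (2) of Propositions \ref{Easy_Proposition} and \ref{Powerful_Proposition} to be verified for \emph{any} chosen $i_0$. Indeed, for each cross-term arising in the expansion of $\nu^2$ one has
\[ \sum_{l \neq i_0} \bigl( S(f_{j,l}) + S(f_{j',l}) \bigr) \leq 2\theta, \]
with strict inequality by the shrunken support, so Proposition \ref{Easy_Proposition}(2) applies termwise to (\ref{key upper bound}) and, after aggregation and passage to the Stone--Weierstrass limit, yields $\alpha = \theta^k I(F)$. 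For (\ref{key lower bounds}) I would apply Proposition \ref{Powerful_Proposition}(2) with $i_0 := i$ and $\Upsilon(y) := 1 - \ell y$; the change of variables $u = \theta t$ converts its main-term integral $\int_0^1 \Upsilon(y)/y \int_0^{1-y} \partial_y F'_{i_0}\,\partial_y G'_{i_0}$ into $\theta^k Q_i(F)$, and converts the boundary piece $\Upsilon(1) F_{i_0}(0) G_{i_0}(0) \prod_{l \neq i_0}\int F'_l G'_l$ into $\theta^{k+1}(1-\ell) J_i(F)$ (via $F(0) = -\int F'\,dt$ together with the Jacobian factors). Proposition \ref{double_prime_factors} controls the non-squarefree contribution by $o(1) \cdot B^{-k} x/W$.

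The ratio $\beta_i/\alpha$ in Lemma \ref{criterion} therefore simplifies to $(Q_i(F) + \theta(1-\ell) J_i(F))/I(F)$, the common factor $\theta^k$ cancelling, so Lemma \ref{criterion} certifies $DHL_\Omega[k, \varrho]$ whenever
\[ \varrho > \frac{\sum_{i=1}^k \bigl(Q_i(F) + \theta(1-\ell) J_i(F)\bigr)}{I(F)} + \ell k, \]
and taking the infimum over admissible $F$ reproduces $\Omega_k^{\emph{ext}}$. The principal technical obstacle will be the bookkeeping of Jacobian factors in the rescaling --- in particular, verifying that the $\Upsilon(1)F(0)G(0)$ boundary contribution indeed reassembles, after the Stone--Weierstrass limit, into the precise expression $\theta(1-\ell) J_i(F)$ of the theorem statement. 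A secondary concern is ensuring that the strict inequality in the GEH hypothesis is preserved uniformly across all cross-terms; this is handled by approximating $F$ by functions compactly supported in the interior of $\mathcal{R}_k'$ and invoking continuity of the functionals $I$, $Q_i$, $J_i$ in $F$.
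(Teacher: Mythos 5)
Your argument is essentially sound, but it takes a different route from the paper. The paper does not prove Theorem \ref{ext_simplex_sieving} from scratch: as stated in Remark \ref{most_general}, it is obtained as the special case $\varepsilon=0$, $\eta\geq k/(k-1)$ of Theorem \ref{eps_simplex_sieving} (summing the $k$ defining constraints of $\mathcal{R}_k'$ shows $\mathcal{R}_k'\subseteq\tfrac{k}{k-1}\mathcal{R}_k$, so the support region $(1+\varepsilon)\mathcal{R}_k'\cap\eta\mathcal{R}_k$ collapses to $\mathcal{R}_k'$, the constraint (\ref{wiezy}) is discarded, $Q_{i,0}=Q_i$, $J_{i,0}=J_i$, and the functional becomes constant in $\ell$, matching the arbitrary $\ell$ of the statement). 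What you propose is in effect a direct re-run of the machinery used in the paper's proof of Theorem \ref{eps_simplex_sieving}, specialized to $\varepsilon=0$: since the (shrunken) support lies in $\mathcal{R}_k'$, the GEH case of Propositions \ref{Easy_Proposition} and \ref{Powerful_Proposition} applies to every cross-term and every $i_0$, so you get asymptotic equalities throughout, no splitting of the $p$-sum at $x^{1/\ell}$, no pointwise lower-bound trick, and no positivity issues when recombining cross-terms with signed coefficients $c_jc_{j'}$; Proposition \ref{double_prime_factors} and Lemma \ref{criterion} then close the argument exactly as you describe. The paper's route buys economy (one proof covers all three sieving theorems); yours buys a self-contained and somewhat simpler argument in which the role of the $\mathcal{R}_k'$ condition is transparent.

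One piece of bookkeeping you should make precise, since as literally written it is off: if you approximate $F$ itself by $\sum_j c_j\prod_i F_{j,i}$ and feed these factors (rescaled by $\theta$) into the $\lambda$'s, then Proposition \ref{Easy_Proposition} produces the constant $\sum_{j,j'}c_jc_{j'}\prod_i\int F_{j,i}'F_{j',i}'\,dt$, which is the square integral of the mixed partial derivative $\partial^k(\sum_j c_j\prod_i F_{j,i})/\partial t_1\cdots\partial t_k$, not $I(F)=\int F^2$; likewise the $Q$- and $J$-type constants come out in terms of derivatives of the sieve-level functions. The correct identification is that the theorem-level $F$ corresponds to that mixed partial derivative (equivalently, you must tensor-approximate an antiderivative of $F$), which is exactly the reduction the paper imports from \cite{Polymath8} (the steps cited as (72--84)) in its proof of Theorem \ref{eps_simplex_sieving}. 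With that reduction in place, your scaling claims (the $\theta$-powers cancelling in the ratio, and the boundary term $\Upsilon(1)F_{i_0}(0)G_{i_0}(0)$ assembling into $\theta(1-\ell)J_i(F)$) do work out; a minor further point is to truncate $\Upsilon(y)=1-\ell y$ to the effective range $y\ll 1$ so that it is bounded as required by Proposition \ref{Powerful_Proposition}.
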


It is difficult to propose a one-dimensional variation of Theorem \ref{ext_simplex_sieving} in a compact form, because the precise shape of functionals analogous to (\ref{functionals_1dim}) varies depending on $k$. We deal with this problem in Subsection \ref{bounds_omegaextk}. Given that, we apply Theorem \ref{ext_simplex_sieving} directly and perform optimization over polynomials of the form $F(t_1,\dots,t_k)=a+b(1-P_1)+c(1-P_1)^2 =: f(P_1)$ for $-1 < a,b,c <1$. Our choice is motivated by the fact that the values of symmetric polynomials generated only by $P_1$ depend only on the sum $t_1+\dots+t_k$, so they behave 'one-dimensionally', which makes all necessary calculations much easier. Moreover, our numerical experiments suggest that including $P_2$ does not provide much extra contribution. Some good choices of polynomials (again, up to a constant factor) and the bounds they produce are listed below.

\newpage

\begin{center} 
\centering
\text{Table D. Upper bounds for $\Omega^{\text{ext}}_k$.}
\vspace{1mm}
\\
\renewcommand{\arraystretch}{1}
  \begin{tabular}{ | D || E |  E  | J |@{}m{0cm}@{}}\hline
    $k$ &  $\theta = 1/4 $ & $\theta = 1/2 $  &  $f(1-x)$ &  \rule{0pt}{3ex}  \\ \hline 
    $2$ & 4.49560 &  3.35492 &  $6 + 8x + 3x^2$     &  \\
    $3$ & 7.84666  &  6.03889  & $2 + 7x + 7x^2$    &  \\ 
    $4$ & 11.27711  & 8.80441 &  $1 + 6x + 9x^2$   &  \\ 
    $5$ & 14.84534  & 11.70582 &  $1 + 7x + 15x^2$   &  \\ 
    $6$ & 18.55409  & 14.74036 &  $1 + 9x + 32x^2$   &  \\ 
    $7$ & 22.38208  & 17.89601 &  $1 + 10x + 46x^2$   &  \\
    $8$ & 26.32546  & 21.16260 &  $1 + 10x + 65x^2$   &  \\ 
    $9$ & 30.37012  & 24.52806 &  $1 + 10x + 90x^2$   &   \\ 
    $10$ & 34.50669  & 27.98326  & $1 + 11x + 121x^2$  &   \\
      \hline
  \end{tabular}
\end{center}

The results from the $\theta = 1/4$ column in Table D predict the limitations of methods developed in \cite{Lewulis}. In the conditional case we also get a strong enhancement over what is achievable by sieving on the standard simplex in the $k=4$ case. In the $k=2$ case we observe a standard phenomenon that passing through the constant $3$ seems impossible, most probably because of the parity obstruction as mentioned in \cite{Polymath8}. In this work we do not 
make any attempt to break this notorious barrier, so we do not expect to outdo the result of Chen -- even assuming very strong distributional claims like $GEH$.

In order to push our results even more, we would like to apply a device called an $\varepsilon$-trick, which made its debut in \cite{Polymath8}. The idea is to expand the sieve support even further than before, but at a cost of turning certain asymptotics into lower bounds. This is also the place where the $\ell$ parameter starts to behave non-trivially.

\begin{thm}[Sieving on an epsilon-enlarged simplex]\label{eps_simplex_sieving}
 Suppose that there is a fixed $ \theta \in  (0,\frac{1}{2})$ such that $GEH[ 2 \theta ]$ holds, and arbitrarily chosen fixed real parameters $\ell > 1 $, $\varepsilon \in [0,1)$, and $\eta \geq 1+\varepsilon$ subject to the constraint
\begin{equation}\label{wiezy}
 2 \theta \eta  + \frac{1}{\ell} \leq 1.
  \end{equation}
Let $k \geq 2$ and $m \geq 1$ be fixed integers.
For any fixed compactly supported square-integrable function $F \colon [0,+\infty )^k \rightarrow \mathbf{R}$, define the functionals
\begin{equation}
\begin{split}
J_{i,\varepsilon} (F) :=&   \int_{(1-\varepsilon)\cdot \mathcal{R}_{k-1} } \left( \int_0^\infty   F(t_1, \dots , t_k) \, dt_i \right)^2 dt_1 \dots dt_{i-1} \, dt_{i+1} \dots dt_k, \\
 Q_{i,\varepsilon}(F):=&  \int_0^{\frac{1}{ \theta}}   \frac{1- \ell \theta y}{y}   \int_{ \Phi (y) \cdot \mathcal{R}_{k-1}  } \left(  \, \int_0^{\frac{1}{\theta } - y} \left(  \partial_y^{(i)} F (t_1, \dots , t_k) \right)^2  dt_i \right)  dt_1 \dots dt_{i-1} \, dt_{i+1} \dots dt_k   \, dy, 
\end{split}
\end{equation}
where $\Phi \colon [0,+\infty ) \rightarrow \mathbf{R}$ is a function given by the formula
\begin{equation}
\Phi (y) := 
\begin{cases}
1+ \varepsilon, & \emph{for } y \in \left[ 0 ,  \frac{1}{\ell \theta} \right) , \\
1 - \varepsilon, & \emph{for } y \in \left[ \frac{1}{\ell \theta}  , \frac{1}{\theta } \right] , \\
0, & \emph{otherwise.}
\end{cases}
\end{equation}
Let $\Omega_{k,\varepsilon}$ be the infimum
\begin{equation}\label{funkcjonal_eps}
 \Omega_{k,\varepsilon} := \inf_{\eta, F}   \left( \frac{ \sum_{i=1}^k \left( Q_{i,\varepsilon}  (F)   - \theta(\ell -1 ) J_{i,\varepsilon} (F) \right) }{I(F)} +\ell k \right) , 
\end{equation} 
over all square integrable functions $F$ that are supported on the region
\[ (1 + \varepsilon ) \cdot \mathcal{R}_k' \, \cap \, \eta \cdot \mathcal{R}_k , \] 
  and are not identically zero up to almost everywhere equivalence. If
\[ m > \Omega_{k,\varepsilon}, \]
then $DHL_\Omega [k; m-1]$ holds. Moreover, if $\varepsilon =0$, then constraint (\ref{wiezy}) can be discarded and the functional inside the parentheses in (\ref{funkcjonal_eps}) is constant with respect to the $\ell$ variable. 
\end{thm}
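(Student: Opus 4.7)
The plan is to verify the hypotheses of Lemma \ref{criterion} for sieve weights of the squared type (\ref{NuSelberg}), the novelty being a careful deployment of the $\varepsilon$-trick to enlarge the support beyond the standard simplex. After the rescaling $t_i \mapsto t_i/\theta$, which converts sieve-support conditions ``$< 2\theta$'' into the geometric conditions appearing in the theorem statement, Stone--Weierstrass approximates an arbitrary square-integrable $F$ supported on $(1+\varepsilon)\cdot \mathcal{R}_k' \cap \eta \cdot \mathcal{R}_k$ by a linear combination of tensor products $\sum_j c_j \prod_i f_{j,i}(t_i)$, so that
\[ \nu(n) = \Bigl(\sum_j c_j \prod_i \lambda_{f_{j,i}}(L_i(n))\Bigr)^2 \]
becomes a sum of products of the shape handled by Propositions \ref{Easy_Proposition}--\ref{double_prime_factors}.

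For $\mathcal{S}_0$, I would expand the square and apply Proposition \ref{Easy_Proposition} to each term. Those $(j,j')$-pairs whose combined support violates the hypothesis of the proposition correspond, after rescaling, to integrands lying outside $\eta\cdot \mathcal{R}_k$; by non-negativity of $\nu(n)$ they may simply be discarded, yielding the lower bound on $\mathcal{S}_0$ required by Lemma \ref{criterion} with $\alpha = I(F)$.

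For each $\mathcal{S}_\Omega$ component I would apply Proposition \ref{Powerful_Proposition} with $\Upsilon(y) = 1 - \ell\theta y$, so that the integrand carries exactly the factor appearing in (\ref{key lower bounds}). The sign change of $(1 - \ell\theta y)$ at $y = 1/(\ell\theta)$ drives the two-regime treatment characteristic of the $\varepsilon$-trick: for $y < 1/(\ell\theta)$ the factor is positive, so the domain of the auxiliary $(k-1)$-dimensional integration may be \emph{enlarged} to $(1+\varepsilon)\cdot \mathcal{R}_{k-1}$ (this only increases a quantity we are upper bounding); for $y \geq 1/(\ell\theta)$ the factor is negative, and one must \emph{restrict} to $(1-\varepsilon)\cdot \mathcal{R}_{k-1}$ to preserve the direction of the upper bound. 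The two prescriptions combine to yield precisely the step function $\Phi(y)$ in $Q_{i,\varepsilon}$, while the boundary contribution $\Upsilon(1)F_{i_0}(0)G_{i_0}(0)$ from Proposition \ref{Powerful_Proposition}, with $\Upsilon(1) = 1 - \ell\theta$, accounts for the $-\theta(\ell-1)J_{i,\varepsilon}$ term. The non-squarefree contribution is absorbed into $o(1)$ via Proposition \ref{double_prime_factors}.

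The principal obstacle is verifying that constraint (\ref{wiezy}) keeps the enlarged integrands within the validity range of the two propositions: after the $\partial_y^{(i)}$-difference, the combined support of the relevant tensor factor sums (in the rescaled variable) to at most $y + 2\theta(1+\varepsilon)$, which must remain below $1$ for $GEH[2\theta]$ to apply. In the positive regime $y \leq 1/(\ell\theta)$ this reduces exactly to $1/\ell + 2\theta\eta \leq 1$, which is (\ref{wiezy}); the hypothesis $\ell > 1$ is needed to place the sign change $1/(\ell\theta)$ strictly inside $[0,1/\theta]$, making the trick nontrivial. Finally, when $\varepsilon = 0$ the function $\Phi$ collapses to a single value, both $J_{i,\varepsilon}$ and $Q_{i,\varepsilon}$ become the asymptotic quantities of Theorem \ref{st_simplex_sieving}, all inequalities tighten to equalities, and the equality clause of Lemma \ref{criterion} forces the resulting functional to be independent of the $\ell$ variable.
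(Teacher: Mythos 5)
Your overall architecture matches the paper's: weights of the form (\ref{NuSelberg}), reduction to Lemma \ref{criterion}, and a split of the weight $1-\ell\log_x p$ at $p=x^{1/\ell}$, with the small-prime regime handled without distributional input and the large-prime regime via the GEH case of Proposition \ref{Powerful_Proposition}. But there is a genuine gap at the heart of the $\varepsilon$-trick. For $p>x^{1/\ell}$ the weight is negative, so an upper bound on this part of $\mathcal{S}_\Omega$ requires a pointwise \emph{lower} bound on $\nu(n)$, and your proposal supplies none: you assert that one ``must restrict to $(1-\varepsilon)\cdot\mathcal{R}_{k-1}$ to preserve the direction of the upper bound,'' but restricting the domain of the limiting functional is the desired \emph{outcome}, not an argument. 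After expanding the square, the cross terms in which both tensor factors have $\sum_{i\neq i_0}S(\cdot)\geq(1-\varepsilon)\theta$ are not covered by $GEH[2\theta]$, have no definite sign, and are multiplied by a negative weight, so they cannot simply be dropped. The paper's mechanism is to split the index set into $\mathcal{J}_1\cup\mathcal{J}_2$ according to whether $\sum_{i=1}^{k-1}S(f_{i,j})<(1-\varepsilon)\theta$, and to apply the pointwise inequality $(x_1+x_2)^2\geq(x_1+2x_2)x_1$, which produces a minorant of $\nu(n)$ all of whose cross terms contain at least one $\mathcal{J}_1$ factor and hence are GEH-accessible; one then needs the smooth-partitioning (almost-disjoint-supports) argument to identify the resulting bilinear limit with $\widetilde{Q}_{k,\varepsilon}(f)+\widetilde{J}_{k,\varepsilon}(f)+O(\delta_1)$. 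This device is the whole content of the $\varepsilon$-trick and is absent from your proposal.

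Two smaller points. Your treatment of $\mathcal{S}_0$ --- discarding $(j,j')$ pairs that violate the hypotheses of Proposition \ref{Easy_Proposition} ``by non-negativity of $\nu$'' --- fails for the same reason (individual cross terms of a square are not non-negative); fortunately it is also unnecessary, since $F$ is supported on $\eta\cdot\mathcal{R}_k$ and (\ref{wiezy}) gives $\sum_i\left(S(f_{i,j})+S(f_{i,j'})\right)\leq 2\theta\eta<1$ for every pair, so the trivial case yields the exact asymptotic $\alpha=I(F)$. Likewise, in the small-prime regime no GEH is used at all: the point of splitting at $x^{1/\ell}$ is that the trivial case of Proposition \ref{Powerful_Proposition} applies there with $\vartheta_0=1/\ell$, its hypothesis $2\theta\eta\leq 1-1/\ell$ being exactly (\ref{wiezy}); your bookkeeping of the boundary term is also off ($\Upsilon$ evaluated at $\log_x p=1$ equals $1-\ell$, not $1-\ell\theta$; the factor $\theta$ in the coefficient $-\theta(\ell-1)$ of $J_{i,\varepsilon}$ comes from the rescaling between $F$ and $f$). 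Finally, the reason (\ref{wiezy}) can be discarded when $\varepsilon=0$ is that then $\mathrm{supp}(F)\subset\mathcal{R}_k'$, so the GEH case of Proposition \ref{Powerful_Proposition} applies even for $p\leq x^{1/\ell}$ and $\mathcal{J}_2=\emptyset$; this is also what turns the bound into an equality and, via the last clause of Lemma \ref{criterion}, makes the functional independent of $\ell$ --- your closing paragraph gestures at this but does not give the reason.
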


\begin{remark}\label{most_general} Observe that Theorems \ref{st_simplex_sieving} and \ref{ext_simplex_sieving} follow easily from Theorem \ref{eps_simplex_sieving}. In the first case we just consider $\varepsilon=0$ and $\eta=1$. To prove the latter, we take the same $\varepsilon$ and any $\eta \geq k/(k-1)$.
\end{remark}

Constraint (\ref{wiezy}) refers to the hypotheses mentioned in the `trivial case' from Proposition \ref{Powerful_Proposition}. Notice that we do not have to restrict the support of the $Q_{i,\varepsilon} $ integrals for $ y \in \left[ 0 ,  \frac{1}{\ell \theta} \right)$, because we do not apply any $EH$-like theorem/conjecture in this interval. Below we present some upper bounds for $\Omega_{k,\varepsilon}$ obtained via considering $\eta=1+\varepsilon$ and optimizing over polynomials of the form $a+b  (1- P_1) + c  (1-P_1)^2$ for $-1<a,b,c<1$ supported on the simplex $(1+\varepsilon) \cdot \mathcal{R}_k$:
\begin{center} 
\centering
\text{Table E. Upper bounds for $\Omega_{k, \varepsilon}$.}
\vspace{1mm}
\\
\renewcommand{\arraystretch}{1}
  \begin{tabular}{ | D || D |  E  |@{}m{0cm}@{}}\hline
    $k$ &  $ \varepsilon $ & $\theta = 1/4 $  & \rule{0pt}{3ex}  \\ \hline 
    $2$ &  1/3   & 4.69949 & \\
    $3$ &  1/4   & 7.75780   &  \\ 
    $4$ &  1/5  &  11.05320 & \\ 
    $5$ &  1/6  &  14.54134  & \\ 
    $6$ &  1/7  &  18.19060 & \\ 
    $7$ &  1/9 &   21.99368 & \\ 
    $8$ &  1/10 &  25.90287 & \\ 
    $9$ &  1/10  &  29.90565 & \\ 
    $10$ &  2/21 & 34.01755  & \\ 
      \hline
  \end{tabular}
\end{center}

We are also able to obtain the bound $33.93473$ for $k=10$ and the same $\varepsilon$, if one optimizes over polynomials of the form $a+b  (1-P_1) + c (1-P_1)^2 + d (1-P_1)^3$ for $-1<a,b,c,d<1$. We observe that results provided by the $\varepsilon$-trick are considerably stronger than those listed in Table D for every $k\geq 3$. They surpass the currently known value of $\varrho_k$ in (\ref{twierdzenie_zasadnicze}) for $7 \leq k \leq 10$. Let us also notice that the bigger $k$ we take, the better improvement over Tables C and D we obtain. The reason for this is that the region $\mathcal{R}'_k$ is much larger than simplex $\mathcal{R}_k$ for small $k$, but the difference in size is far less spectacular for bigger values of $k$. In the same time, the epsilon-enlarged simplex $(1+\varepsilon)\cdot \mathcal{R}_k$ does not share this weakness.
 
 \begin{remark} It is possible to consider other choices of $\eta$ than $1+\varepsilon$. One of them is $(1+\varepsilon)k/(k-1)$, which gives an access to a larger domain $(1+\varepsilon) \cdot \mathcal{R}_k'$. However, expanding the sieve support so far makes the constaint (\ref{wiezy}) more restrictive. As for this moment, numerical experiments suggest that one loses more than wins by implementing such a manouver. The author also tried excluding the fragment
 \[   \{ (t_1, \dots , t_k) \in [0,+\infty )^k \colon \forall_{ i \in \{1, \dots , k \} } ~
 t_1 + \dots +t_{i-1} + t_{i+1} + \dots +  t_k > 1 - \varepsilon   \}, \] 
motivated by the fact, that it contibutes neither to $J_{i,\varepsilon} (F)$, nor the negative part of $Q_{i,\varepsilon} (F)$, and in the same time it contributes to $I (F)$. Unfortunately, this technique did not generate any substancial advantage.
\end{remark}


\section*{Lemmata}

We have the following lemma enabling us to convert certain sums into integrals. 

\begin{lemma}\label{sumynacalki}
Let $m \geq 1$ be a fixed integer and let $f \colon (0,+\infty)^m \rightarrow \bf{C}$ be a fixed compactly supported, Riemann integrable function. Then for $x>1$ we have
\[ \sum_{\substack{ p_1, \dots , p_m \\ p_1 \cdots p_m \sim x}} \, f \left( \log_x p_1 , \dots , \log_x p_m \right)  =
 \left( c_f  + o (1) \right) 
 \frac{x}{\log x},\]
where
\[c_f :=  \int_{\substack{  t_1+\dots+t_m=1}} f(t_1, \dots , t_m) \frac{dt_1 \dots dt_{m-1}}{t_1\cdots t_m}, \]
 where we lift Lebesgue measure $dt_1 \dots dt_{m-1}$ up to the hyperplane $t_1 + \cdots + t_m = 1$.
 \end{lemma}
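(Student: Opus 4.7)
My plan is to reduce the prime sum to an $m$-fold integral by the Prime Number Theorem and then change coordinates so that the constraint $p_1 \cdots p_m \sim x$ becomes the hyperplane $t_1 + \dots + t_m = 1$. Since $f$ is compactly supported in $(0,+\infty)^m$, I fix $\epsilon \in (0,1)$ and $M > 0$ with $\operatorname{supp} f \subseteq [\epsilon,M]^m$, so that every contributing prime satisfies $x^\epsilon \leq p_i \leq x^M$ and PNT applies uniformly with a power-saving error on these ranges. Since $f$ is Riemann integrable, I may sandwich it between continuous functions $f^\pm$ with $\int (f^+ - f^-)$ arbitrarily small; linearity of $f \mapsto c_f$ then reduces the problem to continuous $f$.

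I would write the sum as the Riemann--Stieltjes integral
\[ \mathcal{S}(x) = \int \cdots \int_{\substack{p_i \in [x^\epsilon, x^M] \\ x < p_1 \cdots p_m \leq 2x}} f(\log_x p_1, \dots, \log_x p_m) \, d\pi(p_1) \cdots d\pi(p_m), \]
substitute $d\pi(t) = dt / \log t + dE(t)$ from PNT, and change variables $p_i = x^{y_i}$, giving $dp_i / \log p_i = (x^{y_i}/y_i)\,dy_i$. The main term becomes
\[ \int_{\substack{y_i \in [\epsilon, M] \\ 1 < \sum_j y_j \leq 1 + \log_x 2}} \frac{f(y_1, \dots, y_m)}{y_1 \cdots y_m} \, x^{y_1 + \dots + y_m} \, dy_1 \cdots dy_m. \]
A unit-Jacobian change of variables $(t_1, \dots, t_{m-1}, u) = (y_1, \dots, y_{m-1}, \sum_j y_j)$ rewrites this as
\[ \int_1^{1 + \log_x 2} x^u \left( \int_{\substack{t_j \geq \epsilon \\ u - \sum_{j<m} t_j \geq \epsilon}} \frac{f(t_1, \dots, t_{m-1}, u - \sum_{j<m} t_j)}{t_1 \cdots t_{m-1} \bigl(u - \sum_{j<m} t_j\bigr)} \, dt_1 \cdots dt_{m-1} \right) du. \]
Since $u \in [1, 1 + \log_x 2]$ with $\log_x 2 \to 0$, uniform continuity of $f$ gives that the inner integral equals $c_f + o(1)$ uniformly in $u$, while the outer integral evaluates exactly to $\int_1^{1+\log_x 2} x^u\,du = (2x-x)/\log x = x/\log x$, and multiplying yields the desired asymptotic.

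The main obstacle is justifying the negligibility of the $dE$-contributions in the $m$-fold Stieltjes integral. The cleanest route is induction on $m$. The case $m=1$ is immediate from PNT applied to $(x,2x]$ combined with continuity of $f$ at $1$. For the inductive step I fix $p_1, \dots, p_{m-1}$ with $P = \prod_{i<m} p_i \leq x^{1-\epsilon}$; by PNT on the dyadic interval $(x/P, 2x/P]$, the inner sum over $p_m$ equals
\[ \bigl(f(t_1, \dots, t_{m-1}, t_m) + o(1)\bigr) \cdot \frac{x/P}{t_m \log x} \cdot (1 + o(1)) \]
uniformly in the remaining primes, where $t_i = \log_x p_i$ and $t_m := 1 - \log_x P$. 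The outer sum then becomes $(x/\log x) \sum_{p_1, \dots, p_{m-1}} \frac{f(t_1, \dots, t_m)/t_m}{p_1 \cdots p_{m-1}}$, and iterated application of the standard identity
\[ \sum_p \frac{g(\log_x p)}{p} = \int_0^\infty \frac{g(s)}{s}\,ds + o(1), \]
itself a consequence of PNT and partial summation (via the substitution $t = x^s$), produces exactly the integral defining $c_f$. This inductive route sidesteps the delicate error bookkeeping of the multivariate Stieltjes formulation while preserving the clean coordinate interpretation of $c_f$.
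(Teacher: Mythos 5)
Your proposal follows exactly the route the paper itself indicates — its entire proof is the one line ``follows from the prime number theorem combined with elementary properties of the Riemann integral'' — and your fleshed-out version of that is essentially sound: the change of variables $p_i=x^{y_i}$ with $dp/\log p = x^y\,dy/y$, the observation that $u=\sum y_i$ is pinned to $[1,1+\log_x 2]$ so the slice integral appears with the factor $\int_1^{1+\log_x 2}x^u\,du = x/\log x$, and the error handling by first summing over $p_m$ with PNT in the dyadic interval $(x/P,2x/P]$ (uniform since $x/P\geq x^{\epsilon}$ on the support) and then iterating the one-variable Mertens-type identity $\sum_p g(\log_x p)/p=\int g(s)\,ds/s+o(1)$ are all correct. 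Two caveats. First, the reduction to continuous $f$ by sandwiching with $\int(f^+-f^-)$ small is stated for the $m$-dimensional volume integral, but what you need small is $c_{f^+}-c_{f^-}$, an integral over the codimension-one slice $t_1+\cdots+t_m=1$; volume-smallness does not control the slice integral (a discrepancy concentrated in a thin slab around the hyperplane is volume-negligible but slice-large). This is really an imprecision inherited from the lemma's statement rather than a defect peculiar to your argument — taken literally for arbitrary Riemann integrable $f$ the lemma is false (e.g.\ $f$ an indicator supported on the hyperplane has zero sum but $c_f>0$) — and for the functions the paper actually feeds into the lemma (smooth factors times indicators of polytopes meeting the hyperplane transversally, with continuity at the relevant boundary points) the sandwiching must simply be performed for the induced function on, or on a thin neighbourhood of, the slice, after which your argument goes through unchanged. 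Second, a presentational point: what you call induction on $m$ never uses an inductive hypothesis; it is one application of PNT in a short dyadic range followed by an $(m-1)$-fold iteration of the one-variable identity, which is fine as it stands.
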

\begin{proof} Follows from prime number theorem combined with elementary properties of the Riemann integral.
\end{proof}

 We introduce an another useful lemma which helps us with discarding those $n \sim x$ having low prime factors. 

\begin{lemma}[Almost primality]\label{Almost primality}
 Let $k \geq 1$ be fixed, let $(L_1, \dots  , L_k)$ be a fixed admissible
$k$--tuple, and let $b \bmod W$ be such that $(L_i(b), W)=1$ for each $i = 1, \dots , k$.
Let further $F_1, \dots , F_k \colon [0,+\infty ) \rightarrow \mathbf{R}$ be fixed smooth compactly supported functions, and let
$m_1, \dots ,m_k \geq 0$ and $a_1, \dots , a_k \geq 1$ be fixed natural numbers. Then,
\[ \sum_{\substack{ n \sim x \\ n \equiv b \bmod W }} \prod_{j=1}^k
\left( \left| \lambda_{F_j} (L_j(n)) \right|^{a_j} \tau ( L_j (n) )^{m_j} \right) \ll B^{-k} \frac{x}{W}.\]
Furthermore, if $1 \leq j_0 \leq k$ is fixed and $p_0$ is a prime with $p_0 \leq x^{1/10k}$, then we have the
variant
\[ \sum_{\substack { n \sim x \\ n \equiv b \bmod W }} \prod_{j=1}^k 
\left( \left| \lambda_{F_j} (L_j (n)) \right|^{a_j} \tau (L_j (n) )^{m_j}  \right) \mathbf{1}_{p_0| L_{j_0} (n)} \ll
\frac{ \log_x p_0 }{p_0} B^{-k} \frac{x}{W}. \]
As a consequence, we have
\[ \sum_{\substack{ n \sim x \\ n \equiv b \bmod W }} \prod_{j=1}^k 
\left( \left| \lambda_{F_j} (L_j (n))\right|^{a_j} \tau (L_j (n) )^{m_j}  \right) \mathbf{1}_{ \textup{lpf}(L_{j_0} (n) ) \leq x^{\epsilon} } \ll
\epsilon B^{-k} \frac{x}{W}, \]
for any $\epsilon > 0$. 
\end{lemma}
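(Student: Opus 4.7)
The approach mirrors routine Selberg-sieve bookkeeping from \cite{MaynardK,Polymath8}. The plan is to expand every factor as a divisor sum, interchange orders of summation, and reduce to counting $n\sim x$ with $n\equiv b\bmod W$ subject to additional divisibility constraints. Concretely, write
\[|\lambda_{F_j}(L_j(n))|^{a_j}\;\le\;\sum_{d_{j,1},\dots,d_{j,a_j}\mid L_j(n)}\prod_{r=1}^{a_j}\bigl|F_j(\log_x d_{j,r})\bigr|\]
and $\tau(L_j(n))^{m_j}=\sum_{e_{j,1},\dots,e_{j,m_j}\mid L_j(n)}1$, then collect divisibility conditions on $L_j(n)$ into a single modulus $D_j=\mathrm{lcm}(d_{j,\bullet},e_{j,\bullet})$. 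Because $(L_i(b),W)=1$ and the tuple is admissible, CRT factorizes the inner count as $\#\{n\sim x:n\equiv b\bmod W,\;D_j\mid L_j(n)\ \forall j\}=\tfrac{x}{W\,D_1\cdots D_k}+O(1)$ with the usual local corrections at primes $p>D_0$. The compact support of each $F_j$ caps every $d_{j,r}$ by $x^{S(F_j)}$, so summing the $O(1)$ remainders is dominated by the main term; the main term assembles into an Euler product whose Mertens-type evaluation at primes $p>D_0$ yields the claimed $\ll B^{-k}\,x/W$.

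For the $p_0$-variant, I would simply adjoin the constraint $p_0\mid L_{j_0}(n)$ to the above CRT count, which multiplies the local density by an extra $1/p_0$. The remaining factor $\log_x p_0$ arises from tracking how $\lambda_{F_{j_0}}(L_{j_0}(n))$ behaves once $p_0\mid L_{j_0}(n)$ is imposed. Discarding the negligible contribution of $n$ with $p_0^2\mid L_{j_0}(n)$ (via Proposition~\ref{double_prime_factors}-type reasoning) and writing $L_{j_0}(n)=p_0\widetilde{L}$ with $p_0\nmid\widetilde{L}$, the divisor structure of $L_{j_0}(n)$ splits and gives
\[\lambda_{F_{j_0}}(L_{j_0}(n))\;=\;\sum_{d\mid\widetilde{L}}\mu(d)\bigl(F_{j_0}(\log_x d)-F_{j_0}(\log_x d+\log_x p_0)\bigr).\]
The mean value theorem applied to the smooth $F_{j_0}$ extracts a factor $\log_x p_0$ and replaces $F_{j_0}$ by $F_{j_0}'$ evaluated at a nearby point, yielding $|\lambda_{F_{j_0}}(L_{j_0}(n))|\ll\log_x p_0\cdot|\lambda^*(\widetilde{L})|$ for an auxiliary Selberg-type weight $\lambda^*$ of the same shape. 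Raising to the power $a_{j_0}\ge 1$ and using $\log_x p_0\le 1/(10k)$ produces at least one factor $\log_x p_0$; combining with the $1/p_0$ from the congruence count and applying the first part to the remaining factors yields the $\log_x p_0/p_0\cdot B^{-k}\,x/W$ bound. The consequence is then immediate from termwise summation and the Mertens estimate
\[\sum_{p_0\le x^{\epsilon}}\frac{\log_x p_0}{p_0}\;=\;\frac{1}{\log x}\sum_{p_0\le x^{\epsilon}}\frac{\log p_0}{p_0}\;=\;\epsilon+o(1).\]

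The main obstacle I anticipate is the handling of high powers $a_j\ge 3$: the trivial bound $|\lambda_{F_j}|\ll x^{o(1)}$ is too weak to afford as an $L^\infty$ companion to an $L^2$ estimate. Instead, one must work with the full $a_j$-fold divisor expansion and verify that the resulting multiplicative Euler product converges (with all loss absorbed into a constant depending only on $k$ and the $F_j$); this is standard once one observes that after pulling out the local factor at each prime $p>D_0$, the coefficients are bounded by $\tau_{a_j}(p)^{O(1)}/p$, whose Euler product is absolutely convergent. All constants remain uniform in $p_0$, which is what the final consequence requires.
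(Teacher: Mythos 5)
The paper does not actually reprove this lemma: its proof is a one-line citation, observing that the statement is a trivial modification of \cite[Proposition 4.2]{Polymath8} (with $n+h_i$ replaced by $L_i(n)$), so a self-contained argument has to reproduce the substance of that proof. Your sketch does not, because its first and foundational step fails. Bounding $|\lambda_{F_j}(L_j(n))|^{a_j}$ by the absolute-value expansion $\sum_{d_{j,1},\dots,d_{j,a_j}\mid L_j(n)}\prod_r|F_j(\log_x d_{j,r})|$ discards exactly the structure that produces the factor $B^{-k}$. After interchanging summation, each divisor variable coprime to $W$ contributes roughly $\sum_{(d,W)=1,\ d\le x^{O(1)}}|F_j(\log_x d)|/d\asymp \frac{\varphi(W)}{W}\log x=B$ (and the lcm-coupled variables contribute even more), so the ``main term'' of your expansion has size $\frac{x}{W}\log^{C}x$ with $C>0$, whereas the lemma asserts a bound $\frac{x}{W}\bigl(\frac{W}{\varphi(W)\log x}\bigr)^{k}$, i.e.\ a saving of a positive power of $\log x$. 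Your proposed rescue for large $a_j$ --- an ``absolutely convergent Euler product'' with local factors $1+O(\tau_{a_j}(p)^{O(1)}/p)$ --- is also incorrect: a product of such factors over all primes $p\le x^{O(1)}$ grows like a power of $\log x$, it does not converge, and no choice of bookkeeping recovers a negative power of $\log x$ once the Möbius signs have been dropped. The missing idea is to exploit the smoothness of $F_j$ together with the $\mu$-structure: for squarefree $m=p_1\cdots p_r$ one has $\lambda_{F}(m)=(-1)^r\partial_{\log_x p_1}\cdots\partial_{\log_x p_r}F(0)$, so each small prime factor $p$ of $L_j(n)$ costs a factor $O(\log_x p)$, and $\lambda_{F_j}(L_j(n))$ behaves like a sieve weight essentially concentrated on those $n$ for which $L_j(n)$ has no small prime factors --- a set of density $\asymp B^{-1}$ per form. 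That is how \cite[Proposition 4.2]{Polymath8} reaches $B^{-k}x/W$, and it is precisely the step your plan omits.

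The later parts of your sketch are in the right spirit and match the standard deduction: isolating $p_0$, using the mean value theorem to extract $\log_x p_0$ from $F_{j_0}(\log_x d)-F_{j_0}(\log_x d+\log_x p_0)$, the extra $1/p_0$ from the congruence, and the Mertens evaluation $\sum_{p_0\le x^{\epsilon}}\log_x p_0/p_0=\epsilon+o(1)$ for the final claim. But both variants are reduced in your plan to ``applying the first part to the remaining factors,'' so the gap above propagates to them; moreover the auxiliary weight $\lambda^{*}$ you introduce depends on $p_0$ and is not literally $\lambda_{G}$ for a single fixed smooth $G$, so some uniformity-in-$p_0$ care is still needed there. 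As written, the proposal does not establish the lemma.
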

\begin{proof}
This is a trivial modification of \cite[Proposition 4.2]{Polymath8}.
\end{proof}

\section{Proof of Propositions \ref{Powerful_Proposition} and \ref{double_prime_factors} }

Contraty to the numerical ordering, we tackle Proposition \ref{double_prime_factors} first, because it is going to be needed throughout the rest of this section.

 \subsection*{Propositon \ref{double_prime_factors}}
 
\begin{proof}
It suffices to show that 
 \begin{equation}\label{rozbicie_pierwsze}
  \sum_{p}  \, \sum_{ \substack{ n \sim x \\ n \equiv b \bmod W \\ p^2 | \mathcal{P}(n)  }} \tau ( \mathcal{P} (n))    
 \left| \prod_{i=1}^k  \lambda_{F_{i}} ( L_i (n))  \lambda_{G_{i}} ( L_i (n)) \, \right| \, = \, o(1) \times  B^{-k} \frac{x}{ W }. 
 \end{equation}
 Choose an $\epsilon >0$. We decompose the outer sum in (\ref{rozbicie_pierwsze}) as follows:
 \begin{equation}\label{decompos}
   \sum_{p}   ~ = ~
      \sum_{p \leq x^\epsilon} ~+~    \sum_{p > x^\epsilon }.
 \end{equation}
We apply the divisor bound $\tau (n) \ll n^{o(1)}$, valid for all $n \in \mathbf{N}$, to conclude that the second sum from the right-hand side of (\ref{decompos}) is
\begin{equation}
\ll x^{o(1)} \sum_{p > x^\epsilon} \sum_{\substack{ n \sim x \\ p^2 | \mathcal{P}(n) }} 1 \ll x^{1 - \epsilon + o(1)}.
\end{equation}
The first sum, by the third part of Lemma \ref{Almost primality}, can be easily estimated as being 
\[ \ll  ~ \epsilon B^{-k} \frac{x}{W} . \]
To this end, we only have to send $\epsilon \rightarrow 0$ sufficently slowly. 
\end{proof}

\subsection*{The trivial case of Proposition \ref{Powerful_Proposition}}

\begin{proof}
We shall take $i_0=k$, as the other cases can be proven exactly the same way. Proposition \ref{double_prime_factors} implies that our task is equivalent to showing that 
\begin{equation}\label{teza_triv_case}
  \sum_{\substack{ n \sim x \\ n \equiv b \bmod W   }}
   \sum_{p|L_k (n)} \Upsilon (\log_x p ) 
 \prod_{i=1}^k  \lambda_{F_{i}} ( L_i (n))  \lambda_{G_{i}} ( L_i (n)) =
  (c +o(1)) B^{-k} \frac{x}{W}.
  \end{equation}
  Interchanging the order of summation, we get that the left-hand side of (\ref{teza_triv_case}) equals
  
    \begin{equation}\label{<U_2}
   \sum_{p }  \Upsilon (\log_x p ) 
 \sum_{\substack{ d_1, \dots , d_k \\ e_1, \dots , e_k }} \left( \prod_{i=1}^k \mu (d_i) \mu (e_i) F_i ( \log_x d_i) G_i (\log_x e_i)  \right) 
 S_p (d_1, \dots , d_k, e_1, \dots , e_k) ,
  \end{equation}
 where 
 \begin{equation}\label{S_eps}
 S_p (d_1, \dots , d_k, e_1, \dots , e_k)  :=   \sum_{\substack{ n \sim x \\ n \equiv b \bmod W \\    \forall_i \, [d_i, e_i] | L_i (n) \\ p|L_k (n) }} 1.
 \end{equation}
 By hypotheses, all the $L_i (n)$ are coprime to $W$. We also assumed that for all distinct $i, \, j$ we have $| A_i B_j - A_j B_i | < D_0$. On the other hand, if there exists a prime $p_0$ dividing both $[d_i, e_i]$ and $[d_j, e_j]$, then $ A_i B_j - A_j B_i \equiv 0 \bmod p_0$, which forces $p_0 \leq D_0$. By this contradiction, we may further assume in this subsection that $W, \, [d_1, e_1],  \dots ,  [d_k,e_k]$ are pairwise coprime, because otherwise $S_p$ vanishes. We mark this extra constraint by the  $'$ sign next to the sum (see (\ref{prim_sum}) for an example). Under these assumptions, we can can merge the congruences appearing under the sum in (\ref{S_eps}) into one:
\begin{equation}
n \equiv a \bmod q ,
 \end{equation} 
 where 
 \begin{equation}
q  := W\,  [d_k,e_k,p] \prod_{i=1}^{k-1} [d_i, e_i]
 \end{equation}
and $(a,q)=1$. This gives
\begin{equation}\label{zamiana_modulow}
 S_p (d_1, \dots , d_k, e_1, \dots , e_k)  =   \sum_{\substack{ n \sim x  \\  n \equiv a \bmod q   }} 1 \, = \,
 \frac{x}{q} + O(1) .
\end{equation}
The net contribution of the $O(1)$ error term to (\ref{<U_2}) is at most 
\begin{equation}
\ll  \, \left( \sum_{d,e \leq x} \frac{1}{[d,e]} \right)^{k-1}  \sum_{\substack{ d,e,p \leq x  }} \frac{1}{[d,e,p]} \ll \left( \sum_{r \leq x} \frac{ \tau (r)^{O(1)} }{r} \right)^k \leq x^{o(1)}.
\end{equation}
Therefore, it suffices to show that 
\begin{equation}\label{prim_sum}
 \sum_{p } \frac{ \Upsilon (\log_x p ) }{p}
 \left(  \prod_{i=1}^{k}  \sideset{}{'}\sum_{d_i, e_i } \frac{ \mu (d_i) \mu (e_i) F_i ( \log_x d_i) G_i (\log_x e_i) }{ \psi_i( [d_i, e_i]) } \right)    = (c +o(1)) B^{-k} ,
\end{equation}
where
\begin{equation} \psi_i (n) := 
\begin{cases}
n, & \text{for } i \in \{1, \dots , k-1 \}  , \\
[n,p]/p , & \text{for } i=k.
\end{cases}
\end{equation}
By \cite[Lemma 2.2 and Lemma 2.6]{Lewulis} and the polarization argument we get 
\begin{equation}\label{lemma_40}
\prod_{i=1}^{k} \sideset{}{'}\sum_{d_i, e_i } \frac{ \mu (d_i) \mu (e_i) F_i ( \log_x d_i) G_i (\log_x e_i) }{ \psi_i( [d_i, e_i]) } =
     (c'c'' + o(1)) B^{-k} , 
\end{equation}
with
\begin{align}
c' &:=  \prod_{i=1}^{k-1} \int_0^1 F'_i (t) G'_i (t) \, dt, \\
c'' &:=  \int_0^{1- \log_x p }  \partial_{y} F'_k ( t)
\, \partial_{y} G'_k  ( t  )  \, dt \, dy. 
\end{align}
\begin{remark}
To justify this application, we need to consider (under the notation used within the cited work) 
\[ \lambda_{d_1 , \dots , d_k} := \prod_{i=1}^k \mu (d_i) \widetilde{F}_i (\log_x d_i) \]
in one case and 
\[ \lambda_{d_1 , \dots , d_k} := \prod_{i=1}^k \mu (d_i) \widetilde{G}_i (\log_x d_i) \]
in the other -- we are permitted to choose these weights arbitrarily due to \cite[Lemma 1.12]{Lewulis}. The key relationship in that paper between
 $\lambda_{d_1 , \dots , d_k} $ and $y_{r_1, \dots  r_k}$ may be established via \cite[(1.20) and Lemma 2.6]{Lewulis}. Then, from a simple formula 
\[ \widetilde{F}^2 - \widetilde{G}^2 = (  \widetilde{F} - \widetilde{G} ) ( \widetilde{F} + \widetilde{G} )  \] 
we deduce that after defining $\widetilde{F}$, $\widetilde{G}$ in such a way that  $F=\widetilde{F} - \widetilde{G}$ and $G=\widetilde{F} + \widetilde{G}$, and comparing the two mentioned choices of $\lambda_{d_1, \dots, d_k}$, our argument is completed.
\end{remark}
 The expression $1- \log_x p$ in the upper limit of the integral may seem a bit artificial. Its role is to unify this part of Proposition \ref{Powerful_Proposition} with the second one. 
Now, it suffices to show that 
\begin{equation}
 \sum_{p } \frac{ \Upsilon (\log_x p ) }{p}  \int_0^{1- \log_x p }  \partial_{y} F'_k ( t)
\, \partial_{y} G'_k  ( t  )  \, dt  = 
 \int_0^1 \frac{\Upsilon ( y)}{y}  \int_0^{1-y}  \partial_{y} F'_k ( t_k )
\, \partial_{y} G'_k  ( t_k )  \, dt_k \, dy .
\end{equation}
This is a direct application of Lemma \ref{sumynacalki}.
\end{proof}

\subsection*{The Elliott--Halberstam case of Proposition \ref{Powerful_Proposition}}
\begin{proof}

As in the previous subsection, we can take $i_0=k$ without loss of generality. Again, by Proposition \ref{double_prime_factors} we have to prove that 
\begin{equation}\label{zamiana_omeg_J0}
  \sum_{\substack{ n \sim x \\ n \equiv b \bmod W   }}
   \sum_{p|L_k (n)} \Upsilon (\log_x p ) 
 \prod_{i=1}^k  \lambda_{F_{i}} ( L_i (n))  \lambda_{G_{i}} ( L_i (n)) =
  (c +o(1)) B^{-k} \frac{x}{W}.
  \end{equation}
Take some $\epsilon > 0$. We decompose the studied sum as follows: 
\begin{equation}\label{epsilon_decomposition}
\sum_{ \substack{ n \sim x \\ n \equiv b \bmod W  }}  = 
 \sum_{\substack{ n \sim x \\ n \equiv b \bmod W  \\ \text{lpf} (L_k (n)) \leq x^{\epsilon}   }} + 
  \sum_{\substack{ n \sim x \\ n \equiv b \bmod W  \\ \text{lpf} (L_k (n)) > x^{\epsilon}  }} .
\end{equation}
We show that the contribution of the first sum from the right-hand side of (\ref{epsilon_decomposition}) is $\ll \epsilon B^{-k} x W^{-1}$.  To do so we bound
\begin{equation}\label{CS_Omega<U}
 \lambda_{F_{i}} ( L_i (n))   \lambda_{G_{i}} ( L_i (n)) \leq  \frac{1}{2} \left(  \lambda_{F_i} ( L_i (n))^2 + \lambda_{G_{i}} ( L_i (n))^2 \right)
\end{equation}
for each $i=1, \dots , k$. We also recall the trivial inequality
\begin{equation}\label{TrivOmega<U}
 \sum_{p|L_k (n)} \Upsilon (\log_x p )  \ll \tau (L_k (n)). 
\end{equation}
By (\ref{CS_Omega<U}) and (\ref{TrivOmega<U}) we can present the first sum from the right-hand side of (\ref{epsilon_decomposition}) as a linear combination of sums that can be threated straightforwardly by Lemma \ref{Almost primality}.

Let us define a function
\[ \Omega^\flat (n) :=  \sum_{\substack{p|n \\ p>x^\epsilon }}  \Upsilon (\log_x p )   \]
Now, it sufficies to show that for any $\epsilon >0 $ we have
\begin{equation}\label{<U_1p}
  \sum_{\substack{ n \sim x \\ n \equiv b \bmod W \\   \text{lpf} (L_k (n)) > x^{\epsilon}  }}
   \Omega^\flat ( L_k (n) )    
 \prod_{i=1}^k  \lambda_{F_{i}} ( L_i (n))  \lambda_{G_{i}} ( L_i (n)) =
  (c_\epsilon +o(1)) B^{-k} \frac{x}{W},
  \end{equation}
  where $c_\epsilon \rightarrow c$ when $\epsilon \rightarrow 0$. After expanding the $\lambda_{F_i}, \, \lambda_{G_i}$ we conclude that the left-hand side of (\ref{<U_1p}) equals
  \begin{equation}
 \sum_{\substack{ d_1, \dots , d_{k-1} \\ e_1, \dots , e_{k-1} }} \left( \prod_{i=1}^{k-1} \mu (d_i) \mu (e_i) F_i ( \log_x d_i) G_i (\log_x e_i)  \right) 
 S_{\epsilon} (d_1, \dots , d_{k-1}, e_1, \dots , e_{k-1}) ,
  \end{equation}
 where 
 \begin{equation}\label{S_eps2}
S_{\epsilon } (d_1, \dots , d_{k-1}, e_1, \dots , e_{k-1})  :=   \sum_{\substack{ n \sim x \\ n \equiv b \bmod W \\   \text{lpf} (L_k (n)) > x^{\epsilon} \\ \forall_{i\not= k} \, [d_i, e_i] | L_i (n)  }} \Omega^\flat ( L_k (n) )  \, \lambda_{F_{k}} ( L_k (n))  \, \lambda_{G_{k}}  ( L_k (n)) .
 \end{equation}
 Notice that $n \equiv b \bmod W$  implies that all of the $L_i (n)$ are coprime to $W$. We also assumed that for all distinct $i, \, j$ we have $| A_i B_j - A_j B_i | < D_0$, so if there exists a prime $p_0$ dividing both $[d_i, e_i]$ and $[d_j, e_j]$, then 
$ A_i B_j - A_j B_i \equiv 0 \bmod p_0,$
  which forces $p_0 \leq D_0$. That is a contradiction. Therefore, we may further assume in this subsection that $W, \, [d_1, e_1],  \dots ,  [d_k,e_k]$ are pairwise coprime and that $\text{lpf} \, ([d_k, e_k]) > x^\epsilon$, because otherwise $S_{\epsilon}$ vanishes. Under these assumptions we can merge all the congruences under the sum (\ref{S_eps2}) into two: 
\begin{equation}
n \equiv a \bmod q \, , ~~~~L_k(n) \equiv 0 \bmod [d_k, e_k, p], 
 \end{equation} 
 where we redefine $q$ and $a$ as
 \begin{equation}
q := W \prod_{i=1}^{k-1} [d_i, e_i],
 \end{equation}
and $a$ being some residue class coprime to its modulus such that $(L_i( a ), W)=1$ for each possible choice of index $i$. This gives
\begin{equation}\label{zamiana_modulow2}
S_{\epsilon} (d_1, \dots , d_{k-1}, e_1, \dots , e_{k-1})  =   \sum_{\substack{ n \sim x \\   \text{lpf} (L_k (n)) > x^{\epsilon} \\n \equiv a \bmod q   }}  \Omega^\flat ( L_k (n) )  \, \lambda_{F_{k}} ( L_k (n))  \, \lambda_{G_{k}}  ( L_k (n)) .
\end{equation}
We would like to perform a substitution $m:=L_k (n)$ in the sum from (\ref{zamiana_modulow2}), so we have to transform the congruence $n \equiv a \bmod q$ appropriately. In order to do so, we split it into two: $n \equiv a \bmod [A_k,q]/A_k$ and $n \equiv a \bmod \mbox{rad} \,A_k$, where $\mbox{rad} \,A_k$ denotes the square-free part of $A_k$. The former congruence is simply equivalent to $m \equiv L_k (a) \bmod [A_k,q]/A_k$. The latter is equivalent to $m \equiv L_k (a) \bmod A_k \, \text{rad} \,A_k$ and it also implies $m \equiv B_k \bmod A_k$, which has to be satisfied by our substitution. Note that
 \begin{equation}
 ( L_k (a) , [A_k,q]/A_k)=( L_k (a) , A_k \, \mbox{rad} A_k)=1,
\end{equation}
so we can combine the two considered congruences into one $m \equiv a' \bmod [A_k, q] \, \mbox{rad} A_k$. Hence,
\begin{equation}
 S_{\epsilon} (d_1, \dots , d_{k-1}, e_1, \dots , e_{k-1})   =  \sum_{\substack{ A_k x + B_k < m \leq 2A_k x  + B_k \\   \text{lpf} (m) > x^{\epsilon} \\m \equiv a' \bmod q'  }}  \Omega^\flat ( m )  \, \lambda_{F_{k}} ( m ) \,  \lambda_{G_{k}}  ( m ) ,
\end{equation}
where $q':=  [A_k, q] \, \mbox{rad} \, A_k = q A_k $ and $a'$ is a residue class $\bmod \, q$ coprime to its modulus. Thus, we have
\begin{multline}
  S_{\epsilon} (d_1, \dots , d_{k-1}, e_1, \dots , e_{k-1})   =  \frac{1}{\varphi (q')}   \sum_{\substack{ A_k x + B_k < m \leq 2A_k x  + B_k \\ (m,q')=1 }} \Omega^\flat (m)  \lambda_{F_{k}} ( m )  \lambda_{G_{k}}  ( m )  \mathbf{1}_{\text{lpf} (m) > x^{\epsilon}}\, \\
+ \Delta \left(  \Omega^\flat \lambda_{F_{k}}   \lambda_{G_{k}}   \mathbf{1}_{\text{lpf} (\cdot )> x^{\epsilon}}
\mathbf{1}_{[A_kx + B_k ,2A_kx + B_k]}; a' \bmod q' \right).
\end{multline}
We split
\begin{equation}
\sum_p S_{\epsilon} = S_1 - S_2 + S_3,
\end{equation}
where
\begin{equation}\label{Sumy_S}
\begin{split}
S_1 (d_1, \dots , d_{k-1}, e_1, \dots , e_{k-1})  &= \frac{1}{\varphi (q')} \sum_p \Upsilon (\log_x p) \sum_{\substack{ A_k x + B_k < m \leq 2A_k x  + B_k \\ p|m }}  \lambda_{F_{k}} ( m )  \lambda_{G_{k}}  ( m )  \mathbf{1}_{\text{lpf} (m) > x^{\epsilon}} , \\  
S_2 (d_1, \dots , d_{k-1}, e_1, \dots , e_{k-1}) &= \frac{1}{\varphi (q')}   \sum_{\substack{ A_k x + B_k < m \leq 2A_k x  + B_k \\ (m,q')>1 }} \Omega^\flat (m)  \lambda_{F_{k}} ( m )  \lambda_{G_{k}}  ( m )  \mathbf{1}_{\text{lpf} (m) > x^{\epsilon}} , \\
S_3(d_1, \dots , d_{k-1}, e_1, \dots , e_{k-1})  &= \Delta \left(  \Omega^\flat  \lambda_{F_{k}}   \lambda_{G_{k}}   \mathbf{1}_{\text{lpf} (\cdot )> x^{\epsilon}} \mathbf{1}_{[A_kx + B_k ,2A_kx + B_k]}; a' \bmod q' \right).
\end{split}
\end{equation}
For $j \in \{ 1,2,3 \}$ we put
\begin{equation}
\Sigma_j = 
 \sum_{\substack{ d_1, \dots , d_{k-1} \\ e_1, \dots , e_{k-1} }} \left( \prod_{i=1}^{k-1} \mu (d_i) \mu (e_i) F_i ( \log_x d_i) G_i (\log_x e_i)  \right) 
 S_j (d_1, \dots , d_{k-1}, e_1, \dots , e_{k-1}).
\end{equation}
Therefore, it suffices to derive the main term estimate
\begin{equation}\label{Sigma1}
\Sigma_1 = (c_\epsilon + o(1)) B^{-k} \frac{x}{ W }, \\
\end{equation}
the `correction' error term estimate
\begin{equation}\label{Sigma2}
\Sigma_2 \ll x^{1-\epsilon+o(1)}, \\
\end{equation}
and the `GEH-type' error term estimate
\begin{equation}\label{Sigma3}
\Sigma_3 \ll x \log^{-A} x
\end{equation}
for any fixed $A>0$. 

Let us begin with (\ref{Sigma2}). We observe that since $\text{lpf}(m)>x^\epsilon$, there exists a prime $x^\epsilon < p \leq x$ dividing both $m$ and one of $d_1, e_1, \dots ,d_{k-1}, e_{k-1}$ (if $k=1$, then $\Sigma_2$ vanishes; we also claim that $\epsilon$ tends to 0 slowly enough to ensure that $D_0 < x^\epsilon$). Thus, we may safely assume that $p|d_1$, for the remaining $2k-3$ cases are analogous. Hence, we get
\begin{equation}
\Sigma_2 \ll x^{o(1)} \sum_{x^\epsilon < p \leq x}  \sum_{\substack{ d_1, \dots , d_{k-1} \leq x \\ e_1, \dots , e_{k-1} \leq x \\ p|d_1}} 
\prod_{i=1}^{k-1} \frac{1}{\varphi( [d_i,e_i] ) } ~  \sum_{\substack{ n \ll x \\ p|n}}  1 \ll x^{1+o(1)}  \sum_{x^\epsilon < p \leq x} \frac{1}{p^2} \ll x^{1-\epsilon + o(1)}.
\end{equation}

To deal with (\ref{Sigma3}) we just repeat the reasoning from \cite[Subsection `The generalized Elliott-Halberstam case', Eq (62)]{Polymath8} combined with $\Omega^\flat (m) = O(1/\epsilon)$.

Let us move to (\ref{Sigma1}). We have 
\[ \varphi (q') = A_k \varphi \left( W \prod_{i=1}^{k-1} [d_i,e_i] \right),\]
 so again by \cite[Lemma 2.6]{Lewulis} (or \cite[Lemma 4.1]{Polymath8} for an even more direct application) we get
\begin{equation}
 \sideset{}{'}\sum_{\substack{ d_1, \dots , d_{k-1} \\ e_1, \dots , e_{k-1} }} \frac{ \prod_{i=1}^{k-1} \mu (d_i) \mu (e_i) F_i ( \log_x d_i) G_i (\log_x e_i)   }{\varphi \left( q' \right) } =  \frac{A_k^{-1}}{\varphi (W)} (c' + o(1)) B^{1-k}, 
\end{equation}
where 
\[ c' :=  \prod_{i=1}^{k-1} \int_0^1 F'_i (t) G'_i (t) \, dt. \]
By (\ref{Sumy_S}) it suffices to show that 
\begin{equation}\label{Cebis}
\sum_p \, \Upsilon (\log_x p)  \sum_{\substack{ A_k x + B_k < m \leq 2A_k x  + B_k \\ p|m }} \lambda_{F_{k}} ( m )  \lambda_{G_{k}}  ( m )  \mathbf{1}_{\text{lpf} (m) > x^{\epsilon}} = \left( c_\epsilon''  + o(1) \right) \frac{A_k  x}{\log x} ,
\end{equation}
where $c_\epsilon''$ satisfies 
\begin{equation}
\lim_{\epsilon \rightarrow 0} c_\epsilon'' =  \Upsilon(1) \, F_k (0) \,   G_k (0) ~+~ \int_0^1 \frac{\Upsilon (y)}{y}  \int_0^{1-y}  \partial_{y} F'_k ( t)
\, \partial_{y} G'_k  ( t  )  \, dt \, dy.
\end{equation}
We simplify the restriction $ A_k x + B_k < m \leq 2A_k x  + B_k $ into $m \sim A_kx$ at the cost of introducing to the left-hand side of (\ref{Cebis}) an error term of size not greater than $x^{o(1)}$. We factorize $m = p_1 \cdots p_r p$ for some $x^\epsilon \leq p_1 \leq \dots \leq p_r \leq 2A_kx  $, $p \geq x^\epsilon$, and $0 \leq r \leq \frac{1}{\epsilon} $. The contribution of those $m$ having repeated prime factors is readily $\ll x^{1-\epsilon}$, so we can safely assume that $m$ is square-free. In such a case, we get
\begin{equation}
 \lambda_{F_{k}} ( m )  = (-1)^r    \partial_{\, \log p_1} \dots \partial_{\, \log p_r}  ( \partial_{\, \log p } F_k (0) ) 
\end{equation}
and an analogous equation for $\lambda_{G_k}(m)$. Therefore, the left-hand side of (\ref{Cebis}) equals
\begin{equation}\label{suma_lambdy}
 \sum_{0 \leq r \leq \frac{1}{\epsilon } } \, \sum_p \Upsilon (\log_x p)  \sum_{\substack{ x^\epsilon < p_1 < \dots < p_r \\ p_1\dots p_r p \, \sim A_k x}}
    \partial_{\, \log p_1} \dots \partial_{\, \log p_r} ( \partial_{\, \log p } F_k (0) ) \, \cdot \,
  \partial_{\, \log p_1} \dots \partial_{\, \log p_r} ( \partial_{\, \log p } G_k (0) ) .
\end{equation}
Note that for the index $r=0$ the summand above equals
\begin{equation}\label{req0}
( \Upsilon (1) + o(1))  \sum_{p \sim A_k x} \,   F_k (0) \,   G_k (0).
\end{equation}
We apply Lemma \ref{sumynacalki} to (\ref{suma_lambdy}--\ref{req0}) and obtain an asymptotic (\ref{Cebis}) with
\begin{equation}\label{wynik_przed_przejsciem}
\begin{split}
c_\epsilon'' = \sum_{1\leq r \leq \frac{1}{\epsilon}}  \int_0^1 \Upsilon (y) \int_{\substack{  \phantom{2} \\ t_1+\dots+t_r=1-y \\ \epsilon < t_1 < \dots < t_r}}
 \partial_{t_1} \dots \partial_{ t_r} ( \partial_{ y } F_k (0) ) \cdot \partial_{t_1} \dots \partial_{ t_r} ( \partial_{ y } G_k (0) )
  \frac{ dy \,dt_1 \dots dt_{r-1}}{y \, t_1\cdots t_r} \\
+ ~ \Upsilon (1) \, F_k (0) \, G_k (0)  .
  \end{split}
\end{equation}
The first part of Lemma \ref{Almost primality} gives us $c_\epsilon'' \ll 1$ when $\epsilon \rightarrow 0^+$. Now, consider any  sequence of positive numbers $( \epsilon_1, \epsilon_2, \dots)$ satisfying $\epsilon_n \rightarrow 0$  as $n \rightarrow \infty$. In view of (\ref{Cebis}) and the last part of Lemma \ref{Almost primality}, we conclude that $\left( c_{\epsilon_1}'', c_{\epsilon_2}'' \dots \right)$ forms a Cauchy sequence, and hence it has a limit. Thus, by dominated convergence theorem it suffices to establish for each $y \in [0,1]$ the following equality
\begin{multline}
 \sum_{r \geq 1} \int_{\substack{ \phantom{2} \\ t_1+\dots+t_r=1-y \\ 0 < t_1 < \dots < t_r }}
 \partial_{t_1} \dots \partial_{ t_r} ( \partial_{ y } F_k (0) ) \cdot \partial_{t_1} \dots \partial_{ t_r} ( \partial_{ y } G_k (0) )
  \frac{ dt_1 \dots dt_{r-1}}{ t_1\cdots t_r}   \\
  =   \int_0^{1-y}   \partial_{y} F_k' ( t) \, \partial_{y} G_k' ( t)   \, dt.
  \end{multline}
By depolarization argument it suffices to show that for each $y \in [0,1] $, we have
\begin{equation}\label{po_depolaryzacji}
 \sum_{ r \geq 1 }  \int_{\substack{ \phantom{2} \\ t_1+\dots+t_r=1-y \\ 0 < t_1 < \dots < t_r }}
\left| \partial_{t_1} \dots \partial_{ t_r} ( \partial_{ y } F (0) ) \right|^2
  \frac{ \,dt_1 \dots dt_{r-1}}{ t_1\cdots t_r} =
   \int_0^{1-y} \left| \partial_{y} F' ( t) \right|^2 \, dt 
\end{equation}
for any smooth $F \colon [0,\infty) \rightarrow \mathbf{R}$. For the sake of clarity, we relabel $\partial_{ y } F (x) $ as $H(x)$. We substitute 
$u := t/(1-y)$ and $u_i := t_i/(1-y)$ for all possible choices of $i$. With these settings (\ref{po_depolaryzacji}) is equivalent to 
 \begin{equation}
\sum_{ r \geq 1 }  \int_{\substack{ \phantom{2} \\ u_1 + \dots + u_r =1 \\ 0 < u_1 < \dots < u_r }}
\left| \partial_{(1-y)u_1} \dots \partial_{ (1-y)u_r} H(0) \right|^2
  \frac{ \,du_1 \dots du_{r-1}}{ u_1\cdots u_r} =
   (1-y)^2 \int_0^{1} \left| H'(u(1-y)) \right|^2 \, du .
 \end{equation}
Note that one of the $(1-y)$ appeared from transforming $t_r \mapsto u_r$. Put $\widetilde{H}(x):=H(x(1-y))$. We get 
\[ \partial_{(1-y)u_1} \dots \partial_{ (1-y)u_r} H(0) =  \partial_{u_1} \dots \partial_{ u_r} \widetilde{H}(0),\]
 and $\widetilde{H}'(x)=(1-y)H'(x(1-y))$ by the chain rule. Thus, it suffices to show that
 \begin{equation}
 \sum_{ r \geq 1 }  \int_{\substack{ \phantom{2} \\ u_1 + \dots + u_r =1 \\ 0 < u_1 < \dots < u_r }}
\left| \partial_{u_1} \dots \partial_{ u_r} \widetilde{H}(0) \right|^2
  \frac{ \,du_1 \dots du_{r-1}}{ u_1\cdots u_r} =
 \int_0^{1} \left| \widetilde{H}'(u) \right|^2 \, du .
 \end{equation}
 To this end, we apply the key combinatorial identity \cite[(67)]{Polymath8}. 
 \end{proof}


\section{Proof of Theorem \ref{eps_simplex_sieving} }\label{proof_theo}

\begin{proof}
Let $k, m, \varepsilon, \theta, \ell$ be as in Theorem \ref{eps_simplex_sieving}. Let us assume that we have a non-zero square integrable function $F \colon [0,+\infty )^k \rightarrow \mathbf{R}$ supported on $(1+\varepsilon ) \cdot \mathcal{R}_k' \cap \, \eta \cdot \mathcal{R}_k$ and satisfying
\begin{equation}
 \frac{ \sum_{i=1}^k \left( Q_{i,\varepsilon}  (F)   - \theta(\ell -1 ) J_{i,\varepsilon} (F) \right) }{I(F)} +\ell k  < m.
\end{equation}
Now, we perform an analogous sequence of simplifications as in \cite[(72--84)]{Polymath8} and eventually arrive at a non-zero  smooth function $f \colon \mathbf{R}^k \rightarrow \mathbf{R}$ being the linear combination of tensor products -- namely
\begin{equation}
f (t_1, \dots , t_k) = \sum_{j=1}^J c_j f_{1,j} (t_1) \cdots f_{k,j} (t_j)
\end{equation} 
with $J$, $c_j$, $f_{i,j}$ fixed, for which all the components $ f_{1,j} (t_1) , \dots , f_{k,j} (t_k)$ are supported on the region
\begin{multline}
\left\{  (t_1, \dots , t_k) \in \mathbf{R}^k \colon \sum_{i=1}^k \max \left(  t_i , \delta \right) \leq \theta  \eta - \delta  \right\} \\ \cap 
\left\{  (t_1, \dots , t_k) \in \mathbf{R}^k \colon \forall_{1 \leq i_0 \leq k}
\sum_{\substack{1 \leq i \leq k \\ i \not= i_0}} \max \left(  t_i , \delta \right) \leq (1+ \varepsilon) \theta  - \delta  \right\}
\end{multline}
for some sufficently small $\delta >0$ -- that obeys
\begin{equation}\label{criterion_satisfied}
 \frac{ \sum_{i=1}^k \left( \widetilde{Q}_{i,\varepsilon}  (f)   - (\ell -1 ) \widetilde{J}_{i,\varepsilon} (f) \right) }{ \widetilde{I}(f)} +\ell k  < m,
\end{equation}
where 
\begin{align}
\widetilde{I} (f) :=& \int \limits_{[0, +\infty)^k} \left| \frac{\partial^k}{\partial t_1 \dots \partial t_k } 
f(t_1, \dots, t_k) \right|^2 dt_1 \dots   dt_k, \\
\widetilde{J}_{i,\varepsilon} (f) :=& \int \limits_{(1-\varepsilon)\theta \cdot \mathcal{R}_{k-1} } 
\left| \frac{\partial^{k-1}}{\partial t_1 \dots \partial t_{i-1} \partial t_{i+1} \dots  \partial t_k } 
f(t_1, \dots, t_{i-1}, 0 , t_{i+1} , \dots  ,  t_k) \right|^2 dt_1 \dots   dt_{i-1} dt_{i+1} \dots   dt_{k}  , \nonumber  \\
\widetilde{Q}_{i,\varepsilon} (f) :=& \int \limits_0^1 \frac{1- \ell y}{y}  \int \limits_{ {\Psi} (y) \cdot \mathcal{R}_{k-1} } \left( \int \limits_0^{1-y} \left| \partial_y^{(i)} \frac{\partial^k}{\partial t_1 \dots \partial t_k } 
f(t_1, \dots, t_k) \right|^2 dt_i \right)  dt_1 \dots   dt_{i-1} \, dt_{i+1} \dots   dt_k  \, dy, \nonumber
\end{align}
with $\Psi \colon [0,+\infty ) \rightarrow \mathbf{R}$ being a function given as
\begin{equation}
\Psi (y) := 
\begin{cases}
1+ \varepsilon, & \text{for } y \in \left[ 0 ,  \frac{1}{\ell } \right) , \\
1 - \varepsilon, & \text{for } y \in \left[ \frac{1}{\ell }  , 1 \right] , \\
0, & \text{otherwise.}
\end{cases}
\end{equation}
We construct a non-negative sieve weight $\nu \colon \mathbf{N} \rightarrow \mathbf{Z}$ by the formula
\begin{equation}\label{tensoring}
\nu (n) := \left( \sum_{j=1}^J c_j   \lambda_{f_{1,j}} ( L_1 (n)) \cdots \lambda_{f_{k,j}} ( L_k (n)) \right)^2.
\end{equation}
Notice that if $\varepsilon >0$, then for any $1 \leq j,j' \leq J$ we have
\begin{equation}
\sum_{i=1}^k (S(f_{i,j}) + S(f_{i,j'}))  < 2 \theta \eta < 1
\end{equation}
from the $2 \theta \eta + \frac{1}{\ell} \leq 1$ assertion. On the flip side, if $\varepsilon = 0$, then $\text{supp} (F) \subset \mathcal{R}_k'$ and consequently for every $1 \leq i_0 \leq k$ we have
\begin{equation}
\sum_{\substack{ 1 \leq i \leq k \\ i \not= i_0}} (S(f_{i,j}) + S(f_{i,j'}))  < 2 \theta .
\end{equation}
Applying results from \cite[Subsection `Proof of Theorem 3.12']{Polymath8}, we get
\begin{equation}
 \sum_{ \substack{ n \sim x \\ n \equiv b \bmod W }}   \nu (n) = \left( \alpha + o(1) \right) B^{-k} \frac{x}{W},
\end{equation}
where
\[ \alpha = \widetilde{I} (f). \]

Now, let us consider the sum
\begin{equation}
 \sum_{ \substack{ n \sim x \\ n \equiv b \bmod W \\ \mathcal{P}(n) \textup{ sq-free} }}     \nu (n)
 \sum_{p | L_k (n) } \left( 1 - \ell \log_x p \right)  .
\end{equation}
We can expand the sum above as a linear combination of expressions
\begin{equation}\label{rozpad}
 \sum_{ \substack{ n \sim x \\ n \equiv b \bmod W \\ \mathcal{P}(n) \textup{ sq-free} }}   
 \sum_{p | L_k (n) } \left( 1 - \ell \log_x p \right)   \prod_{i=1}^k \lambda_{f_{i,j}} ( L_i (n)) \lambda_{f_{i,j'}} ( L_i (n))
\end{equation} 
for various $1 \leq j,j' \leq J$. We seek for the upper bound of the sum (\ref{rozpad}). We can achieve this goal by applying Proposition \ref{Powerful_Proposition}. We also observe that the first part of this result should be more effective for smaller values of $p$, and the second part for larger values of $p$. Therefore, we perform a decomposition of the expression (\ref{rozpad}) as follows:
\begin{equation}
 \sum_{ \substack{ n \sim x \\ n \equiv b \bmod W \\ \mathcal{P}(n) \textup{ sq-free} }}   
 \sum_{p | L_k (n) } = 
  \sum_{ \substack{ n \sim x \\ n \equiv b \bmod W \\ \mathcal{P}(n) \textup{ sq-free} }}   
\left( \sum_{\substack{p | L_k (n) \\ p \leq x^{1/\ell} }} + \sum_{\substack{p | L_k (n) \\ p > x^{1/\ell} }} \right).
\end{equation}
For the $p \leq x^\ell$ sum we apply the trivial case of Proposition \ref{Powerful_Proposition} with $\vartheta_0 = 1/\ell$ and 
\[ \Upsilon (y) = (1- \ell y)\mathbf{1}_{y\leq 1/\ell }. \]
Under these assumptions we have 
\begin{align}\label{nasze_warunki}
\sum_{i=1}^k \left( S(f_{i,j}) + S(f_{i,j'}) \right) &< 2\theta (1+ \varepsilon) \leq 1 - \frac{1}{\ell},   \\
S ( \Upsilon ) &\leq  \frac{1}{\ell} ,
\end{align}
so the necessary hypotheses from the `trivial case' of Proposition \ref{Powerful_Proposition} are indeed satisfied. Observe that under $\varepsilon =0$ the inequality (\ref{nasze_warunki}) satisfies the second case of Proposition \ref{Powerful_Proposition}, so in these circumstances we do not have to rely on the constraint (\ref{wiezy}) any longer. Thus, we get
\begin{equation}\label{wynik_epsilon_beta1}
 \sum_{ \substack{ n \sim x \\ n \equiv b \bmod W \\ \mathcal{P}(n) \textup{ sq-free} }}     \nu (n)
 \sum_{\substack{p | L_i (n) \\ p \leq x^{1/\ell} }} \left( 1 - \ell \log_x p \right)  = \left( \beta_k^{(1)} + \, o(1) \right) B^{-k} \frac{x}{W},
\end{equation} 
where
\begin{equation}
\begin{split}
\beta_k^{(1)} =  
 \sum_{j,j'=1}^J c_j c_{j'}  \left(   \int    \limits_0^1 \frac{\Upsilon ( y)}{y}  \int  \limits_0^{1-y}  \partial_{y} f_{k,j}'(t_k) 
\, \partial_{y} f_{k,j'}'(t_k)  \, dt_{k} \, dy \right) 
 \prod_{i=1}^{k-1} \left(  \int \limits_0^1 f_{k,j}'(t_i) \, f_{k,j'}'(t_i) \, dt_i \right) .
\end{split}
\end{equation} 
From (\ref{tensoring}) we see that $\beta_k^{(1)}$ factorizes as 
\begin{equation}
\beta_k^{(1)} = 
 \int \limits_0^{1/\ell} \frac{1- \ell y}{y}  \int \limits_{ (1+\varepsilon) \theta \cdot \mathcal{R}^{k-1}  }  \int \limits_0^{1-y} \left| \partial_y^{(k)} \frac{\partial^k}{\partial t_1 \dots \partial t_k } 
f(t_1, \dots, t_k) \right|^2 dt_i \, dt_1 \dots    dt_{k-1}  \, dy.
\end{equation}

Now we deal with the $p> x^\ell$ case. We apply the $GEH$ case of Proposition \ref{Powerful_Proposition} with $\vartheta = 1/2$ and 
\[ \Upsilon (y) = (1- \ell y)\mathbf{1}_{y> 1/\ell }. \]
We decompose $\{1, \dots , J\}$ into $\mathcal{J}_1 \cup \mathcal{J}_2$, where $\mathcal{J}_1$ consists of those indices $j  \in \{1, \dots , J\}$ satisfying
\begin{equation}\label{epsilon_trick_support}
\sum_{i=1}^{k-1} S(f_{i,j})  < (1 - \varepsilon ) \theta,
\end{equation}
and $\mathcal{J}_2$ is the complement. As in \cite{Polymath8} we apply the elementary inequality
\[ (x_1 + x_2)^2  \geq  (x_1 + 2x_2)x_1 \]
to obtain the pointwise lower bound
\begin{equation}\label{lower_bound_epsilon}
\begin{split}
\nu (n) \geq \left( \left( \sum_{j \in \mathcal{J}_1} + ~ 2 \sum_{j \in \mathcal{J}_2}  \right)
c_j \lambda_{f_{1,j}} (L_1 (n)) \cdots \lambda_{f_{k,j}} (L_k (n)) \right)
\left(  \sum_{j' \in \mathcal{J}_1}
c_{j'} \lambda_{f_{1,j'}} (L_1 (n)) \cdots \lambda_{f_{k,j'}} (L_k (n)) \right).
\end{split}
\end{equation}
Therefore, if $j \in \mathcal{J}_1 \cup \mathcal{J}_2$ and $ j' \in \mathcal{J}_1$, then from (\ref{epsilon_trick_support}) one has 
\[ \sum_{i=1}^{k-1} \left( S(f_{i,j}) + S(f_{i,j'}) \right) < 2  \theta,  \]
so the hypothesis from the `Generalised Elliott--Halberstam' case of Proposition \ref{Powerful_Proposition} is indeed satisfied. Thus, by Proposition \ref{Powerful_Proposition} and (\ref{lower_bound_epsilon}) we get
\begin{equation}
 \sum_{ \substack{ n \sim x \\ n \equiv b \bmod W \\ \mathcal{P}(n) \textup{ sq-free} }}     \nu (n)
 \sum_{\substack{p | L_i (n) \\ p > x^{1/\ell} }} \left( 1 - \ell \log_x p \right)  \leq \left( \beta_k^{(2)} + \, o(1) \right) B^{-k} \frac{x}{W},
\end{equation} 
where
\begin{equation}
\begin{split}
\beta_k^{(2)} =    
 \left( \sum_{j \in \mathcal{J}_1} + ~ 2 \sum_{j \in \mathcal{J}_2}  \right)  \sum_{j' \in \mathcal{J}_1}
 c_j c_{j'}  \left( \Upsilon (1)  \, f_{k,j}(0) \, f_{k,j'}(0) +   \int    \limits_0^1 \frac{\Upsilon ( y)}{y}  \int  \limits_0^{1-y}  \partial_{y} f_{k,j}'(t_k) 
\, \partial_{y} f_{k,j'}'(t_k)  \, dt_{k} \, dy  \right) \\ 
\times  \, \prod_{i=1}^{k-1} \left(  \int \limits_0^1 f_{k,j}'(t_i) \, f_{k,j'}'(t_i) \, dt_i \right) .
\end{split}
\end{equation} 
For $s = 1,2$ let us define
\[f_s (t_1, \dots , t_k ):= \sum_{j \in \mathcal{J}_s} c_j f_{1,j} (t_1) \cdots f_{k,j} (t_k). \]
From (\ref{tensoring}) we observe that $\beta_k^{(2)}$ can be factorized as
\begin{equation}
\beta_k^{(2)} = \beta_k^{(2,1)}  + \beta_k^{(2,2)} ,
\end{equation}
where
\begin{multline*}
\beta_k^{(2,1)} := 
 \int \limits_{1/\ell}^1 \frac{1- \ell y}{y}  \int  \limits_{ (1 - \varepsilon)\theta \cdot \mathcal{R}^{k-1}  }   \int \limits_0^{1-y} \left( \partial_y^{(k)} \frac{\partial^k}{\partial t_1 \dots \partial t_k } 
f_1(t_1, \dots, t_k) +  2 \partial_y^{(k)} \frac{\partial^k}{\partial t_1 \dots \partial t_k } f_2(t_1, \dots, t_k) \right)  \\
\times \partial_y^{(k)} \frac{\partial^k}{\partial t_1 \dots \partial t_k } f_1(t_1, \dots, t_k)  \, dt_i \, dt_1 \dots    dt_{k-1}  \, dy  
\end{multline*}
and
\begin{multline*}
\beta_k^{(2,2)} := (1 - \ell ) \int \limits_{(1-\varepsilon)\theta \cdot \mathcal{R}_{k-1} } 
\left( \frac{\partial^{k-1}}{\partial t_1   \dots  \partial t_{k-1} }  
f_1(t_1, \dots, t_{k-1}, 0 ) + 2 \frac{\partial^{k-1}}{\partial t_1   \dots  \partial t_{k-1} } f_2(t_1, \dots, t_{k-1}, 0 ) \right)  \\
\times \frac{\partial^{k-1}}{\partial t_1   \dots  \partial t_{k-1} }f_1(t_1, \dots, t_{k-1}, 0 ) \, dt_1 \dots    dt_{k-1} .
\end{multline*}
Let $\delta_1 > 0$ be a sufficiently small fixed quantity. By a smooth
partitioning, we may assume without loss of generality that all of the $f_{i,j}$ are supported on intervals of length at most $\delta_1$, while keeping the sum
\[ \sum_{j=1}^J |c_j| | f_{1,j} (t_1)| \cdots |f_{k,j} (t_k)|  \]
bounded uniformly in $t_1, \dots , t_k$ and in $\delta_1$. Therefore, the supports of $f_1$ and $f_2$ overlap only on some set of measure at most $O (\delta_1 )$. Hence, we conclude that 
\begin{equation}
\beta_k := \beta_k^{(1)} + \beta_k^{(2)} = \, \widetilde{J}_{k, \varepsilon} (f) + \widetilde{Q}_{k, \varepsilon} (f)  + O(\delta_1),
\end{equation}
which implies
\begin{equation}\label{wynik_102}
 \sum_{ \substack{ n \sim x \\ n \equiv b \bmod W \\ \mathcal{P}(n) \textup{ sq-free} }}     \nu (n)
 \sum_{p | L_k (n) } \left( 1 - \ell \log_x p \right)  \leq \left( \beta_k + o(1) \right) B^{-k} \frac{x}{W}.
 \end{equation}
 A similar argument provides results analogous to (\ref{wynik_102}) for all remaining indices $1 \leq i \leq k-1$. If we set $\delta_1$ to be small enough, then the claim $DHL_\Omega [k; \varrho_k]$ follows from Lemma \ref{criterion} and (\ref{criterion_satisfied}). We also note that if $\varepsilon =0$, then (\ref{wynik_102}) becomes an equality, because in this case we have $\mathcal{J}_2 = \emptyset $.
\end{proof}


\section{Solving variational problems}

In this Section we focus on applying Theorems \ref{st_simplex_sieving}, \ref{ext_simplex_sieving}, and \ref{eps_simplex_sieving} to prove Theorem \ref{MAIN}. 

\subsection{Proof of Theorem \ref{1dim_sieving}}\label{PT1D}

\begin{proof}
This is a direct application of Theorem \ref{st_simplex_sieving}. We choose $F(t_1, \dots , t_k) = \bar{f} (t_1+\dots+t_k)$ for a function $\bar{f} \colon [0,+\infty) \rightarrow \mathbf{R}$ defined as
\begin{equation}
\bar{f} (x) := 
\begin{cases}
f(x), & \text{for } x \in [0,1] , \\
0, & \text{otherwise.}
\end{cases}
\end{equation}
We also set $\ell=1$, so the contribution from $J_i (F)$ vanishes for each possible choice of index $i$. 

First, we calculate $I(F)$. We substitute $t_1+\dots+t_k \mapsto t$ and leave $t_j$ the same for $j=2, \dots , k$. We get
\begin{equation}\label{calculating_if}
I(F)  =  \int \limits_0^1 f(t)^2 \left( \int \limits_{t \cdot \mathcal{R}_{k-1} } dt_2  \dots  dt_k \right) dt ~=~ \frac{1}{(k-1)!}\, \int \limits_0^1 f(t)^2 \, t^{k-1} dt ~=~ \bar{I} (f). 
\end{equation}

Let us move on to the $Q_i(F)$ integral. For the sake of convenience let us choose $i=k$. By the same substitution as before we arrive at
\begin{equation}\label{1dim_poczatek}
Q_k (F) = \int \limits_0^\frac{1}{\theta} \frac{1 - \theta y}{y} \int \limits_0^1 \left( \bar{f} (t) - \bar{f} (t+y) \right)^2
 \int \limits_{t \cdot \mathcal{R}_{k-1} } \mathbf{1}_{t_k \leq \frac{1}{\theta} - y} \, dt_2  \dots  dt_k \, dt \, dy.
\end{equation}
We wish to replace $\bar{f}$ with $f$ and discard the indicator function. The latter can be simply performed by calculating the inner integral. Note that it may be geometrically intepreted as a volume of a `bitten' simplex. We define
\[ H_{y,t} := \left\{ ( t_2, \dots , t_k ) \in \mathbf{R}^{k-1}  \colon t_2 + \dots + t_k \leq t ~~\text{and}~~ t_k > 1/\theta  - y 
\right\}. \]
Observe that $H_{y,t}$ is just a translated simplex $(t-1/\theta + y) \cdot \mathcal{R}_{k-1}$ for $1/\theta - t < y \leq 1/\theta$ and an empty set for $y \leq 1/\theta - t $ . Thus, we obtain
\begin{multline}
\frac{1}{(k-1)!}\int \limits_{t \cdot \mathcal{R}_{k-1} } \mathbf{1}_{t_k \leq \frac{1}{\theta} - y} \, dt_2  \dots  dt_k   \\
=  \text{Vol} ( t \cdot \mathcal{R}_{k-1} ) -  \text{Vol} ( H_{y,t} ) =
 \begin{cases}
t^{k-1}, & \text{for } y \in [0,\frac{1}{\theta}-t] , \\
 t^{k-1} - (t - 1/\theta + y)^{k-1}, & \text{for } y \in (\frac{1}{\theta}-t,\frac{1}{\theta}].
\end{cases} 
  \end{multline}
For $0\leq y \leq1$ we also have
\begin{equation}
 \bar{f} (t) - \bar{f} (t+y)  = 
 \begin{cases}
f(t)-f(t+y), & \text{for } t \in [0,1-y] , \\
f(t), & \text{for } t \in (1-y,1],
\end{cases} 
\end{equation}
and simply $ \bar{f} (t) - \bar{f} (t+y)  = \bar{f} (t)$ for greater $y$. We decompose the domain of integration 
\[ D := \{ (y,t) \in \mathbf{R}^2  \colon 0 < t < 1 ~\text{and}~ 0 < y < 1/\theta  \}\]
into
\begin{equation}\label{1dim_koniec}
 D = D_1 \cup D_2 \cup D_3 \cup D_4 \cup D_5 \cup \left( \text{some set of Lebesgue measure 0} \right) , 
 \end{equation}
where
\begin{align*}
D_1 &:= \{ (y,t) \in \mathbf{R}^2  \colon 0 < y < 1 ~\text{and}~ 0 < t < 1-y   \}, \\
D_2 &:= \{ (y,t) \in \mathbf{R}^2  \colon  0 < y < 1 ~\text{and}~1-y < t < 1 \},\\
D_3 &:= \{ (y,t) \in \mathbf{R}^2  \colon 1 < y < 1/\theta-1 ~\text{and}~ 0 < t < 1  \},\\
D_4 &:= \{ (y,t) \in \mathbf{R}^2  \colon 1/\theta -1 < y < 1/\theta ~\text{and}~ 0 < t < 1/\theta -y  \}, \\
D_5 &:= \{ (y,t) \in \mathbf{R}^2  \colon 1/\theta - 1 < y < 1/\theta ~\text{and}~ 1/\theta - y < t < 1   \}.
\end{align*}
Therefore, from (\ref{1dim_poczatek}--\ref{1dim_koniec}) we get
\begin{multline}\label{QF_wynik_st}
Q_k (F) =   \iint   \limits_{D_1}   \frac{1 - \theta y}{y} \, ( f(t) - f(t+y))^2\,  t^{k-1} \, dt \, dy ~   \\
+ \iint   \limits_{D_2 \cup D_3 \cup D_4}   \frac{1 - \theta y}{y} \, f(t)^2\,  t^{k-1} \, dt \, dy ~+~
\iint   \limits_{D_5}   \frac{1 - \theta y}{y} \, f(t)^2\, \left(  t^{k-1} - \left(t+y- 1/\theta  \right)^{k-1} \right) \, dt \, dy.
\end{multline}
The same reasoning applies to $Q_i (F)$ for $i =1, \dots , k-1$.



\subsection{Collapse of Theorem \ref{ext_simplex_sieving} into one dimension and bounds for $\Omega_k^{\textmd{ext}}$}\label{bounds_omegaextk}

We wish to transform Theorem \ref{ext_simplex_sieving} into its one-dimensional analogue in a similar manner as we did in 
Subsection \ref{PT1D}. For the sake of convenience, let us assume in this subsection that $k \geq 3$. In the $k=2$ case, Theorem \ref{ext_simplex_sieving} can be applied directly without any intermediate simplifications -- it also does not provide anything beyond what is already known anyway, as presented in Table D. 
We take 
\[F(t_1, \dots , t_k) = f(t_1 + \dots + t_k) \mathbf{1}_{(t_1,\dots,t_k) \in \mathcal{R}_{k}' },\]
where $f \colon [0,+\infty ) \rightarrow \mathbf{R}$ is some locally square-integrable function. We also put $\ell=1$, so the contribution from $J_i (F)$ vanishes for each possible choice of index $i$. Let us begin with $I(F)$ integral. This time we substitute
\begin{equation}\label{substitution}
 \begin{cases}
t_1+\dots + t_k & \longmapsto ~~x, \\
t_1+\dots + t_{k-1} & \longmapsto ~~t, \\
t_1 & \longmapsto ~~t_1, \\
&\vdots \\
t_{k-2} & \longmapsto ~~t_{k-2}.
\end{cases}
\end{equation}
We also relabel $t_{k-1}$ as $s$. It is calculated in \cite[Subsubsection `Calculating J']{Lewulis} that
\begin{equation}\label{j_ext}
I(F) = \int \limits_{\mathcal{R}_{k}'} F(t_1, \dots , t_k)^2 \, dt_1 \dots dt_k ~=~
\frac{1}{(k-3)!} \int \limits_0^1 \int \limits_0^t \int \limits_t^{1+\frac{s}{k-1}} f(x)^2 \, (t-s)^{k-3} \, dx \, ds \, dt.  
\end{equation} 

Let us focus on the $Q_i(F)$ integral. Again, for the sake of convenience we choose $i=k$. We have
\begin{equation}
Q_k (F) = \int \limits_0^\frac{1}{\theta} \frac{1 - \theta y}{y}
 \int \limits_{ \mathcal{R}_{k-1} } 
 \left(  \int \limits_0^{\rho (t_1 , \dots , t_{k-1} )} \left( \partial_y \bar{f} (t_1+\dots+t_k)  \right)^2 \mathbf{1}_{t_k \leq \frac{1}{\theta} - y} \, dt_k \right) \, dt_1  \dots  dt_{k-1} \,  dy,
\end{equation}
where 
\[ \rho(t_1, \dots , t_k) := \sup \{ t_k \in \mathbf{R} \colon (t_1, \dots, t_k) \in \mathcal{R}_k' \}. \]
We observe that any permutation of the variables $t_1, \dots , t_{k-1}$ does not change the integrand. We also notice that if we consider an extra assertion $0 < t_1 < \dots  < t_{k-1}$, then
\[ \rho ( t_1, \dots , t_{k-1} ) = 1 - t_2 -  \dots - t_{k-1}. \]
Therefore, $Q_k (F)$ equals
\begin{equation}\label{expression112}
 (k-1)! \, \int \limits_0^\frac{1}{\theta} \frac{1 - \theta y}{y}
 \int \limits_{ \substack{ \mathcal{R}_{k-1}  \\ 0 < t_1 < \dots < t_{k-1} }} 
 \left(  \int \limits_0^{  1 - t_2 -  \dots - t_{k-1} } \left( \partial_y \bar{f} (t_1+\dots+t_k)  \right)^2 \mathbf{1}_{t_k \leq \frac{1}{\theta} - y} \, dt_k \right) \, dt_1  \dots  dt_{k-1} \,  dy,
\end{equation}
In order to calculate the inner integral, we perform the same substitution as described (\ref{substitution}). This way we obtain
\begin{multline}\label{expression113}
 \int \limits_{ \substack{ \mathcal{R}_{k-1}  \\ 0 < t_1 < \dots < t_{k-1} }} 
 \left(  \int \limits_0^{  1 - t_2 -  \dots - t_{k-1} } \left( \partial_y \bar{f} (t_1+\dots+t_k)  \right)^2 \mathbf{1}_{t_k \leq \frac{1}{\theta} - y} \, dt_k \right) \, dt_1  \dots  dt_{k-1}  \\
=~ \int \limits_0^1   \int \limits_{ \substack{ 0 < t_1 < \dots < t_{k-2} < t - \sum_{i=1}^{k-2} t_i }} 
 \left(  \int \limits_t^{  1 +t_1 } \left( \partial_y \bar{f} (x)  \right)^2 \mathbf{1}_{x-t \leq \frac{1}{\theta} - y} \, dx \right) \, dt_1  \dots  dt_{k-2} \, dt 
 \end{multline}
For the sake of clarity, we relabel $t_1$ as $s$. Thus, the expression from (\ref{expression113}) equals
\begin{equation}\label{calka_kombinatoryczna}
\int \limits_0^1  \int \limits_0^{\frac{t}{k-1}}
 \left(  \int \limits_t^{  1 + s } \left( \partial_y \bar{f} (x)  \right)^2 \mathbf{1}_{x-t \leq \frac{1}{\theta} - y} \, dx \right) 
 \left( \int \limits_{s}^{\frac{t-s}{k-2}} \int \limits_{t_2}^{\frac{t-s-t_2}{k-3}} \cdots \int \limits_{t_{k-3}}^{\frac{t-s-t_2-\dots -t_{k-3}}{2}} \, dt_{k-2}  \dots  dt_2 \right)
   \, ds \, dt.
\end{equation}
If $k=3$, then the inner integral simplifies to $1$. For $0\leq s \leq t$ let us define
\[ \mathscr{L} (k;t,s) := 
 \int \limits_{s}^{\frac{t-s}{k-2}} \int \limits_{t_2}^{\frac{t-s-t_2}{k-3}} \cdots \int \limits_{t_{k-3}}^{\frac{t-s-t_2-\dots -t_{k-3}}{2}} \, dt_{k-2}  \dots  dt_2 .\]
We apply the induction over $k$ to show that
\begin{equation}\label{claim_K}\mathscr{L} (k;t,s) = \frac{(t - (k-1)s)^{k-3}}{(k-2)!(k-3)!}.
\end{equation}
Our claim is obviously true for $k=3$. For every $k\geq 3$ we observe the identity
\begin{equation}\label{identity_K}
\mathscr{L} (k+1 ; t,s ) = \int \limits_s^{\frac{t-s}{k-1}} \mathscr{L} (k ; t-s , u ) \, du. 
\end{equation}
To finish the proof of the claim one has to put (\ref{claim_K}) into (\ref{identity_K}) and substitute 
\[t-s-(k-1)u \mapsto z.\]
Combining (\ref{expression112}--\ref{calka_kombinatoryczna}) with the claim discussed above we conclude that $Q_k (F) $ equals
\begin{equation}
\frac{(k-1)!}{(k-2)!(k-3)!} \, \int \limits_0^\frac{1}{\theta} \frac{1 - \theta y}{y}
\int \limits_0^1  \int \limits_0^{\frac{t}{k-1}}
 \left(  \int \limits_t^{  1 + s } \left( \partial_y \bar{f} (x)  \right)^2 \mathbf{1}_{x-t \leq \frac{1}{\theta} - y} \, dx \right) 
(t - (k-1)s)^{k-3}    \, ds \, dt \,  dy.
\end{equation}
Let us relabel $s$ as $s/(k-1)$ to simplify the expression above. We arrive at
\begin{align}\label{exp118}
Q_k (F) &= \frac{1}{(k-3)!} \, \int \limits_0^\frac{1}{\theta} \frac{1 - \theta y}{y}
\int \limits_0^1  \int \limits_0^t
 \left(  \int \limits_t^{  1 + \frac{s}{k-1} } \left( \partial_y \bar{f} (x)  \right)^2 \mathbf{1}_{x-t \leq \frac{1}{\theta} - y} \, dx \right) 
(t - s)^{k-3}    \, ds \, dt \,  dy  \nonumber \\
&=  \frac{1}{(k-3)!} \int \limits_E  \frac{1 - \theta y}{y}  \left( \partial_y \bar{f} (x)  \right)^2  (t - s)^{k-3} \, dx \, dt \, ds \,  dy , 
\end{align}
where 
\[ E := \left\{ (y,s,t,x) \in \mathbf{R}^4  \colon 0 < y < \frac{1}{\theta},~ 0 < t < 1,~ 0<s< t,~ t<x<1+\frac{s}{k-1},~ x-t< \frac{1}{\theta} - y   \right\}. \]
We wish to drop the bar from $\bar{f}$. Hence, we decompose
\[ E = E_1 \cup E_2, \]
where 
\begin{align*}
E_1 &:= \left\{ (y,s,t,x) \in E  \colon x+y \leq 1 + \frac{s}{k-1}   \right\}, \\
E_2 &:= \left\{ (y,s,t,x) \in E   \colon  x+y > 1 + \frac{s}{k-1} \right\}.\\
\end{align*}
From (\ref{exp118}) we have that $Q_k(F)$ equals $1/(k-3)!$ times
\begin{equation}
 \int \limits_{E_1}  \frac{1 - \theta y}{y}  \left( f(x) - f (x+y)  \right)^2  (t - s)^{k-3} \, dx \, dt \, ds \,  dy ~+~
  \int \limits_{E_2}  \frac{1 - \theta y}{y}  f (x)^2  (t - s)^{k-3} \, dx \, dt \, ds \,  dy.
\end{equation}
 Now, we would like to convert two integrals above into a finite sum of integrals with explicitely given limits, just like in (\ref{QF_wynik_st}). If we choose the order of integration
 \[ y \rightarrow x \rightarrow t \rightarrow s, \]
 then we get
 \begin{align}
  \int \limits_{E_1}  \boxtimes ~&=~ \int \limits_0^1 \int \limits_s^1 \int \limits_t^{1 + \frac{s}{k-1} } ~ \int \limits_{0}^{1 + \frac{s}{k-1} -x}  \boxtimes ~ dy \, dx \, dt \, ds, \label{j0_ext1} \\
    \int \limits_{E_2}  \boxtimes ~&=~  \int \limits_0^1  \int \limits_s^1  \int \limits_t^{1+\frac{s}{k-1} }  \int \limits_{1+\frac{s}{k-1}-x }^{\frac{1}{\theta}+t-x}   \boxtimes ~ dy \, dx \, dt \, ds, \label{j0_ext2}
 \end{align}
 where $\boxtimes$ denotes an arbitrary integrable function.  
  \begin{remark} 
   From the computational point of view, the variable $y$ should be integrated in the last order, because it engages a non-polynomial function. The author found the following order of integration as the most computationally convenient: 
 \[ x \rightarrow t \rightarrow s \rightarrow y. \] 
   Unfortunately, in this case there is no decompsition of $E$ similar to (\ref{1dim_koniec}) that is common for all possible choices of $k$ and $\theta$. In the $k=4, \, \theta=1/2$ case, which accordingly to Tables C and D is the only one, where we can expect a qualitative improvement over Theorem \ref{1dim_sieving}, we are able to convert the integral over $E$ into 15 integrals with explicitely given limits. Such a conversion is a straightforward operation (quite complicated to perform without a computer program, though). We do not present the precise shape of these integrals here. 
\end{remark}
Let us set $k=4$, $\theta=1/2$, and
\[ f(x) = 12+63x+100x^2. \]
Combining (\ref{j_ext}) with (\ref{j0_ext1}--\ref{j0_ext2}), and performing the calculations on a computer, we get
\begin{align*}
I(F) = ~&\frac{2977019}{51030} > 58.3386047422. \\
Q_k (F) =~ &\frac{132461570733345 \log \frac{5}{3} - 997242435 \log 3 - 49178701703144  }{4629441600} \\
& + \frac{6144554}{105} \log \frac{6}{5} - \frac{15996989}{280} \, \text{arcoth} \, 4 < 70.0214943902.
\end{align*}
This combined with Theorem \ref{ext_simplex_sieving} gives
\begin{equation}
\Omega_4^{\text{ext}} \left( \theta = \frac{1}{2} \right) < 8.80105,
\end{equation}
which proves the $k=4$ case of the conditional part of Theorem \ref{MAIN}.

\subsection{bounds for $\Omega_{k,\varepsilon}$} 
We apply Theorem \ref{eps_simplex_sieving} with $\eta = 1+ \varepsilon$ and some $\varepsilon$, $\ell$ satisfying
\[ 2 \theta (1 + \varepsilon ) + \frac{1}{\ell} = 1. \] 
 We choose $F(t_1, \dots , t_k) = \bar{f} (t_1+\dots+t_k)$ for a function $\bar{f} \colon [0,+\infty) \rightarrow \mathbf{R}$ satisfying
\begin{equation}
\bar{f} (x) := 
\begin{cases}
f(x), & \text{for } x \in [0,1+\varepsilon] , \\
0, & \text{otherwise.}
\end{cases}
\end{equation}
First, we calculate $I(F)$. We proceed just like in (\ref{calculating_if}) and get
\begin{equation}
I(F) \, = \ \int \limits_0^{1+\varepsilon} f(t)^2 \left( \, \int \limits_{ t \cdot \mathcal{R}_{k-1} } dt_2  \dots  dt_k \right) dt ~=~ \frac{1}{(k-1)!}\, \int \limits_0^{1+\varepsilon} f(t)^2 \, t^{k-1} \, dt. 
\end{equation}
Next, let us consider $J_{i,\varepsilon} (F)$. As before, let us put $i=k$. We have
\begin{equation}
J_{k,\varepsilon} (F) \, = \, \int \limits_{(1-\varepsilon ) \cdot \mathcal{R}_{k-1} } \left(  \int \limits_0^{1+\varepsilon - t_1 - \dots - t_{k-1} }  f (t_1 + \dots + t_k)  \, dt_k \right)^2   dt_1 \dots dt_{k-1}. 
\end{equation}
We perform the same substitution as in (\ref{calculating_if}). We get that $J_{k,\varepsilon} (F)$ equals
\begin{equation}
\begin{gathered}
\, \int \limits_0^{1-\varepsilon} \left(  \int \limits_0^{1+\varepsilon - t }  f (t + t_k)  \, dt_k \right)^2   \int \limits_{t \cdot \mathcal{R}_{k-1} } \, dt_1 \dots dt_{k-2} \, dt \, = \,  
\int \limits_0^{1-\varepsilon}  \left(  \int \limits_t^{1+\varepsilon  }  f (x)  \, dx \right)^2  \frac{t^{k-2}}{(k-2)!} \, dt.
\end{gathered}
\end{equation}
We perform analogous calculations for $i=1, \dots , k-1$. 

Let us move to $Q_{i,\varepsilon} (F)$. Put
\begin{equation}
 \begin{cases}
t_1+\dots + t_{k-1} & \longmapsto ~~t, \\
t_2 & \longmapsto ~~t_2, \\
&\vdots \\
t_k & \longmapsto ~~t_k.
\end{cases}
\end{equation} 
and split
\begin{equation}
Q_{k,\varepsilon}(F) = Q_{(1)}(f) + Q_{(2)}(f),
\end{equation}
where
\begin{align}
 Q_{(1)} (f) &:= \frac{1}{(k-2)!}  \int \limits_0^\frac{1}{\ell \theta}  \frac{1- \ell \theta y}{y}   \int \limits_0^{1+\varepsilon} \left(  \, \int \limits_0^{\frac{1}{\theta } - y} \left(  \bar{f} (t + t_k) - \bar{f} (t + t_k+y) \right)^2 \, dt_k \right) 
t^{k-2} \, dt \, dy, \nonumber \\
Q_{(2)} (f) &:= \frac{1}{(k-2)!}  \int \limits_{\frac{1}{\ell \theta}}^{\frac{1}{\theta}}   \frac{1- \ell \theta y}{y}   \int \limits_0^{1-\varepsilon} \left(  \, \int \limits_0^{\frac{1}{\theta } - y} \left(  \bar{f} (t + t_k) - \bar{f} (t + t_k+y) \right)^2 \, dt_k \right) 
t^{k-2} \, dt \, dy,
\end{align}
Therefore, we put $t_k+t \mapsto x$ and decompose
\begin{multline}
(k-2)!  \left( Q_{(1)} (f) + Q_{(2)}(f) \right) = \\[1ex]
\int \limits_{H_1 \cup H_3}  \frac{1- \ell \theta y}{y}  f(x)^2  \,
t^{k-2} \, dx \, dt \, dy \, + \,
 \int \limits_{H_2 \cup H_4}  \frac{1- \ell \theta y}{y}  \left(  f (x) - f (x+y) \right)^2  
t^{k-2} \, dx \, dt \, dy,  
\end{multline}
where
\[ H := \{ (y,t,x) \in \mathbf{R}^3 \colon 0<y<1/ \theta ,~ 0<t<x<1+\varepsilon ,~ x-t<1/\theta - y   \},\]
and
\begin{align*}
H_1 &:= \{ (y,t,x) \in H  \colon 0<y \leq 1/ (\ell \theta)  ~\text{and}~  x+y < 1+\varepsilon   \}, \\
H_2 &:= \{ (y,t,x) \in H  \colon 0<y \leq 1/ (\ell \theta)  ~\text{and}~  x+y > 1+\varepsilon   \}, \\
H_3 &:= \{ (y,t,x) \in H  \colon 1/ (\ell \theta)<y<1/\theta  ~\text{and}~ 0<t<1-\varepsilon ~\text{and}~  x+y < 1+\varepsilon   \}, \\
H_4 &:= \{ (y,t,x) \in H  \colon 1/ (\ell \theta)<y<1/\theta  ~\text{and}~ 0<t<1-\varepsilon ~\text{and}~  x+y > 1+\varepsilon   \}, 
\end{align*}
Unfortunately, with varying $k$, $\varepsilon$, $\theta$ there is no uniform way to decompose $H_1, \dots ,H_4$ further into integrals with explicitely given limits. In the unconditional setting, namely with $\theta = 1/4$ fixed, every choice of parameters described in Table E provides less than 10 different integrals to calculate. For these choices we present close to optimal polynomials minimalizing the $\Omega_{k,\varepsilon}$ functional. 

\begin{center} 
\centering
\text{Table G. Upper bounds for $\Omega_{k,\varepsilon}$.}
\vspace{1mm}
\\
\renewcommand{\arraystretch}{1}
  \begin{tabular}{ | D || D|  I |  E | @{}m{0cm}@{}}\hline
     $k$ & $\varepsilon$ &  $f(1+\varepsilon-x)$ & bounds for $\Omega_k $  &  \rule{0pt}{3ex}  \\ \hline 
    $2$ & 1/3 & $ 1+ 5x + 3x^2$ & 4.6997 & \\
    $3$ & 1/4 & $ 1 + 7 x + 10x^2 $ &  7.7584   &  \\ 
    $4$ & 1/5 & $ 1 + 7 x + 19x^2 $ & 11.0533  & \\ 
    $5$ & 1/6 & $1 + 7 x + 33x^2$  & 14.5415  &  \\ 
    $6$ & 1/7 & $1+7x+51x^2$  & 18.1907  &  \\ 
    $7$ & 1/9 & $1+8x+70x^2$ &  21.9939  &  \\
    $8$ & 1/10 & $1+8x+102x^2 $ & 25.9038  &  \\ 
    $9$ & 1/10 & $1+5x+132x^2 $ & 29.9059   &  \\ 
    $10$ & 2/21 & $1+35x+ 30x^2 + 470x^3 $ &   33.9384        &  \\
      \hline
  \end{tabular}
\end{center}
These bounds are sufficient to prove the unconditional part of Theorem \ref{MAIN}.

\end{proof}


\bibliographystyle{amsplain}


\end{document}